\newtheorem{lemma}[subsection]{Lemma}
\newtheorem{thm}[subsection]{Theorem}
\newtheorem{prop}[subsection]{Proposition}
\newtheorem{rem}[subsection]{Remark}
\newtheorem{coro}[subsection]{Corollary}
\newcommand{\ra}{\rightarrow}
\newcommand{\mo}{\mathcal{O}}
\newcommand{\mf}{\mathcal{F}}
\newcommand{\cd}{\mathcal{D}}
\newcommand{\mg}{\mathcal{G}}
\newcommand{\ma}{\mathcal{A}}
\newcommand{\mb}{\mathcal{B}}
\newcommand{\me}{\mathcal{E}}
\newcommand{\mi}{\mathcal{I}}
\newcommand{\mt}{\mathcal{T}}
\newcommand{\mh}{\mathcal{H}}
\newcommand{\mk}{\mathcal{K}}
\newcommand{\mm}{\mathcal{M}}
\newcommand{\mv}{\mathcal{V}}
\newcommand{\tb}{\mathtt{B}}
\newcommand{\mr}{\mathcal{R}}
\newcommand{\mq}{\mathcal{Q}}
\newcommand{\rd}{\mathscr{D}}
\newcommand{\z}{\Theta}
\newcommand{\dzi}{D_{\Theta_d}^{int}}
\newcommand{\dzo}{D_{\Theta_d}^{o}}
\newcommand{\md}{M(d,0)}
\newcommand{\mdi}{M(d,0)^{int}}
\newcommand{\wrn}{W(r,0,n)}
\newcommand{\lcd}{\lambda_{c^r_n}(d)}
\newcommand{\lrd}{\lambda_r(d)}
\newcommand{\crn}{c^r_n}
\newcommand{\Ext}{\operatorname{Ext}}
\newcommand{\Pic}{\operatorname{Pic}}
\newcommand{\Tor}{\operatorname{Tor}}
\newcommand{\ls}{|dH|}
\newcommand{\p}{\mathbb{P}}
\newcommand{\bc}{\mathbb{C}}
\begin{document}
\fontsize{12pt}{14pt} \textwidth=14cm \textheight=21 cm
\numberwithin{equation}{section}
\title{Moduli spaces of 1-dimensional semi-stable sheaves and Strange duality on $\mathbb{P}^2$.}
\author{Yao YUAN}
\date{}
\maketitle
\begin{flushleft}{\textbf{Abstract.}} We study Le Potier's strange duality conjecture on $\mathbb{P}^2$.   We show the conjecture is true for the pair ($W(2,0,2),~M(d,0)$) with $d>0$,  where $W(2,0,2)$ is the moduli space of semistable sheaves of rank 2, zero first Chern class and second Chern class 2,  and $M(d,0)$ is the moduli space of 1-dimensional semistable sheaves of first Chern class $dH$ and Euler characteristic 0.     

\end{flushleft}

\section{Introduction.}
The strange duality conjecture is a very interesting and famous problem in the theory of moduli spaces of sheaves.  It was first formulated for moduli spaces of vector bundles on curves by Beauville, Donagi and Tu in 1990s (\cite{Bea},\cite{DT}).  Two groups of people proved this conjecture around 2007 (\cite{pbo},\cite{pbt} and \cite{moone}).  Those are very remarkable works.    

So far, there is no general extension of strange duality conjecture to moduli spaces of sheaves over surfaces.  But under some conditions, this conjecture can be formulated, such as Le Potier's formulation on the projective plan (see \cite{nila}) and Marian-Oprea's formulation for K3 and Abelian surfaces (see \cite{motwo}).  In this article, we study the former. 

Let us briefly review the set-up for strange duality conjecture.  More details can be found in \cite{nila} and \cite{motwo}.  

Let $X$ be any smooth projective scheme.  Let $u$ and $c$ be two elements in the Grothendieck group $K(X)$ of coherent sheaves on $X$, assume moreover $u$ is orthogonal to $c$ with respect to the Euler characteristic, i.e. the flat tensor $F_u\otimes^{L} F_c$ is of Euler characteristic zero for any $F_u$ ($F_c$, resp.) a sheaf in class $u$ ($c$, resp.).  Denote by $M_u$ ($M_c$, resp.) the moduli space of semistable sheaves of class $u$ ($c$, resp.), then there is a well-defined determinant line bundle $\lambda_u(c)$ ($\lambda_c(u)$, resp.) associated to $c$ ($u$, resp.) on $M_u$ ($M_c$, resp.).  Actually if there are strictly semistable sheaves, we will require a slightly stronger condition to define $\lambda_c(u)$ and $\lambda_u(c)$.  We refer to Section 2 in \cite{yuan} or Chapter 8 in \cite{hl} for the explicit definition of this ``determinant line bundle".  Notice that the definition in \cite{hl} is dual to what we use in this paper.

The locus $\mathscr{D}:=\{(F_u,F_c)\in M_u\times M_c|H^0(F_u\otimes F_c)\neq 0\}$ is closed in $M_u\times M_c$.  If $\mathscr{D}$ is a divisor of the line bundle $\lambda_u(c)\boxtimes\lambda_c(u)$ (not always the case on surfaces), then the section induced by $\mathscr{D}$ defines the following \emph{strange duality map} up to scalars.
\[SD_{c,u}:H^0(M_c,\lambda_c(u))^{\vee}\ra H^0(M_u,\lambda_u(c)).\]
Strange duality conjecture (for the pair $(u,c)$) says $SD_{c,u}$ is an isomorphism.

In the Le Potier's version of strange duality, $X=\p^2$ with the hyperplane class $H$ and $x\in X$ a single point, $u=u_d:=[\mo_X]-[\mo_X(-dH)]+\frac{d(d-3)}2[\mo_x]$ which is the class of 1-dimensional sheaves with first Chern class $dH$ and Euler characteristic 0, and $c=c^r_n:=r[\mo_{\p^2}]-n[\mo_x]$ which is the class of rank $r$ sheaves of first Chern class zero and second Chern class $n$.  We denote by $[F]$ the class of a sheaf $F$ in $K(X)$. 

Very little is known in general about this conjecture for surfaces, even at numerical level, i.e. whether we have $h^0(M_u,\lambda_u(c))=h^0(M_c,\lambda_c(u))?$  There are some results for special cases, for instance Danila proves that Le Potier's strange duality holds for $(u_d,c^2_n)$ with small $n$ and $d=1,2,3$ (see \cite{nilala} and \cite{nila});  Abe shows that it holds for $(u_d,c^2_n)$ with all $n$ and $d=1,2$ (see \cite{abe});  the author shows that it holds for $(u_d,c^1_n)$ for all $n$ and $d$ (see Section 4.3 in \cite{yuan}); and also a part of work in \cite{GY}.  Marian and Oprea build a version of strange duality and prove that it holds in a large number of cases for generic K3 and abelian surfaces (see \cite{mothree} and \cite{mofour}).

In this article the surface is always $\p^2$ and we study the strange duality for the pair $(u_d,c^2_2),d>0$.  We denote $W(2,0,2):=M_{c^2_2},~\lambda_2(d):=\lambda_{c^2_2}(u_d),~M(d,0):=M_{u_d},~\z^2_d(2):=\lambda_{u_d}(c^2_2)$ and prove the following theorem.
\begin{thm}[Theorem \ref{sd}]\label{begin}The Le Potier's strange duality conjecture is true for the pair $(u_d,c^2_2),d>0$, i.e. the strange duality map
\[SD:=SD_{c^2_2,u_d}:H^0(W(2,0,2),\lambda_2(d))^{\vee}\ra H^0(\md,\z_d^2(2))\]
is an isomorphism.
\end{thm}

The proof of Theorem \ref{begin} is quite tricky: a priori we don't have any numerical evidence of the conjecture at this case, and in fact we still don't know how to compute directly $h^0(M(d,0),\z^2_d(2))$.  But $h^0(W(2,0,2),\lambda_2(d))$ is relatively easy to compute, and we get the injectivity of the map $SD$ by using Fourier transform.  Then we only need to show that $h^0(M(d,0),\z^2_d(2))\leq h^0(W(2,0,2),\lambda_2(d)).$ 

In Section 2 we introduce some notations and also some basic properties of the moduli space $M(d,0)$.  In Section 3 we show the injectivity of $SD$ and finally in Section 4 we show the surjectivity.  At the end there is an appendix where we give more properties of the Fourier transform for future use.


\textbf{Acknowledgements.}  I was supported by NSFC grant 11301292.  I thank E. Looijenga for some helpful discussions.  I also thank the referee for careful reading and helpful comments.

\section{Notations and Preliminaries.}
\begin{enumerate}
\item We are always on $\p^2$ and the base field is $\mathbb{C}$.  $H$ is the hyperplane class in $\p^2$.  $\ls$ is the linear system of the divisor class $dH$.
\item Let $\crn:=r[\mo_{\p^2}]-n[\mo_x]\in K(X)$ with $x$ a single point on $\p^2$ be the class of sheaves with rank $r$, trivial determinant and second Chern class $n$.  
\item Let $u_d:=[\mo_{\p^2}]-[\mo_{\p^2}(-dH)]+\frac{d(d-3)}2[\mo_x],~d>0$ be the class of 1-dimensional sheaves supported at curves in $\ls$ and with Euler characteristic 0.
\item Let $\wrn$ be the moduli space of semi-stable sheaves of class $\crn$, i.e. $\wrn=M_{c^r_n}$.  $\wrn$ is a good quotient of a smooth quasi-projective variety, hence it is normal and Cohen-Macaulay.  $\wrn$ is irreducible (Theorem D in \cite{DLP})  
\item Let $\md$ be the moduli space of semi-stable sheaves of class $u_d$, i.e. $\md=M_{u_d}$.  $\md$ is a good quotient of a smooth quasi-projective variety, hence it is normal and Cohen-Macaulay.  $\md$ is irreducible (Theorem 3.1 in \cite{lee}).  
\item We denote by $\z_d$ the determinant line bundle associated to $[\mo_{\p^2}]$ on $\md$.  Since $dim~H^0(\z_d)=1$ (see \cite{nila} or Theorem 4.3.1 in \cite{yuan}), the line bundle $\z_d$ defines a unique divisor $D_{\z_d}$ which consists of sheaves with non trivial global sections.
   
\item By Proposition 2.8 in \cite{le}, $\z_d^r(n):=\z_d^r\otimes\pi^{*}\mo_{\ls}(n)$ is the determinant line bundle $\lambda_{u_d}(\crn)$ associated to $\crn$ on $\md$ for any $r\geq1$, $n\geq0$, where $\pi:\md\ra\ls$ sends every sheaf to its support.  

\item By Theorem 4.3.1 in \cite{yuan}, $\pi_{*}\z_d\cong\mo_{\ls}$ for any $d>0$.  By Theorem 3.5 in \cite{lee}, for $d=1,2$ $\pi$ is an isomorphism and $\forall r,~\pi_{*}\z_d^r\cong\mo_{\ls}$.  
\item Let $\lcd$ be the determinant line bundle associated to $u_d$ on $\wrn$.  Denote it simply by $\lrd$ if $r=n$.  $\lambda_2(d)=\lambda_u$ in Theorem \ref{begin}.
\end{enumerate}

\section{Injectivity of the strange duality map.}
In this section we prove the following proposition.
\begin{prop}\label{inje}The strange duality map
\begin{equation}\label{sdm}SD:H^0(W(2,0,2),\lambda_2(d))^{\vee}\ra H^0(\md,\z_d^2(2))\end{equation}
is injective for all $d>0$.
\end{prop}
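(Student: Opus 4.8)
The plan is to reinterpret the injectivity of $SD$ as a statement that the theta sections coming from $\md$ span $H^0(W(2,0,2),\lambda_2(d))$, then reduce this to products of ``point'' theta sections by using reducible semistable sheaves, and finally read everything off the classical description of $W(2,0,2)$ as the space of conics in the dual plane.

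\textbf{Step 1: reformulation.} Let $\mathscr{D}\subset W(2,0,2)\times\md$ be the jumping locus; using the determinant line bundle description recalled in Section 2 it is a divisor in $|\lambda_2(d)\boxtimes\z_d^2(2)|$, and for $F\in\md$ write $\theta_F:=\mathscr{D}|_{W(2,0,2)\times\{F\}}\in H^0(W(2,0,2),\lambda_2(d))$ for the associated section, i.e.\ the theta divisor of $F$. From the K\"unneth decomposition of $\mathscr{D}$ and the definition of $SD$, the kernel of $SD$ is exactly the annihilator of $\mathrm{span}\{\theta_F:F\in\md\}$. Hence $SD$ is injective if and only if the sections $\theta_F$, $F\in\md$, span $H^0(W(2,0,2),\lambda_2(d))$, and it is enough to exhibit, for each $d>0$, a family of sheaves in $\md$ realizing this.

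\textbf{Step 2: reducible sheaves.} For $d$ distinct lines $\ell_1,\dots,\ell_d\subset\p^2$, the sheaf $F=\bigoplus_{i=1}^{d}\mo_{\ell_i}(-1)$ is (properly) Gieseker-semistable with first Chern class $dH$ and Euler characteristic $0$, hence defines a point of $\md$. Since the determinant line bundle is additive in the $K$-class and $u_d=d\,u_1$ with $u_1=[\mo_{\ell}(-1)]$, one has $\lambda_2(d)=\lambda_2(1)^{\otimes d}$; moreover $H^0(E\otimes F)=\bigoplus_i H^0(E\otimes\mo_{\ell_i}(-1))$, so the theta divisor of $F$ is the sum of the $\theta_{\mo_{\ell_i}(-1)}$, that is
\[\theta_F=\prod_{i=1}^{d}\theta_{\mo_{\ell_i}(-1)}\in H^0\!\bigl(W(2,0,2),\lambda_2(1)^{\otimes d}\bigr).\]
Therefore $\mathrm{span}\{\theta_F:F\in\md\}$ contains the image of the multiplication map $\mathrm{Sym}^{d}V\to H^0(W(2,0,2),\lambda_2(d))$, where $V:=\mathrm{span}\{\theta_{\mo_\ell(-1)}:\ell\in(\p^2)^{\vee}\}\subseteq H^0(W(2,0,2),\lambda_2(1))$. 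It thus suffices to show $V=H^0(W(2,0,2),\lambda_2(1))$ and that this multiplication map is surjective.

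\textbf{Step 3: geometry of $W(2,0,2)$ and conclusion.} Here I would invoke the classical identification of $W(2,0,2)$ — via the Beilinson monad $E=\mathrm{coker}\bigl(\mo(-1)^{2}\to\Omega^1_{\p^2}(1)^{2}\bigr)$, equivalently via the associated Kronecker module $M\in\mathrm{Mat}_2\bigl(H^0(\Omega^1_{\p^2}(2))\bigr)$ and its determinant $\det M$ — with $\p^{5}=|\,\mo_{(\p^2)^{\vee}}(2)\,|$, the space of conics in the dual plane, under which $\lambda_2(1)$ corresponds to $\mo_{\p^5}(1)$; this is the ``Fourier transform'' ingredient. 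Then $H^0(W(2,0,2),\lambda_2(d))=H^0(\p^5,\mo(d))$ and the multiplication map of Step 2 is the tautological surjection $\mathrm{Sym}^d H^0(\p^5,\mo(1))\twoheadrightarrow H^0(\p^5,\mo(d))$. For a line $\ell$ one computes $h^0(E\otimes\mo_\ell(-1))=h^0(E|_\ell(-1))\neq 0$ exactly when $\ell$ is a jumping line of $E$, so $\theta_{\mo_\ell(-1)}$ is the hyperplane of conics passing through $[\ell]\in(\p^2)^{\vee}$; since six general points impose independent conditions on the $\p^5$ of conics, these hyperplanes span $H^0(\p^5,\mo(1))$. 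This gives $V=H^0(W(2,0,2),\lambda_2(1))$, and combining with Steps 1--2 proves Proposition \ref{inje}.

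\textbf{Expected main obstacle.} The technical heart is Step 3: one must control the identification of $W(2,0,2)$ together with the polarization $\lambda_2(1)$ precisely enough to know $H^0(\lambda_2(d))$ and the multiplication maps (a birational morphism $W(2,0,2)\to\p^5$ plus normality would already suffice), and one must determine the theta divisor $\theta_{\mo_\ell(-1)}$ over all of $W(2,0,2)$, including its non-locally-free and strictly semistable points. One also has to make sure that $\mathscr{D}$ is genuinely a divisor in $\lambda_2(d)\boxtimes\z_d^2(2)$ and not all of $W(2,0,2)\times\md$, and that the additivity $\lambda_2(d)=\lambda_2(1)^{\otimes d}$ survives the presence of strictly semistable sheaves; these are the places where the determinant-line-bundle formalism of \cite{yuan}, \cite{hl} and \cite{le} has to be applied with care.
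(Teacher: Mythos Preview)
Your proposal is correct and is essentially the paper's own argument: both identify $W(2,0,2)\cong\p^5$ with $\lambda_2(d)\cong\mo_{\p^5}(d)$ via the Fourier transform (the paper's Theorem~\ref{lp} combined with $M(2,0)\cong|2H|$ and Corollary~\ref{com}), and then span $H^0(\p^5,\mo(d))$ by products of point--evaluation hyperplanes. The only difference is bookkeeping---the paper first proves the dual map $D$ is injective (Proposition~\ref{din}) using test sheaves $\bigoplus_k I_{x_k}\in W(d,0,d)$ and then transfers back via Fourier, while your test sheaves $\bigoplus_i\mo_{\ell_i}(-1)\in\md$ are precisely the Fourier transforms of those, so the two computations coincide.
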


We first recall the Fourier transform on $\p^2$ (see also Section 4 in \cite{lee} or \cite{maru}).  Let $\mathcal{D}$ be the universal curve in $\p^2\times |H|$ as follows.
\begin{equation}\label{original}\xymatrix@C=0.01cm{
  \p^2\times |H|~~~~~\supset&\mathcal{D}\ar[d]^q\ar[r]^p
                & \p^2 \\
               &~~~~~~ |H|\cong\p^2&
               }.
\end{equation}

Let $F$ be a pure 1-dimensional sheaf with Euler characteristic 0, then its Fourier transform is defined to be $G_F:=q_{*}(p^{*}(F\otimes\mo_{\p^2}(2)))\otimes\mo_{|H|}(-1)$.  Let $G$ be a torsion-free sheaf on $|H|$ with first Chern class 0 and Euler characteristic 0, then its Fourier transform is defined to be $F_G:=R^1p_{*}(q^{*}(G\otimes\mo_{|H|}(-1)))\otimes\mo_{\p^2}(-1)$.  We can identify $|H|$ with $\p^2$.  Then these two Fourier transforms in general need not be the inverse to each other, but they provide a birational map as follows.
\begin{equation}\label{bcor}\Phi:\md \dashrightarrow W(d,0,d).\end{equation}

We have the following theorem due to Le Potier (see Lemma 4.2 and Corollary 4.3 in \cite{lee})  
\begin{thm}[Le Potier]\label{lp} $\Phi$ is defined over the complement of the divisor $D_{\z_d}\subset\md$ and induces an isomorphism onto the open subset in $W(d,0,d)$ corresponding to polystable sheaves whose restriction on a generic line $\p^1\cong l\in|H|$ is isomorphic to $\mo_l^{\oplus d}$.  In particular, $\Phi$ is an isomorphism for $d=1,2$.
\end{thm}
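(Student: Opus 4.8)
The plan is to realise $\Phi$ and the reverse assignment $G\mapsto F_G$ (call it $\Psi$) as a pair of relative Fourier--Mukai transforms attached to the incidence variety $\mathcal{D}\subset\p^2\times|H|$, and to prove that, after the specific twists by $\mo(2)$ and $\mo(-1)$ appearing in the definitions, they are mutually inverse on the loci where the relevant higher direct images are concentrated in a single degree. Concretely, I would compute the convolution over $|H|$ of the two integral kernels, obtaining a kernel on $\p^2\times\p^2$; using the relative form of Beilinson's resolution of the diagonal (equivalently, the Koszul resolution of $\mo_{\mathcal{D}}$), this convolution equals $\mo_{\Delta}$ up to a shift, the twists $\mo(2),\mo(-1)$ being calibrated exactly so that the off-diagonal correction terms die. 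This yields $\Psi\circ\Phi\cong\mathrm{id}$ and $\Phi\circ\Psi\cong\mathrm{id}$ at the level of derived functors, from which the birational correspondence \eqref{bcor} follows formally; the remaining work is to pin down the open loci on which both sides are honest semistable sheaves and the transforms are actual morphisms of moduli spaces.

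For the locus of definition I would argue by cohomology and base change along $q$. If $F\in\md$ lies off $D_{\z_d}$ then $H^0(\p^2,F)=0$, and since $\chi(F)=0$ and $F$ is supported on a curve we also get $H^1(F)=H^2(F)=0$; this acyclicity is what forces $R^1q_\ast(p^\ast(F\otimes\mo(2)))$ to vanish and $q_\ast$ to be locally free. A fibrewise computation over a line $l$ pins down the rank: from $0\to F(-1)\to F\to F|_l\to 0$ one gets $\chi(F|_l)=d$, so $(F\otimes\mo(2))|_l$ is a length-$d$ torsion sheaf with vanishing $H^1$ (the twist does not change the length), whence $G_F$ is locally free of rank $d$; a Grothendieck--Riemann--Roch computation along $q$ then gives $c_1(G_F)=0$, $\chi(G_F)=0$, i.e. $G_F\in W(d,0,d)$. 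The reverse transform admits the dual description via $p$: over a point $x$ the fibre of $R^1p_\ast$ reads $H^1(x^\ast,G|_{x^\ast}(-1))$, where $x^\ast\subset|H|$ is the pencil of lines through $x$; this is nonzero exactly along the curve of jumping lines of $G$, so $F_G$ is a genuine $1$-dimensional sheaf precisely when $G$ has balanced generic restriction $\mo_l^{\oplus d}$, and acquires positive generic rank (hence leaves $\md$) otherwise. This makes the description of the image transparent.

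The main obstacle is the preservation of semistability, i.e. that $F$ (semi)stable forces $G_F$ (semi)stable and conversely. Here I would take a hypothetical destabilizing subsheaf $G'\subset G_F$, verify that $G'$ itself satisfies the single-degree (WIT) condition so that $\Psi$ is exact on $0\to G'\to G_F\to G''\to 0$, and transport it back to a subsheaf or quotient of $F$; since the transform interchanges the rank with the support-degree and matches the normalized Euler characteristics, the inequality defining instability of $G_F$ becomes one contradicting (semi)stability of $F$. Controlling the WIT condition for the sub-object and matching reduced Hilbert polynomials across the transform is the delicate technical heart of the argument; granting it, $\Phi$ and $\Psi$ are mutually inverse morphisms on the two open loci, hence $\Phi$ is an isomorphism onto the stated open subset.

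Finally, for $d=1,2$ both restrictions in the statement disappear. First, $D_{\z_d}=\varnothing$: a nonzero global section of a semistable $F$ with $\chi(F)=0$ would produce a subsheaf $\mo_C\subset F$ with $C\subseteq\mathrm{Supp}(F)$ of degree $e\le d$, and $\chi(\mo_C)=e(3-e)/2>0$ for $e\le 2$ violates semistability, so $\Phi$ is defined on all of $\md$. Second, by the Grauert--M\"ulich theorem the generic splitting type $(a_1\ge\cdots\ge a_d)$ of a $\mu$-semistable sheaf with $c_1=0$ satisfies $a_i-a_{i+1}\le 1$, and together with $\sum a_i=0$ this forces $a_i\equiv 0$ when $d\le 2$ (whereas for $d\ge 3$ types such as $(1,0,\dots,0,-1)$ are permitted); hence every semistable $G\in W(d,0,d)$ has balanced generic restriction and lies in the image. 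Combining the two, $\Phi$ is an isomorphism $\md\xrightarrow{\sim}W(d,0,d)$ for $d=1,2$.
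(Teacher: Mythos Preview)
The paper does not supply its own proof of this theorem: it is stated as a result of Le Potier, with a citation to Lemma~4.2 and Corollary~4.3 in \cite{lee}. So there is no argument in the present paper to compare your proposal against; your sketch goes well beyond what the paper itself does.

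As an independent outline your strategy is the standard one and is broadly sound: realise $\Phi$ and $\Psi$ as integral transforms with kernel $\mo_{\mathcal{D}}$ (suitably twisted), compose the kernels, and use the Koszul/Beilinson description of $\mo_{\mathcal{D}}$ to identify the convolution with $\mo_{\Delta}$ up to shift. Two remarks on the execution. First, in your locus-of-definition paragraph you write that $H^0(F)=0$ together with $\chi(F)=0$ gives $H^1(F)=H^2(F)=0$, and that ``this acyclicity is what forces $R^1q_\ast(p^\ast(F(2)))$ to vanish''. The vanishing of global $H^i(\p^2,F)$ is not the relevant input for $R^1q_\ast$: what you need is fibrewise vanishing $H^1(l,F(2)|_l)=0$, which you in fact argue separately in the next sentence via the restriction sequence. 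The role of the hypothesis $H^0(F)=0$ is rather to guarantee that the inverse transform recovers $F$ (equivalently, that the unit of the adjunction is an isomorphism), not to control $R^1q_\ast$ directly. Second, and more substantially, you explicitly flag the preservation of (semi)stability under $\Phi$ and $\Psi$ as ``the delicate technical heart'' and then grant it. This is indeed where the work lies in Le Potier's argument; without it your sketch is an outline rather than a proof.

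Your treatment of the special cases $d=1,2$ is correct: the section argument shows $D_{\z_d}=\varnothing$, and Grauert--M\"ulich together with $\sum a_i=0$ forces the generic splitting type to be trivial when $d\le 2$, so both the source restriction and the target restriction disappear and $\Phi$ is a global isomorphism.
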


Let $U(d,0):=\md\setminus D_{\z_d}$.  Then $\Phi$ is well-defined over $U(d,0)$.  Let $V(d,0,d):=\Phi(U(d,0))$.  Then we have $U(d,0)\xlongrightarrow[\cong]{\Phi} V(d,0,d)$.  Fourier transform also behaves well in flat families as in the following lemma.  

We say a sheaf $\mf\in M(d,0)$ ($W(r,0,n)$, $U(d,0)$, $V(d,0,d)$, resp.) if the S-equivalence class of $\mf$ is in $M(d,0)$ ($W(r,0,n)$, $U(d,0)$, $V(d,0,d)$, resp.).
\begin{lemma}\label{ftinf}Let $S$ be a Noetherian scheme.  $\tau:S\times \p^2\ra\p^2$ is the projection. 

(1) Let $\mf$ be a sheaf over $S\times \p^2$ which is a $S$-flat family of pure sheaves of class $u_d$ such that $H^1(\mf_s\otimes\mo_{\p^2}(2))=0$ for all $s\in S$.  Then the Fourier transform $$\mg_{\mf}:=(id_S\times q)_*((id_S\times p)^*(\mf\otimes \tau^{*}\mo_{\p^2}(2)))\otimes \tau^{*}\mo_{\p^2}(-1)$$ is a flat $S$-family of sheaves of class $c^d_d$.  In particular if $\mf_s\in U(d,0)$ then $(\mg_{\mf})_s\in V(d,0,d)$.

(2) Let $\mg$ be a flat $S$-family of torsion-free sheaves of class $c^d_d$ such that $\mg_s|_l\cong\mo_l^{\oplus d}$ for all $s\in S$ where $l\in |H|$ is a generic line.  Then the Fourier transform
$$\mf_{\mg}:=R^1(id_S\times q)_*((id_S\times p)^*(\mg\otimes \tau^{*}\mo_{\p^2}(-1)))\otimes \tau^{*}\mo_{\p^2}(-1)$$
is a $S$-flat family of sheaves of class $u_d$.  In particular, if $\mg_s\in V(d,0,d)$ then $(\mf_{\mg})_s\in U(d,0)$.
\end{lemma} 
\begin{proof}We only show Statement (1) since (2) is analogous.  It is enough to show the $S$-flatness of $\mg_{\mf}$.  By Proposition 2.1.2 in \cite{hl}, it suffices to show that $h_*(\mg_{\mf}\otimes \tau^*\mo_{\p^2}(n))$ is locally free over $S$ for $n\gg0$, where $h:S\times \p^2\ra S$ is the projection.  We have the following commutative diagram
\[\xymatrix{S&S\times \cd\ar[l]_{h^{\cd}}\ar @<0.5ex> [r]^{id_S\times p}  \ar @<-0.5ex> [r]_{id_S\times q}\ar[d]_{\tau^{\cd}}\ar[d] &S\times\p^2\ar[d]^{\tau}\ar[rd]^{h}&\\ & \cd \ar @<0.5ex> [r]^p\ar @<-0.5ex> [r]_q &\p^2&S.}\]
$h^{\cd}=h\circ (id_S\times p)=h\circ (id_S\times q)$, $p\circ \tau^{\cd}=\tau\circ(id_S\times p)$ and $q\circ\tau^{\cd}=\tau\circ(id_S\times q)$.  
Hence 
\[\mg_{\mf}\cong (id_S\times q)_*((id_S\times p)^*(\mf\otimes \tau^*\mo_{\p^2}(2))\otimes (\tau^{\cd})^* q^*\mo_{\p^2}(-1))\]
and
\[\begin{array}{r}h_*(\mg_{\mf}\otimes \tau^*\mo_{\p^2}(n))\cong h^{\cd}_{*}((id_S\times p)^*(\mf\otimes \tau^*\mo_{\p^2}(2))\otimes (\tau^{\cd})^* q^*\mo_{\p^2}(n-1))\\
\cong h_{*}(id_S\times p)_*((id_S\times p)^*(\mf\otimes \tau^*\mo_{\p^2}(2))\otimes (\tau^{\cd})^* q^*\mo_{\p^2}(n-1)).\end{array}\]
Because $\tau$ is flat, $ (id_S\times p)_*(\tau^{\cd})^* q^*\mo_{\p^2}(n-1)\cong \tau^*p_* q^*\mo_{\p^2}(n-1)$ is locally free on $S\times \p^2$ and $R^i(id_S\times p)_*(\tau^{\cd})^* q^*\mo_{\p^2}(n-1)=0$ for all $i\neq0$ and $n\geq0$.  Hence by Lemma \ref{prof} we have 
\[\begin{array}{l}(id_S\times p)_*((id_S\times p)^*(\mf\otimes \tau^*\mo_{\p^2}(2))\otimes (\tau^{\cd})^* q^*\mo_{\p^2}(n-1))\\ \cong \mf\otimes \tau^*\mo_{\p^2}(2)\otimes (id_S\times p)_*(\tau^{\cd})^* q^*\mo_{\p^2}(n-1),~for~all~n\geq0 \\
\cong \mf\otimes \tau^*\mo_{\p^2}(2)\otimes \tau^*p_* q^*\mo_{\p^2}(n-1),~for~all~n\geq0.\end{array}\]
Hence for $n\geq0$,
\[\begin{array}{ll}h_*(\mg_{\mf}\otimes \tau^*\mo_{\p^2}(n))&\cong h_{*}(\mf\otimes \tau^*\mo_{\p^2}(2)\otimes \tau^*p_* q^*\mo_{\p^2}(n-1))\\
&\cong h_{*}(\mf\otimes \tau^*(\mo_{\p^2}(2)\otimes p_* q^*\mo_{\p^2}(n-1))).\end{array}\]
$p_*q^*\mo_{\p^2}(n-1)$ is locally free over $\p^2$ and we have the following sequence
\begin{equation}\label{option}0\ra\mo_{\p^2}(-1)^{\oplus h^0(\mo_{\p^2}(n-2))}\ra\mo_{\p^2}^{\oplus h^0(\mo_{\p^2}(n-1))}\ra p_*q^*\mo_{\p^2}(n-1)\ra 0.\end{equation}
$\mf\otimes \tau^*(\mo_{\p^2}(2)\otimes p_* q^*\mo_{\p^2}(n-1))$ is $S$-flat because so is $\mf$.  Since for all $s\in S$, we have $H^1(\mf_s\otimes\mo_{\p^2}(2))=0$.  Therefore by (\ref{option}) we have $H^1(\mf_s\otimes \mo_{\p^2}(2)\otimes p_* q^*\mo_{\p^2}(n-1))=0$ for all $s\in S$ and hence $h_{*}(\mf\otimes \tau^*(\mo_{\p^2}(2)\otimes p_* q^*\mo_{\p^2}(n-1)))$ is locally free on $S$ for all $n\geq0$.  

The lemma is proved.
\end{proof}

\begin{lemma}\label{prof}Let $f:X\ra Y$ be a flat morphism of ringed spaces.  Let $F$ be a coherent sheaf over $Y$ with finite homological dimension.  Let $\ma$ be a sheaf of $\mo_X$-modules over $X$ such that $R^{i}f_*\ma=0$ except for $i=i_0$ and $\Tor^j_{\mo_X}(\ma,f^*F)=\Tor^j_{\mo_Y}(R^{i_0}f_*\ma, F)=0$ for all $j>0$.  Then we have
\begin{equation}\label{lonely}\forall~i,~R^if_*(f^*F\otimes \ma)\cong F\otimes R^{i}f_*\ma.\end{equation}
\end{lemma}
\begin{proof}Since $F$ is of finite homological dimension, we can take a locally free resolution of it of finite length
\[E^{\bullet}\ra F.\]
then the sequence
\[f^*E^{\bullet}\otimes \ma\ra f^*F\otimes \ma\]
is still exact since $f$ is flat, $E^n$ are locally free and $\Tor^j_{\mo_X}(\ma,f^*F)=0$ for all $j>0$.

On the other hand, for any $n,i$,
$R^if_*(f^*E^n\otimes \ma)\cong E^n\otimes R^if_*\ma$ by the projection formula.  Thus for $i\neq i_0$, $R^if_*(f^*E^n\otimes \ma)=0$ for all $n$.  Hence $R^{\bullet}f_*(f^*F \otimes \ma)$ can be computed by the sequence $R^{i_0}f_*(f^*E^{\bullet}\otimes \ma)\cong E^{\bullet}\otimes R^{i_0}f_*\ma$.  

Hence (\ref{lonely}) holds because the sequence 
\[E^{\bullet}\otimes R^{i_0}f_*\ma\ra F\otimes R^{i_0}f_{*}\ma\]
is still exact by $\Tor^j_{\mo_Y}(R^{i_0}f_*\ma,F)=0$ for all $j>0$.  Hence the lemma. 
\end{proof}
\begin{lemma}\label{tor}Let $X$ a projective surface, $S$ be any scheme of finite type.  Let $\mg_1,\mg_2$ be two $S$-flat families of torsion-free sheaves on $X$, and let $\mf_1,\mf_2$ be two $S$-flat families of 1-dimensional pure sheaves.  Moreover we ask that for a generic $s\in S$, the intersection of supports of $(\mf_1)_s$ and $(\mf_2)_s$ is of dimension 0.  Then 
\begin{enumerate}
\item[(1)] $\Tor^j_{\mo_{X\times S}}(\mg_1,\mg_2)=0$ for all $j>0$;
\item[(2)] $\Tor^j_{\mo_{X\times S}}(\mg_k,\mf_l)=0$ for all $j>0$, $k,l=1,2$;
\item[(3)] $\Tor^j_{\mo_{X\times S}}(\mf_1,\mf_2)=0$ for all $j>0$.
\end{enumerate}
\end{lemma}
\begin{proof}$\mg_i$ and $\mf_i$ are all of homological dimension 1 for $i=1,2$.  We Hence we only need to show $\Tor^1_{\mo_{X\times S}}(\mg_1,\mg_2)=\Tor^1_{\mo_{X\times S}}(\mf_k,\mg_l)=\Tor^1_{\mo_{X\times S}}(\mf_1,\mf_2)=0$.

$\Tor^1_{\mo_{X\times S}}(\mg_1,\mg_2)$ is a subsheaf of $\mg_1\otimes\me$ for some $\me$ locally free.  But on the other hand the support of $\Tor^1_{\mo_{X\times S}}(\mg_1,\mg_2)$ consists of all points $(x,s)$ such that not both $(\mg_1)_s$ and $(\mg_2)_s$ are locally free at $x$.  Hence $\Tor^1_{\mo_{X\times S}}(\mg_1,\mg_2)$ is a torsion sheaf while it is also a subsheaf of the torsion-free sheaf $\mg_1\otimes\me$, and hence $\Tor^1_{\mo_{X\times S}}(\mg_1,\mg_2)=0$. 

Analogous arguments apply to $\Tor^1_{\mo_{X\times S}}(\mf_k,\mg_l)$ and $\Tor^1_{\mo_{X\times S}}(\mf_1,\mf_2)$ and hence the lemma.
\end{proof}

Let $\mf^S$ ($\mf^T$, resp.) be a $S$-flat ($T$-flat, resp.) family of sheaves in $U(d,0)$ ($U(r,0)$, resp.), and let $\mg^S$ ($\mg^T$, resp.) be the Fourier transform of $\mf^S$ ($\mf^T$, resp.).  Then by Lemma \ref{ftinf}, $\mg^S$ ($\mg^T$, resp.) is a $S$-flat ($T$-flat, resp.) family of sheaves in $V(d,0,d)$ ($V(r,0,r)$, resp.)

Denote by $\z^r_d(r)|_S$ ($\z^d_r(d)|_T$, resp.) the pullback of $\z^r_d(r)$ ($\z^d_r(d)$, resp.) on $U(d,0)$ ($U(r,0)$, resp.) via the classifying map $S\ra U(d,0)$ ($T\ra U(r,0)$) induced by $\mf^S$ ($\mf^T$, resp.), and denote by $\lambda_d(r)|_S$ ($\lambda_r(d)|_T$, resp.) the pullback of $\lambda_d(r)$ ($\lambda_r(d)$, resp.) on $V(d,0,d)$ ($V(r,0,r)$, resp.) via the classifying map $S\ra V(d,0,d)$ ($T\ra V(r,0,r)$) induced by $\mg^S$ ($\mg^T$, resp.).  Define
$$\rd^1:=\big\{(s,t)\in S\times T\big| H^0(\mf^S_s\otimes \mg^T_t)\neq 0\big\};$$
$$\rd^2:=\{(s,t)\in S\times T\big|H^0(\mg^S_s\otimes\mf^T_t)\neq 0\}.$$
Then according to Theorem 2.1 in \cite{nila}, $\rd^1$ ($\rd^2$, resp.) is a divisor of line bundle $\z^r_d(r)|_S\boxtimes \lambda_r(d)|_T$ ($\lambda_d(r)|_S\boxtimes \z^d_r(d)|_T$, resp.), which induces a map $\zeta_{\rd^1}$ ($\zeta_{\rd^2}$, resp.) as follows.
\[H^0(S,\z^r_d(r)|_S)^{\vee}\xlongrightarrow{\zeta_{\rd^1}} H^0(T,\lambda_r(d)|_T);\]
\[H^0(S,\lambda_d(r)|_S)^{\vee}\xlongrightarrow{\zeta_{\rd^2}} H^0(T,\z^d_r(d)|_T).\]
\begin{prop}\label{ft}$\z^r_d(r)|_S\boxtimes \lambda_r(d)|_T\cong\lambda_d(r)|_S\boxtimes \z^d_r(d)|_T$ and $\rd^1=\rd^2$.  In particular, we have the following commutative diagram
\begin{equation}\xymatrix{H^0(S,\z^r_d(r)|_S)^{\vee}\ar[r]^{\zeta_{\rd^1}}\ar[d]_{\cong}&H^0(T,\lambda_r(d)|_T)\ar[d]^{\cong}\\
H^0(S,\lambda_d(r)|_S)^{\vee}\ar[r]_{\zeta_{\rd^2}} &H^0(T,\z^d_r(d)|_T). }
\end{equation}
\end{prop}
\begin{proof}We have the following commutative diagram
\[\xymatrix@C=1.2cm{S\times \cd\ar @<0.5ex> [d]^{id_S\times q}  \ar @<-0.5ex> [d]_{id_S\times p}& S\times T\times\cd \ar[l]_{\alpha_{S}^{\cd}}\ar @<0.5ex> [d]^{id_{S\times T}\times q}  \ar @<-0.5ex> [d]_{id_{S\times T}\times p}\ar[r]^{\alpha_T^{\cd}} & T\times\cd \ar @<0.5ex> [d]^{id_T\times q}  \ar @<-0.5ex> [d]_{id_T\times p}\\ S\times\p^2\ar[d]_{\tau_S}& S\times T\times \p^2\ar[l]^{\alpha_S}\ar[ld]^{\tau_{S\times T}}\ar[r]_{\alpha_T}\ar[d]_{h}\ar[rd]_{\tau_{S\times T}} & T\times\p^2 \ar[d]^{\tau_T}\\ \p^2 & S\times T&\p^2.}\]
By the basic property of the determinant line bundles (see Ch 8 Theorem 8.1.5 in \cite{hl}) we have 
\[\z^r_d(r)|_S\boxtimes \lambda_r(d)|_T\cong det^{-1}(R^{\bullet}h_{*}(\alpha_S^*\mf^S\otimes\alpha_T^*\mg^T)),\]
and 
\[\lambda_d(r)|_S\boxtimes \z^d_r(d)|_T\cong det^{-1}R^{\bullet}h_{*}(\alpha_S^*\mg^S\otimes\alpha_T^*\mf^T).\]

Let 
$$\widetilde{\mg}:=((id_{S\times T}\times p)^*\alpha_S^*(\mg^S\otimes \tau_S^*\mo_{\p^2}(-1)))\otimes ((id_{S\times T}\times q)^*\alpha_T^*(\mg^T\otimes \tau_T^*\mo_{\p^2}(-1))).$$
Then by Lemma \ref{tor} and Lemma \ref{prof} we have 
\[R^i(id_{S\times T}\times p)_{*}\widetilde{\mg}=0,~\forall~i\neq1,~R^1(id_{S\times T}\times p)_{*}\widetilde{\mg}\cong \alpha_S^{*}\mg^S\otimes\alpha_T^{*}\mf^T,\]
and 
\[R^i(id_{S\times T}\times q)_{*}\widetilde{\mg}=0,~\forall~i\neq1,~R^1(id_{S\times T}\times q)_{*}\widetilde{\mg}\cong \alpha_S^{*}\mf^S\otimes\alpha_T^{*}\mg^T.\]

On the other hand, $h^{\cd}:=h\circ (id_{S\times T}\times p)=h\circ (id_{S\times T}\times q)$.  Hence

$$\z^r_d(r)|_S\boxtimes \lambda_r(d)|_T\cong det(R^{\bullet}h^{\cd}_{*}\widetilde{\mg})\cong \lambda_d(r)|_S\boxtimes \z^d_r(d)|_T.$$

Notice that $\forall~(s,t)\in S\times T$, $\widetilde{\mg}_{s,t}=p^*\mg_s^S(-1)\otimes q^*\mg_t^T(-1)$ is of Euler characteristic zero.  This is because $R^ip_{*}\widetilde{\mg}_{s,t}=0,~\forall~i\neq 1$ and $R^1p_{*}\widetilde{\mg}_{s,t}\cong \mf^S_s\otimes\mg_t^T$ is of Euler characteristic zero.  Also we know that $H^i(\widetilde{\mg}_{s,t})\cong H^{i-1}( \mf^S_s\otimes\mg_t^T)\cong H^{i-1}( \mg^S_s\otimes\mf_t^T)$.  Let
\[\rd^{\cd}:=\big\{(s,t)\in S\times T\big| H^1(\cd,p^*\mg_s^S(-1)\otimes q^* \mg_t^T(-1))\neq 0\big\}.\]
Then $\rd^{\cd}=\rd^1=\rd^2$ set-theoretically.
To complete the proof of the proposition, We only need to construct a section of $det(R^{\bullet}h^{\cd}_{*}\widetilde{\mg})$ defining $\rd^{\cd}$ as a divisor.
Take an exact sequence on $S\times \p^2$ 
\[0\ra\ma^S\ra \mb^S\ra\mg^S\otimes\tau_S^*\mo_{\p^2}(-1)\ra 0,\]
such that $\mb^S\cong \tau_S^*\mo_{\p^2}(-m_S)^{\otimes V_S}$ with $m_S\gg 0$ and $V_S$ some vector space, and $\ma^S$ is locally free.  Then on $S\times T\times \cd$ we have
\[0\ra\widetilde{\ma}\ra\widetilde{\mb}\ra\widetilde{\mg}\ra 0,\]
where $\widetilde{\ma}\cong ((id_{S\times T}\times p)^*\alpha_S^*(\ma^S))\otimes ((id_{S\times T}\times q)^*\alpha_T^*(\mg^T\otimes \tau_T^*\mo_{\p^2}(-1)))$, and
$\widetilde{\mb}\cong ((id_{S\times T}\times p)^*\alpha_S^*(\mb^S))\otimes ((id_{S\times T}\times q)^*\alpha_T^*(\mg^T\otimes \tau_T^*\mo_{\p^2}(-1)))$.  We claim that
$R^ih^{\cd}_*\widetilde{\ma}=R^ih^{\cd}_*\widetilde{\mb}=0,~\forall~i\neq 2$ and both $R^2h^{\cd}_*\widetilde{\ma}$ and $R^2h^{\cd}_*\widetilde{\mb}$ are locally free of the same rank.  If this is true, then $R^{\bullet}h_{*}^{\cd}\widetilde{\mg}$ can be computed by the following sequence
\[R^2h^{\cd}_*\widetilde{\ma}\xlongrightarrow{\eta}R^2h^{\cd}_*\widetilde{\mb},\]
i.e. $R^i h_{*}^{\cd}\widetilde{\mg}=0,~\forall~i\neq 1,2$, $R^1 h_{*}^{\cd}\widetilde{\mg}\cong Ker(\eta)$ and $R^2 h_{*}^{\cd}\widetilde{\mg}\cong Coker(\eta)$.  Therefore $det(\eta)$ gives a section of $det(R^{\bullet}h^{\cd}_{*}\widetilde{\mg})$ defining $\rd^{\cd}$.

To show $R^ih^{\cd}_*\widetilde{\ma}=R^ih^{\cd}_*\widetilde{\mb}=0,~\forall~i\neq 2$, it is enough to show $H^i(p^{*}\ma^S_s\otimes q^*(\mg_t^T(-1)))=H^i(p^{*}\mb^S_s\otimes q^*(\mg_t^T(-1)))=0$ for all $(s,t)\in S\times T$ and $i\neq 2$.  Since $R^j p_*(q^*\mg_t^T(-1))=0$ for all $j\neq1$ and $R^1p_*(q^*\mg_t^T(-1))\cong \mf^T_t(1)$, we have
\[H^i(\widetilde{\ma}_{s,t})=H^i(p^{*}\ma^S_s\otimes q^*(\mg_t^T(-1)))=H^{i-1}(\ma^S_s\otimes \mf^T_t(1));\]
\[H^i(\widetilde{\mb}_{s,t})=H^i(p^{*}\mb^S_s\otimes q^*(\mg_t^T(-1)))=H^{i-1}(\mb^S_s\otimes \mf^T_t(1)).\] 
Hence $H^i(p^{*}\ma^S_s\otimes q^*(\mg_t^T(-1)))=H^i(p^{*}\mb^S_s\otimes q^*(\mg_t^T(-1)))=0,~ \forall~i\neq 2$ by $\mb^S_s\cong\mo_{\p^2}(-m_S)^{\otimes V_S} $ with $m_S\gg0$.

$R^2h^{\cd}_*\widetilde{\ma}$ and $R^2h^{\cd}_*\widetilde{\mb}$ are of the same rank because $\widetilde{\mg}_{s,t}$ is of Euler characteristic zero and hence $\widetilde{\ma}_{s,t}$ and $\widetilde{\mb}_{s,t}$ are of  the same Euler characteristic.  The proposition is proved.
\end{proof}
For any $d,r,n>0$, we have the strange duality map
\begin{equation}\label{dsdm}SD_{d,c^r_n}:H^0(M(d,0),\z^r_d(n))^{\vee}\ra H^0(\wrn,\lcd).\end{equation}  Denote it simply by $SD_{d,(r)}$ if $r=n$.  Notice that the dual map $SD^{\vee}_{d,(2)}$ is the map $SD$ in (\ref{sdm}).
Proposition \ref{ft} implies the following corollary.
\begin{coro}\label{com}The Fourier transform $\Phi$ in (\ref{bcor}) identifies the determinant line bundle $\z_d^r(r)$ on $U(d,0)$ with $\lambda_d(r)$ on $V(d,0,d)$ for any $d,r>0$.  Moreover we have a commutative (up to scalars) diagram as follows
\begin{equation}\xymatrix@C=1.2cm{H^0(U(d,0),\z^r_d(r))^{\vee}\ar[r]^{SD_{d,(r)}}\ar[d]_{\cong}&H^0(V(r,0,r),\lambda_r(d))\ar[d]^{\cong}\\
H^0(V(d,0,d),\lambda_d(r))^{\vee}\ar[r]_{SD_{r,(d)}^{\vee}} &H^0(U(r,0),\z^d_r(d)). }
\end{equation}
\end{coro}

\begin{prop}\label{din}Let $d=1,2$.  Then the strange duality map $SD_{d,(c^r_n)}$ in (\ref{dsdm}) is injective for any $r>0$ and $n=ar$ with $a\in\mathbb{Z}_{\geq 1}$.
\end{prop}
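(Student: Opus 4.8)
The plan is to reduce the injectivity of $D$ for general $r$ and $n = ar$ to the case $r = n = d$ that is already packaged in Corollary \ref{com} together with Le Potier's isomorphism $\Phi$ of Theorem \ref{lp}. The key observation is that the strange duality pairing for a class $c^r_n$ factors through the pairing for a "primitive" class, because the determinant line bundle behaves multiplicatively: on $M(d,0)$ we have $\z^r_d(n) = \z^r_d \otimes \pi^{*}\mo_{|dH|}(n)$, and the section cutting out $\mathscr{D}$ for a sum of simpler classes is the product of the sections for the summands. Concretely, write the class $c^r_n$ with $n = ar$ as a sum of $r$ copies of the class $c^1_a = [\mo_{\p^2}] - a[\mo_x]$ in the Grothendieck group. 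A polystable sheaf in $W(r,0,ar)$ of the form $F^{\oplus r}$ with $F \in W(1,0,a)$ lies in $\mathscr{D}$ (paired against a fixed $E \in M(d,0)$) iff $F$ alone does, and more usefully the restriction of the theta divisor on $M(d,0)$ associated to $c^r_n$ decomposes.

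First I would make precise the multiplicativity: for $E \in M(d,0)$ and the class $c^r_n$, the determinant line bundle $\z^r_d(n)$ and its canonical section (whose zero locus is $\{E : H^0(E \otimes F_{c^r_n}) \neq 0\}$) are the $r$-th tensor power of the pair $(\z_d, s)$ twisted by $\pi^{*}$ of the section of $\mo_{|dH|}(n)$. Hence a section of $\z^r_d(n)$ that is a product of pullbacks of sections of $\z_d$ and of $\mo_{|dH|}(n)$ comes from the strange duality image of a suitable product of sections on the $W$-side. Second, I would use Le Potier's map: for $d = 1, 2$, $\Phi : M(d,0) \xrightarrow{\sim} W(d,0,d)$ is an isomorphism identifying $\z^r_d(n)$ with $\lambda_d(n)$ (this is Corollary \ref{com} for $n=r$; the general twist $n = ar$ follows the same way since $\mo_{|dH|}(1)$ corresponds under $\Phi$ to a fixed line bundle on $W(d,0,d)$, e.g. via Proposition \ref{ftd} applied to $F_i$ the skyscraper-twist classes). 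Under this identification the map $D$ becomes the strange duality map between $H^0(W(d,0,d), \lambda_{c^r_n}(d))^{\vee}$ and $H^0(W(r,0,n), \lambda_d(d))$, which is a pairing between two moduli of torsion-free sheaves on $\p^2$ of the same "theta" type; here I would invoke the known injectivity/surjectivity results of Danila and Abe (and the author's rank-one case in \cite{yuan}) for such pairs, or argue directly that the tautological sections separate points.

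The cleanest route, and the one I would actually carry out: exhibit enough sections in the image of $D$ to conclude injectivity by a separation argument. Given a nonzero functional $\phi \in H^0(M(d,0), \z^r_d(n))^{\vee}$, I want a section in the image not killed by $\phi$. Take $\phi$ to be (dual to) evaluation at a general sheaf $E_0 \in M(d,0)$. Then I need a semistable sheaf $G_0 \in W(r,0,n)$ with $H^0(E_0 \otimes G_0) = 0$, i.e. $E_0 \notin \mathscr{D}_{G_0}$; its associated section of $\z^r_d(n)$ is nonzero at $E_0$, and by the multiplicativity above plus Le Potier's isomorphism it lies in the image of $D$. Existence of such $G_0$ (choose it of the form $F_0^{\oplus r}$ for $F_0 \in W(1,0,a)$ generic, and use the rank-one case where strange duality is fully known from \cite{yuan}, Section 4.3, to see that $\mathscr{D}_{F_0}$ is a proper subvariety missing a general $E_0$) gives injectivity on the dense set of functionals of evaluation type, hence everywhere by density and linearity.

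The main obstacle I anticipate is the compatibility of the Fourier–Mukai identification with the twist by $\mo_{|dH|}(n)$ for $n > r$: Corollary \ref{com} as stated only covers $n = r$, so I would need to check that $\Phi^{*}$ of the polarization $\mo_{|dH|}(1)$ on the $W$-side is the restriction of $\mo_{|H|}(1)$ under the support map — plausible from the construction of the Fourier transform (the support map $\pi$ on $M(d,0)$ corresponds to the "second Chern class / point-support" map on $W(d,0,d)$), but it requires an honest computation with the universal family on $\mathcal{D} \subset \p^2 \times |H|$, analogous to the proof of Proposition \ref{ft}. The secondary subtlety is the locus of strictly semistable sheaves and whether the determinant line bundle and its sections are well-defined there; since $n = ar$ makes $c^r_n$ non-primitive this must be handled, but the slightly stronger genericity condition mentioned in the introduction (following \cite{yuan}, Section 2) is exactly what is needed.
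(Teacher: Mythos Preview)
Your ``cleanest route'' contains a genuine logical gap. Showing that $D(\phi_{E_0})\neq 0$ for $\phi_{E_0}$ the evaluation functional at a general $E_0\in M(d,0)$ does \emph{not} yield injectivity of $D$. First, the evaluation functionals are not dense in $H^0(M(d,0),\z_d^r(n))^{\vee}$: for $d=1,2$ the map $M(d,0)\cong|dH|\to\mathbb{P}(H^0(\z_d^r(n))^{\vee})$ is the $n$-th Veronese, whose image is a proper subvariety once $n\geq 2$. Second, and more fundamentally, even if a linear map is nonzero on a spanning (or dense) set of vectors, it need not be injective; a linear combination of such vectors can still lie in the kernel. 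What you actually need is the transpose statement: $D$ is injective if and only if the sections $\{s_G:G\in W(r,0,n)\}$ span $H^0(M(d,0),\z_d^r(n))$, and that is what must be shown.

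You are also overcomplicating matters by routing through the Fourier transform $\Phi$ and Corollary~\ref{com}. Proposition~\ref{din} is a statement about $D$, not about $SD$, and its proof in the paper uses neither $\Phi$ nor any compatibility of twists under it; those tools enter only afterwards, in the proof of Proposition~\ref{inje}. The paper's argument is direct and elementary: for $d=1,2$ one has $M(d,0)\cong|dH|$ and $\z_d^r(n)\cong\mo_{|dH|}(n)$, so $H^0(\z_d^r(n))$ is spanned by monomials in hyperplane sections of the form ``curves through a fixed point $x$''. One then writes down explicit polystable sheaves $G\in W(r,0,ar)$ as direct sums $\bigoplus_{k=1}^{r} I_k$ of ideal sheaves of $a$ points each, checks that $H^0(F\otimes G)\neq 0$ iff $\mathrm{Supp}(F)$ meets the cosupport of $G$, and observes that the resulting sections $s_G$ realise exactly those spanning monomials. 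Your multiplicativity remark in the first paragraph is pointing in this direction, but you abandon it for the flawed separation argument rather than carrying it out.
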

\begin{proof}Let $G$ be a sheaf in $\wrn$.  By Theorem 2.1 in \cite{nila} we can associate to $G$ a section $s_G$ of $\z^r_d(n)$ given by the divisor $D_G:=\{F\in\md|H^0(F\otimes G)\neq0\}$.  To show that $SD_{d,(r,0,n)}$ is injective, it is enough to show that we can find a collection of finitely many sheaves $\{G_i\}$ in $W(r,0,ar)$ such that $\{s_{G_i}\}$ spans $H^0(\md,\z_d^r(ar))$.   

For $d=1,2$, $M(d,0)\cong \ls$ and $\z_d^r(n)\cong\mo_{\ls}(n)$ for all $r>0$ (see \cite{nila} or Proposition 4.1.1 in \cite{yuan}).  We choose a finite collection of distinct points $\{x_j\}_{j\in J}$, and associate to each point $x_j$ a divisor consisting of curves passing through $x_j$, which gives a section $t_j$ of $\mo_{\ls}(1)$.  Let $N=dim~\ls$, then it is possible to choose $N+1$ distinct points $x_j$ such that $\{t_j\}_{j=1}^{N+1}$ spans $H^0(\mo_{\ls}(1))$.  Hence we can choose $n(N+1)$ distinct points $x^k_j$ with $1\leq j\leq N+1$, $1\leq k\leq n$ such that $\{t_{j_1,\cdots,j_n}\}$ spans $H^0(\mo_{\ls}(n))$, where $t_{j_1\cdots,j_n}$ is defined as follows.

\[t_{j_1,\cdots,j_n}:=\prod_{k=1}^n t^k_{j_k},~with~t^k_{j_k} the~section~associated~to~x^k_{j_k}.\]

Let $n=ar$, then we define a collection of semistable sheaves $\{G_{i}\}$ consisting of all the sheaves of the form $\displaystyle{\bigoplus_{l=1}^r} ~I_{Z_{l}}$, where $I_{Z_l}$ is the ideal sheaf of $Z_l$ and $Z_l$ consists of $a$ distinct points in $\{x^k_j\}$.  Let $F\in \md$ with $d=1,2$.  Then $H^0(F\otimes G)\neq0$ for $G=\displaystyle{\bigoplus_{l=1}^r} ~I_{Z_l}$ iff $Supp(F)\cap(\displaystyle{\bigcup_{l=1}^r}~ Z_l))\neq\emptyset$.  Hence $\{s_{G_i}\}$ spans $H^0(\mo_{\ls}(ar))=H^0(\md,\z_d^r(ar))$.  Hence the proposition.
 \end{proof}
 \begin{rem}\label{lunch}We have the restriction map $$H^0(W(r,0,r),\lambda_r(d))\xrightarrow{\jmath_r} H^0(V(r,0,r),\lambda_r(d)).$$
Then $\jmath_r\circ SD_{d,(r)}$ is injective for $d=1,2$ since $G_i=\displaystyle{\bigoplus_{l=1}^r}~ I_{Z_{l}}\in V(r,0,r)$. 
 \end{rem}
\begin{proof}[Proof of Proposition \ref{inje}]According to Corollary \ref{com} and Theorem \ref{lp}, we have the following commutative diagram
\begin{equation}\xymatrix@C=1.2cm{H^0(M(2,0),\z^d_2(d))^{\vee}\ar[r]^{SD_{2,(d)}}\ar[d]_{\cong}& H^0(W(d,0,d),\lambda_d(2))\ar[r]^{\jmath_d} &H^0(V(d,0,d),\lambda_d(2))\ar[d]^{\cong}\\
H^0(W(2,0,2),\lambda_2(d))^{\vee}\ar[r]_{SD=SD_{d,(2)}^{\vee}} &H^0(\md,\z^2_d(2))\ar[r]_{\imath_d}&H^0(U(d,0),\z^2_d(2)). }
\end{equation}
Then the injectivity of $SD$ follows from Remark \ref{lunch}.
\end{proof}
\begin{rem}\label{expan3}For $d\geq 3$, actually the map $\Phi$ can be extended to a larger open subset than $U(d,0)$ and we have the following isomorphism for all $d,r$
$$\Phi^{*}:H^0(W(d,0,d),\lambda_d(r))\xrightarrow{\cong}H^0(\md,\z^r_d(r)).$$(See Theorem \ref{App1} in the appendix.)
\end{rem}

\section{Surjectivity of the strange duality map.}
We already know that the map $SD$ is injective, to prove the surjectivity, it is enough to show that 
\begin{equation}\label{less}h^0(\md,\z_d^2(2))\leq h^0(W(2,0,2),\lambda_2(d)).\end{equation}
We already know that 
\[h^0(W(2,0,2),\lambda_2(d))=h^0(M(2,0),\z_2^d(d))={5+d \choose d}.\] 
\begin{lemma}\label{smalld}For $d\leq 3$, $h^0(\md,\z_d^2(2))=\left(\begin{array}{c}5+d\\d\end{array}\right)$.
\end{lemma}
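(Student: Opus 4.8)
The plan is to verify the equality $\dim H^0(\md,\z_d^2(2))=\binom{5+d}{d}$ for each of the three small values $d=1,2,3$ by reducing to cases where the left-hand side can be computed directly. For $d=1$ and $d=2$, Theorem \ref{lp} tells us that $\Phi:\md\dashrightarrow W(d,0,d)$ is an honest isomorphism, and Corollary \ref{com} identifies $\z_d^2(2)$ with $\lambda_d(2)$ on $W(d,0,d)$. Since we already know from the displayed formula preceding the lemma that $\dim H^0(W(d,0,d),\lambda_d(2))=\dim H^0(M(d,0),\z_d^2(2))$—wait, that is circular—so instead I would use the known description $M(d,0)\cong |dH|=\ls$ and $\z_d^r(n)\cong\mo_{\ls}(n)$ for $d=1,2$ (cited in the proof of Proposition \ref{din} from \cite{nila} and \cite{yuan}). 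Then $H^0(\md,\z_d^2(2))=H^0(\p^N,\mo_{\p^N}(2))$ with $N=\dim\ls$, which has dimension $\binom{N+2}{2}$. For $d=1$, $N=2$ gives $\binom{4}{2}=6=\binom{6}{1}$; for $d=2$, $N=5$ gives $\binom{7}{2}=21=\binom{7}{2}$. So the first two cases are essentially bookkeeping with binomial coefficients once the identification $\md\cong\p^N$ is invoked.

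The genuinely new case is $d=3$, where $\md$ has dimension $d^2+1=10$ and is no longer a projective space, so $\Phi$ is only birational and $\z_3^2(2)$ is not a power of a hyperplane bundle on a $\p^N$. Here the strategy is to invoke the injectivity already established in Proposition \ref{inje} (which holds for all $d>0$), giving the inequality $\dim H^0(W(2,0,2),\lambda_2(d))\leq\dim H^0(\md,\z_d^2(2))$, i.e. $\binom{5+d}{d}\leq\dim H^0(\md,\z_3^2(2))$ when $d=3$, so $56\leq\dim H^0(M(3,0),\z_3^2(2))$. To get the reverse inequality I would compute $\dim H^0(\md,\z_3^2(2))$ by a direct cohomological argument: one natural route is to pass through the birational model $W(3,0,3)$ via Theorem \ref{lp}, noting that $\Phi$ is an isomorphism away from the divisor $D_{\z_3}$ and its image, so sections of $\z_3^2(2)$ on $\md$ correspond to sections of the pullback line bundle on the open locus, and since $\md$ is normal (indeed Cohen–Macaulay, by item 2 of Section 2) one controls the extension across the boundary divisor by a codimension/Hartogs-type argument. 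Alternatively, and perhaps more cleanly, one can use that $\z_3^2(2)=\z_3^2\otimes\pi^*\mo_{\ls}(2)$ and run a Leray/projection-formula computation along $\pi:\md\to\ls$, using that $\dim H^0(\z_3)=1$ and controlling $\pi_*(\z_3^2)$.

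The main obstacle I anticipate is precisely the $d=3$ computation of $\dim H^0(\md,\z_3^2(2))$: unlike $d=1,2$ there is no projective-space shortcut, the moduli space is genuinely ten-dimensional, and the determinant bundle $\z_3^2(2)$ is not the obvious ample generator. One must either have an explicit enough handle on $\md$ (perhaps via its description in terms of plane cubics and the relevant Brill–Noether loci, or via a known stratification) to compute the cohomology of $\z_3^2(2)$ directly, or cleverly exploit the birational map $\Phi$ together with the fact that $\z_3^2(2)$ corresponds under $\Phi$ to $\lambda_3(2)$ on the open part of $W(3,0,3)$ and argue that no sections are lost or gained across the exceptional locus. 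I would expect the argument to hinge on showing that the complement of the locus where $\Phi$ is an isomorphism has codimension at least two in both $\md$ and $W(3,0,3)$ (so that $H^0$ is unchanged by normality), reducing the $d=3$ case to a computation of $H^0(W(3,0,3),\lambda_3(2))$, which in turn one hopes is accessible by the same methods that give the $\binom{5+d}{d}$ formula for $\dim H^0(W(2,0,2),\lambda_2(d))$. If the codimension-two claim fails, one would need a more careful analysis of the exceptional divisor, and that is where the real work lies.
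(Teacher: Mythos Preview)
Your treatment of $d=1,2$ is fine and is essentially what Proposition~4.1.1 of \cite{yuan} (one of the references in the paper's one-line proof) says: $M(d,0)\cong |dH|\cong\mathbb{P}^N$ with $N=\tfrac{d(d+3)}{2}$ and $\Theta_d^r(n)\cong\mathcal{O}_{|dH|}(n)$, so $h^0=\binom{N+2}{2}$, matching $\binom{5+d}{d}$.

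For $d=3$ your proposal has a genuine gap. The lower bound via Proposition~\ref{inje} is legitimate, but neither of your routes to the upper bound works as stated. The birational/Hartogs approach via $\Phi$ fails because, by Theorem~\ref{lp}, $\Phi$ is undefined precisely along the divisor $D_{\Theta_3}$, so the complement of its domain has codimension~$1$, not~$2$; normality of $M(3,0)$ then gives you no control over which sections on the open part extend across $D_{\Theta_3}$, and the same obstruction occurs on the $W(3,0,3)$ side (the locus of bundles whose restriction to a generic line is nontrivial is again a divisor). Your Leray route requires knowing $\pi_*\Theta_3^2$, which is not $\mathcal{O}_{|3H|}$ and is not computed anywhere in this paper; the only pushforward available here is $\pi_*\Theta_d\cong\mathcal{O}_{|dH|}$, which handles $\Theta_d(n)$ but not $\Theta_d^2(n)$. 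You yourself flag this as ``where the real work lies,'' and indeed that work is not done in your outline.

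The paper's own proof attempts none of this: it simply cites the result as already established in \cite{nila} (Danila) or as a direct computation via Proposition~4.1.1 and Theorem~4.4.1 of \cite{yuan}, where $h^0(M(3,0),\Theta_3^r(n))$ is computed from an explicit description of $M(3,0)$ rather than through the Fourier transform. So for $d=3$ the intended argument is to invoke that external computation, not to reduce to $W(3,0,3)$.
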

\begin{proof}We can find this result in \cite{nila} or use Proposition 4.1.1 and Theorem 4.4.1 in \cite{yuan} to compute directly.
\end{proof}

Let $\mdi$ be the open subscheme of $\md$ parameterzing sheaves with integral supports.  The following lemma follows from Theorem 4.17, Example 4.18 (1) and Remark 6.3 in \cite{yyfifth}. 
\begin{lemma}\label{coding}The codimension of $\md\setminus\mdi$ is $\geq \min\{d-1,7\}$, for all $d$. 
\end{lemma}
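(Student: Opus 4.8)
The plan is to stratify the boundary $\md-\mdi$ by means of the support morphism $\pi:\md\to\ls$, $F\mapsto\mathrm{supp}(F)$ (the Fitting support), so that $\mdi=\pi^{-1}(U)$ for $U\subset\ls$ the open locus of integral curves and $\md-\mdi=\pi^{-1}(\ls-U)$; the whole question then reduces to a dimension count on $\ls$ together with a bound on the fibres of $\pi$. First I would record (this is Proposition~3.8 of \cite{yyfifth}, and is in any case classical) that $\md$ is irreducible of dimension $d^2+1=\dim\ls+\binom{d-1}{2}$, where $\binom{d-1}{2}$ is the arithmetic genus of any degree-$d$ plane curve. Then I would write $\ls-U=\bigsqcup_\tau(\ls)_\tau$, where $\tau$ runs over the combinatorial types of non-integral curves: $\tau$ records an unordered collection of distinct integral components $C_1,\dots,C_k$ of degrees $d_1,\dots,d_k$ with multiplicities $m_1,\dots,m_k$, $\sum_j m_jd_j=d$ and $(m_1,\dots,m_k)\neq(1)$. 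The generic point of $(\ls)_\tau$ has all $C_j$ smooth and in general position, and $\dim(\ls)_\tau=\sum_j\dim|d_jH|=\sum_j\tfrac{d_j(d_j+3)}{2}$.

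For a reduced type (all $m_j=1$) the estimate is completely elementary. Over a reduced plane curve $C$ of degree $d$, every rank-one torsion-free sheaf has first Chern class $dH$, and those of Euler characteristic $0$ form the compactified Jacobian $\overline{J}(C)$, which has dimension $p_a(C)=\binom{d-1}{2}$; hence $\dim\pi^{-1}(C)\le\binom{d-1}{2}$ for every reduced $C$, and so $\dim\pi^{-1}\big((\ls)_\tau\big)\le\sum_j\tfrac{d_j(d_j+3)}{2}+\binom{d-1}{2}$. Subtracting this from $d^2+1=\tfrac{d(d+3)}{2}+\binom{d-1}{2}$ and using $\sum_j d_j=d$ gives the codimension bound $\tfrac{1}{2}\big(d^2-\sum_j d_j^2\big)=\sum_{j<l}d_jd_l$, which over all partitions of $d$ into at least two parts is minimized exactly by $(1,d-1)$ — i.e. by the union of a line with an integral curve of degree $d-1$ — where it equals $d-1$, and is strictly larger for every other reduced type. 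Thus the reduced part of $\md-\mdi$ has codimension at least $d-1\ge\min\{d-1,7\}$.

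The substantial part, and what I expect to be the only real obstacle, is the non-reduced types, where some component carries multiplicity $\ge2$. There the naive identity $\dim\pi^{-1}(C)=p_a(C)$ breaks down: genuinely pure sheaves supported on a fat curve form a family strictly larger than the compactified Jacobian of the underlying reduced curve, so the fibre of $\pi$ can jump. Controlling this jump is exactly the content of Proposition~3.16 and Remark~4.11 of \cite{yyfifth}: one filters a pure sheaf on a multiple curve by the restrictions to its reduced components, bounds the excess of $\dim\pi^{-1}(C)$ over $\binom{d-1}{2}$ in terms of the intersection numbers of the components, and uses semistability — a large proportion of these sheaves are strictly semistable and $S$-equivalent to sheaves supported on proper subcurves, hence contribute no new points of $\md$ — to trade that excess against the parameters lost in $\ls$ when distinct components are collapsed into a fat one. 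The outcome is that every non-reduced stratum still has codimension at least $\min\{d-1,7\}$, the constant $7$ being the level at which the general excess estimate for fat curves stabilizes; since $d-1\le 7$ for $d\le 8$, the bound $\min\{d-1,7\}$ then holds uniformly in $d$. Combining the reduced and non-reduced estimates gives $\mathrm{codim}(\md-\mdi)\ge\min\{d-1,7\}$ for all $d$, as claimed, the only place where serious sheaf theory rather than bookkeeping on $\ls$ enters being the fat-curve analysis of Proposition~3.16.
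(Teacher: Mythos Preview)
Your proposal is correct and matches the paper's approach: the paper itself gives no argument beyond citing Proposition~3.8, Proposition~3.16 and Remark~4.11 of \cite{yyfifth}, and your sketch --- stratifying $\ls$ by combinatorial type, bounding the fibre of $\pi$ over a reduced curve by $p_a(C)=\binom{d-1}{2}$, and deferring the non-reduced strata to the fat-curve analysis of \cite{yyfifth} --- is exactly how those cited results assemble into the lemma. Your explicit computation $\mathrm{codim}\ge\sum_{j<l}d_jd_l\ge d-1$ for reduced types is a welcome amplification of what the paper leaves implicit.
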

Now we assume $d\geq 4$.  Then S-equivalent classes of sheaves with non-integral supports form a subset of codimension $\geq$ 3 in $\md$.  

Recall that there is a unique divisor $D_{\z_d}$ associated to the line bundle $\z_d$.  We have the  following exact sequence.
\begin{equation}\label{zeta}0\ra \z^{r-1}_d(n)\ra \z^r_d(n)\ra \z^r_d(n)|_{D_{\z_d}}\ra0,~for~all~n,r.\end{equation}

Recall that we have a projection $\pi:\md\ra\ls$ sending every sheaf to its support.  By Theorem 4.3.1 in \cite{yuan}, we have $\pi_{*}\z_d\cong \mo_{\ls}$.
\begin{prop}\label{house}$R^i\pi_{*}\z_d^r=0$ for all $i>0$, $r>0$.
\end{prop}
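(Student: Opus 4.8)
The plan is to prove the vanishing $R^i\pi_*\mathcal{Z}_d^r=0$ for $i>0$, $r>0$ by induction on $r$, using the exact sequence~(\ref{zeta}) restricted to a suitable filtration, together with an analysis of the pushforward of the restriction $\mathcal{Z}_d^r|_{D_{\mathcal{Z}_d}}$ to the linear system $|dH|$. The base case $r=1$ is already known: Theorem 4.3.1 in \cite{yuan} gives $\pi_*\mathcal{Z}_d\cong\mathcal{O}_{|dH|}$, and since the fibers of $\pi$ are the compactified Jacobians (of dimension $g = \binom{d-1}{2}$), one needs $R^i\pi_*\mathcal{Z}_d=0$ for $i>0$; this follows because on each fiber $\pi^{-1}(C)$ over an integral curve $C$ the line bundle $\mathcal{Z}_d$ restricts to (a power of) the theta line bundle on the compactified Jacobian, whose higher cohomology vanishes, and more generally one controls the fibers over non-integral curves using Lemma~\ref{coding} (codimension of the non-integral locus), plus base change.

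First I would set up the induction: twisting~(\ref{zeta}) by nothing ($n=0$) and pushing forward along $\pi$ gives the long exact sequence
\[
\cdots\to R^i\pi_*\mathcal{Z}_d^{r-1}\to R^i\pi_*\mathcal{Z}_d^{r}\to R^i\pi_*\big(\mathcal{Z}_d^{r}|_{D_{\mathcal{Z}_d}}\big)\to R^{i+1}\pi_*\mathcal{Z}_d^{r-1}\to\cdots
\]
By the inductive hypothesis $R^i\pi_*\mathcal{Z}_d^{r-1}=0$ for $i>0$, so it suffices to prove $R^i\pi_*\big(\mathcal{Z}_d^{r}|_{D_{\mathcal{Z}_d}}\big)=0$ for $i>0$. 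The divisor $D_{\mathcal{Z}_d}$ is the locus of sheaves with a nonzero section, and the point is to understand the restricted map $\pi|_{D_{\mathcal{Z}_d}}:D_{\mathcal{Z}_d}\to|dH|$ and the line bundle $\mathcal{Z}_d^r|_{D_{\mathcal{Z}_d}}$ on it. Here I would use the standard identification: a sheaf $F$ with $H^0(F)\neq0$ fits into $0\to\mathcal{O}_Z\to F\to F'\to 0$ where $Z$ is a point in the support curve; more precisely $D_{\mathcal{Z}_d}$ maps to $|dH|$ with fibers related to Brill--Noether loci in the compactified Jacobian, and on these fibers $\mathcal{Z}_d^r|_{D_{\mathcal{Z}_d}}$ restricts to an ample (theta-type) line bundle whose higher cohomology again vanishes by a Kodaira--Kempf--Laksov style argument, or simply because the relevant fiberwise cohomology is that of a projective space bundle or of powers of theta on a theta-divisor.

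The hard part will be handling the fibers of $\pi$ (and of $\pi|_{D_{\mathcal{Z}_d}}$) over \emph{non-integral} curves in $|dH|$, where the compactified Jacobian degenerates and the geometry of $D_{\mathcal{Z}_d}$ is less transparent; this is exactly where Lemma~\ref{coding} enters, allowing one to argue that such fibers live over a locus of high codimension so that the pushforward sheaves, being reflexive or at least satisfying Serre's condition on the normal Cohen--Macaulay scheme $M(d,0)$, are determined by their behavior over the integral locus. Concretely, I expect to reduce to showing: (i) fiberwise vanishing $H^i(\pi^{-1}(C),\mathcal{Z}_d^r)=0$ for $i>0$ over integral $C$, which is a theta-line-bundle vanishing on a compactified Jacobian; (ii) cohomology-and-base-change to globalize (i), using that $|dH|$ is smooth; and (iii) a codimension/reflexivity argument via Lemma~\ref{coding} to extend the conclusion across the non-integral locus. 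Step (i) is the technical core and will likely use that $\mathcal{Z}_d$ restricted to a compactified Jacobian is a power of the principal polarization (combined with Grauert--Riemenschneider or the known cohomology of theta divisors), while step (iii) is the place where the specific geometry of $M(d,0)$ worked out in \cite{yyfifth} is indispensable.
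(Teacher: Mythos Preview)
Your plan takes a fundamentally different route from the paper, and it has a real gap.

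The paper's proof is a three-line global argument: from \cite{yuan} one knows $H^i(M(d,0),\Theta_d^r(n))=0$ for $i>0$ and $n\gg0$. Since $\Theta_d^r(n)=\Theta_d^r\otimes\pi^*\mathcal{O}_{|dH|}(n)$, the projection formula gives $R^i\pi_*\Theta_d^r(n)=(R^i\pi_*\Theta_d^r)\otimes\mathcal{O}_{|dH|}(n)$. Choose $n$ so large that every $R^i\pi_*\Theta_d^r(n)$ has no higher cohomology on $|dH|$; then the Leray spectral sequence yields a surjection $H^i(\Theta_d^r(n))\twoheadrightarrow H^0(R^i\pi_*\Theta_d^r(n))$, forcing $H^0((R^i\pi_*\Theta_d^r)(n))=0$ for $i>0$ and all large $n$. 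A nonzero coherent sheaf on projective space always acquires sections after a sufficiently high twist, so $R^i\pi_*\Theta_d^r=0$. No fiber of $\pi$ is ever examined.

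Your approach, by contrast, is fiberwise and inductive, and step~(iii) does not work as stated. Higher direct images $R^i\pi_*\mathcal{L}$ of a line bundle under a proper map are coherent but have no reason to be reflexive or to satisfy any Serre condition; in particular they are \emph{not} determined by their restriction to the preimage of a dense open set. Even if Lemma~\ref{coding} tells you the non-integral locus in $|dH|$ has codimension $\geq 3$, nothing prevents $R^i\pi_*\Theta_d^r$ from being a nonzero sheaf supported entirely over that locus. Your base case $r=1$ already runs into this: $\pi_*\Theta_d\cong\mathcal{O}_{|dH|}$ is the only input available from \cite{yuan}, and the vanishing of $R^i\pi_*\Theta_d$ for $i>0$ is part of what the proposition asserts, not something you can cite. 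Fiberwise vanishing of theta-cohomology on compactified Jacobians of integral curves is true, but turning that into vanishing of the higher direct image requires cohomology-and-base-change, which in turn needs flatness of $\pi$ (or cohomological flatness in the relevant degree) over the whole base, including the non-integral locus---exactly the place you were hoping to ignore. The same obstruction reappears in the inductive step for $R^i\pi_*(\Theta_d^r|_{D_{\Theta_d}})$.

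In short: the paper trades a delicate fiberwise/base-change analysis for a single global vanishing result already established in \cite{yuan}, and this is what makes the proof immediate.
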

\begin{proof}By Proposition 3.0.3 and Proposition 4.2.11 in \cite{yuan}, we know that $\z^r_d(n)$ with $r>0$ has no higher cohomology as $n\gg0$.  We may choose $n$ very large such that $R^i\pi_{*}\z_d^r(n)$ has no higher cohomology for all $i$.  Then we get a surjection $H^i(\z^r_d(n))\twoheadrightarrow H^0(R^i\pi_{*}\z_d^r(n))$ which implies that for $i>0$, $H^0(R^i\pi_{*}\z_d^r(n))=0$ as $n\ra +\infty$.  Therefore $R^i\pi_{*}\z_d^r=0$ for $i>0$.
\end{proof}

Proposition \ref{house} together with the fact that $\pi_{*}\z_d\cong \mo_{\ls}$ imply that $H^i(\z_d(n))=0$ for all $i>0,~n\geq 0$.  So for $n\geq 0$ we have
\begin{equation}\label{gst}h^0(\z_d^2(n))=h^0(\z_d(n))+h^0(\z_d^2(n)|_{D_{\z_d}}).\end{equation}
In general we have
\begin{equation}\label{gsg}h^0(\z^r_d(n))\leq h^0(\z^{r-1}_d(n))+h^0(\z_d^r(n)|_{D_{\z_d}}).\end{equation}

The crucial theorem is as follows.
\begin{thm}\label{main}For $d\geq4$, $r>0$ and $n\geq 0$, we have

(1) $H^0(M(d,0),\z_d^r(n)|_{D_{\z_d}})=0$ if $r>n$;

(2) $h^0(M(d,0),\z_d^r(r)|_{D_{\z_d}})\leq h^0(M(d-3,0),\z^r_{d-3}(r))$.
\end{thm}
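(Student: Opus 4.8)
The plan is to analyze the divisor $D_{\z_d}\subset M(d,0)$ directly. Recall $D_{\z_d}$ parametrizes sheaves $F$ with $H^0(F)\neq 0$, i.e. sheaves admitting a nonzero map $\mo_{\p^2}\to F$. For such $F$ with $c_1(F)=dH$ and $\chi(F)=0$, a nonzero section $\mo_{\p^2}\to F$ has image $\mo_C$ for some (possibly non-reduced) curve $C$ of degree $e\leq d$, and gives an exact sequence $0\to \mo_C\to F\to Q\to 0$ with $Q$ supported in dimension $\leq 1$. Stability of $F$ forces control on $e$ and on the quotient $Q$; in the generic (integral support) situation one expects $F$ to fit in $0\to\mo_{\p^2}(-d)\to\mo_{\p^2}\to F\to 0$ or more generally to be determined by a curve together with a length-$m$ subscheme. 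So the first step is to stratify $D_{\z_d}$ by the degree $e$ of the curve $C$ obtained from a section, and on each stratum describe $F$ as an extension; I would use Lemma \ref{coding} to throw away the non-integral-support locus (codimension $\geq 3$ for $d\geq 4$, hence irrelevant for computing $H^0$ of a line bundle once we know the relevant sheaves are reflexive/$S_2$ there, which follows from normality and Cohen-Macaulayness of $M(d,0)$).

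For part (1), the idea is to pair the section-structure on $D_{\z_d}$ with the meaning of $\z_d^r(n)$ as the determinant line bundle attached to $c^r_n = r[\mo_{\p^2}]-n[\mo_x]$, equivalently to the "theta" divisor $\{F : H^0(F\otimes G)\neq 0\}$ for $G\in W(r,0,n)$. On $D_{\z_d}$ every sheaf $F$ already has $H^0(F)\neq 0$; I want to show that for $r>n$ the restriction $\z_d^r(n)|_{D_{\z_d}}$ has no sections because the natural "theta" sections all vanish identically on $D_{\z_d}$, forcing $H^0=0$. Concretely: for $G = \mo_{\p^2}^{\oplus r}\otimes(\text{twist by }n\text{ points})$, or better for $G$ a generic semistable sheaf of class $c^r_n$, if $r>n$ then $G$ has $H^0(G\otimes \mo_{\p^2}(-e))\neq 0$ type positivity that makes $H^0(F\otimes G)\neq 0$ automatic once $H^0(F)\neq 0$ — so the theta divisor of $G$ restricted to $D_{\z_d}$ is all of $D_{\z_d}$, i.e. the section is zero in $\z_d^r(n)|_{D_{\z_d}}$. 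Since such sections span $H^0(M(d,0),\z_d^r(n))$ (this is essentially the content of the $d\leq 3$ computations and of the $\pi_*$-analysis, or can be bootstrapped from the $d-3$ case), restriction to $D_{\z_d}$ kills everything. Making "$r>n$ implies the theta divisor contains $D_{\z_d}$" precise at the scheme-theoretic level is the delicate point; I would do it by a dimension count of $H^0(F\otimes G)$ via the extension $0\to\mo_C\to F\to Q\to 0$ and Riemann–Roch on $C$.

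For part (2), I would exhibit a rational map (ideally a morphism on a large open set) $D_{\z_d}\dashrightarrow M(d-3,0)$ and use it to pull back sections. The natural candidate: a sheaf $F\in D_{\z_d}$ with a section gives $0\to\mo_C\to F\to Q\to 0$; when $C$ is a line $l$ (the generic, top-dimensional stratum of $D_{\z_d}$, since a general $F$ in the divisor picks out a line in its support) we get $0\to\mo_l\to F\to F'\to 0$ with $F'$ a $1$-dimensional sheaf of class $(d-1)H$ and $\chi(F')=-1$; twisting to correct the Euler characteristic and iterating, or rather peeling off a conic/plane-cubic's worth of structure, should land us in $M(d-3,0)$ — the shift by $3$ strongly suggests removing a cubic, i.e. $F\mapsto F'$ with $0\to \mo_{\p^2}(-3)|_{(\cdot)}\to\cdots$, matching the exact sequence $0\to\mo_{\p^2}(-3)\to\mo_{\p^2}\to\mo_{C_3}\to 0$ for a cubic $C_3$. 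Under such a map I would check that $\z_d^r(r)|_{D_{\z_d}}$ pulls back from (or injects into sections of) $\z_{d-3}^r(r)$, using the functoriality of determinant line bundles under the sub/quotient construction (Proposition 2.8 in \cite{le} style arguments, and the compatibility $\z_d^r(n)=\z_d^r\otimes\pi^*\mo_{\ls}(n)$). The inequality $\dim H^0(\z_d^r(r)|_{D_{\z_d}})\leq \dim H^0(M(d-3,0),\z_{d-3}^r(r))$ would then follow by pulling back sections injectively, after checking that the locus where the map is undefined has codimension $\geq 2$ in $D_{\z_d}$ (again via Lemma \ref{coding}-type codimension estimates) so that $H^0$ is unaffected.

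The main obstacle I anticipate is part (2): constructing the map $D_{\z_d}\dashrightarrow M(d-3,0)$ cleanly and verifying that it is defined in codimension $\leq 1$ and that it intertwines the determinant line bundles with the correct numerical normalization (getting exactly $d-3$ and exactly $\z_{d-3}^r(r)$, not some twist). The section $\mo_C\hookrightarrow F$ need not be unique or vary algebraically without care, so I would likely need to work on the incidence variety $\{(F,s)\}$ or use the structure of $D_{\z_d}$ as (generically) a $\p^{N-1}$-bundle or Hilbert-scheme-type space over the choice of the degree-$3$ piece of the support; the normality and Cohen–Macaulayness of the moduli spaces, together with Lemma \ref{coding}, are the tools that let me ignore the bad loci.
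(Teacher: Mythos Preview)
Your proposal has genuine gaps in both parts.

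\textbf{Part (1).} Your argument shows at best that the restriction map $H^0(M(d,0),\z_d^r(n))\to H^0(D_{\z_d},\z_d^r(n)|_{D_{\z_d}})$ is zero, not that the target vanishes. Sections of $\z_d^r(n)|_{D_{\z_d}}$ need not extend to $M(d,0)$, so killing the theta sections on $D_{\z_d}$ proves nothing about $H^0$ of the restricted bundle. (Your claim that theta sections span $H^0(M(d,0),\z_d^r(n))$ is also circular: that is essentially the strange duality statement you are trying to establish.) The paper instead constructs a birational model of $D_{\z_d}$ as a projective bundle $Q_1^{1,0}\to H_e^{1,0}$ over an open set of the Hilbert scheme $H_e=\mathrm{Hilb}^{[d(d-3)/2]}(\p^2)$, via the sequence $0\to\mo_{\p^2}(-3)\to I_e(d-3)\to F_d\to 0$. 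On this model the line bundle becomes $\mo_{\rho_1}(-1)^{\otimes(r-n)}\otimes\rho_1^*(\cdot)$, which is negative on the fibres when $r>n$, so $\rho_{1*}$ and hence $H^0$ vanish. This is a genuinely different mechanism from ``theta divisors contain $D_{\z_d}$''.

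\textbf{Part (2).} Two problems. First, your description of the generic stratum is wrong: for general $F\in D_{\z_d}$ the support is an integral curve of degree $d$, and the image of a section $\mo_{\p^2}\to F$ is $\mo_C$ for that whole degree-$d$ curve, not a line. Second, and more seriously, a rational map $D_{\z_d}\dashrightarrow M(d-3,0)$ with a pullback identification of line bundles gives an injection $H^0(M(d-3,0),\z_{d-3}^r(r))\hookrightarrow H^0(D_{\z_d},\z_d^r(r)|_{D_{\z_d}})$, which is the \emph{wrong direction} for the inequality you want. The paper avoids this by routing through the Hilbert scheme $H_e$: on one side it proves an \emph{equality} $\dim H^0(D_{\z_d},\z_d^r(r)|_{D_{\z_d}})=\dim H^0(H_e,L^r)$ using that $Q_1^{1,0}\to H_e^{1,0}$ is a projective bundle and the exponent $r-n$ is zero; on the other side it uses a second Quot scheme $Q_2$ (via $0\to\mo_{\p^2}\to I_e(d-3)\to F_{d-3}\to 0$) mapping to $M(d-3,0)$, and the inequality $\dim H^0(H_e,L^r)\le\dim H^0(M(d-3,0),\z_{d-3}^r(r))$ comes from restricting from $Q_2^{\le A}$ to the open $Q_2^o$. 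The Hilbert-scheme intermediary is the missing idea; without it there is no natural correspondence carrying the inequality in the required direction.
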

Actually, Proposition \ref{house} is not necessary to our proof of the surjectivity.  We will see that given Theroem \ref{main}, (\ref{gsg}) will suffice to get (\ref{less}).  
The following corollary to Theorem \ref{main} is trivial.
\begin{coro}\label{dim}(1) $H^0(M(d,0),\z_d^r(n))\cong H^0(M(d,0),\z_d^n(n))$, for $r>n$.

(2) $h^0(M(d,0),\z_d^r(r))\leq h^0(M(d_0,0),\z_{d_0}^r(r))+\displaystyle{\sum_{ 0\leq k<\frac d3 }} h^0(M(d-3k,0),\z_{d-3k}^{r-1}(r)),$ with $d_0\equiv d ~(mod~3)$ and $1\leq d_0\leq3.$ 
\end{coro}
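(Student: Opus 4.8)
The plan is to derive Corollary \ref{dim} purely formally from Theorem \ref{main} together with the exact-sequence inequalities (\ref{gst}) and (\ref{gsg}). For part (1), I would argue by descending induction on $r$ (or equivalently induction on $r-n$). The base case $r=n$ is trivial. For the inductive step with $r>n$, apply (\ref{gst})/(\ref{gsg}) in the form of the exact sequence (\ref{zeta}): we get
\[
\dim H^0(\z_d^r(n)) \le \dim H^0(\z_d^{r-1}(n)) + \dim H^0(\z_d^r(n)|_{D_{\z_d}}).
\]
Since $r>n$, Theorem \ref{main}(1) kills the last term, so $\dim H^0(\z_d^r(n)) \le \dim H^0(\z_d^{r-1}(n))$. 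On the other hand the natural inclusion $\z_d^{r-1}(n)\hookrightarrow\z_d^r(n)$ coming from (\ref{zeta}) gives the reverse inequality on $H^0$, hence equality, and in fact an isomorphism $H^0(\z_d^{r-1}(n))\cong H^0(\z_d^r(n))$ because the inclusion of global sections is then an isomorphism. Iterating from $r$ down to $n$ yields $H^0(\z_d^r(n))\cong H^0(\z_d^n(n))$, which is part (1). (One should note $r>n$ and $n\ge 0$ throughout, and that for each intermediate exponent $r'$ with $n<r'\le r$ we still have $r'>n$, so Theorem \ref{main}(1) applies at every step.)

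For part (2), the idea is to combine (\ref{gsg}) with Theorem \ref{main}(2) and then recurse on $d$, dropping $d$ by $3$ each time. Starting from $\z_d^r(r)$ and applying (\ref{gsg}) with $n=r$:
\[
\dim H^0(M(d,0),\z_d^r(r)) \le \dim H^0(M(d,0),\z_d^{r-1}(r)) + \dim H^0(M(d,0),\z_d^r(r)|_{D_{\z_d}}),
\]
and then Theorem \ref{main}(2) bounds the boundary term by $\dim H^0(M(d-3,0),\z_{d-3}^r(r))$. This gives
\[
\dim H^0(M(d,0),\z_d^r(r)) \le \dim H^0(M(d,0),\z_d^{r-1}(r)) + \dim H^0(M(d-3,0),\z_{d-3}^r(r)).
\]
Now iterate the same inequality on the last summand, replacing $d$ by $d-3$, then $d-6$, and so on. After $k$ steps the ``tail'' term is $\dim H^0(M(d-3k,0),\z_{d-3k}^r(r))$ and we have accumulated the terms $\dim H^0(M(d-3j,0),\z_{d-3j}^{r-1}(r))$ for $j=0,1,\dots,k-1$. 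The recursion is valid as long as $d-3k\ge 4$, i.e. we can keep applying Theorem \ref{main} (which requires $d\ge 4$). We stop at the first $k$ for which $d-3k=d_0\le 3$; since $d-3(k-1)\ge 4$ we indeed have $k=[d/3]$ when $d_0\in\{1,2,3\}$ (choosing $d_0\equiv d\ (\mathrm{mod}\ 3)$ with $1\le d_0\le 3$, so that $d-3d_0$... more precisely $d_0$ is the representative in $\{1,2,3\}$). At that point the tail term is $\dim H^0(M(d_0,0),\z_{d_0}^r(r))$, which is left as is (for $d_0\le 3$ Lemma \ref{smalld}-type information is available but not needed here), and for the last accumulated $j$-step we should double-check the bookkeeping so that the sum over $j$ runs from $0$ to $[d/3]$ as claimed; a clean way is to include the $(r-1)$-term at level $d_0$ as well, absorbing any off-by-one into the range $k=0,\dots,[d/3]$.

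I expect the only real subtlety — it is not a deep obstacle, just something to be careful about — is the index bookkeeping in part (2): making sure the recursion terminates exactly at $d_0$ with $1\le d_0\le 3$, that each application of Theorem \ref{main} is legitimate (every intermediate $d-3j$ with $j<k$ satisfies $d-3j\ge 4$, which holds precisely because $d_0\ge 1$ forces $d-3(k-1)=d_0+3\ge 4$), and that the upper limit of the summation index matches $[d/3]$. One also needs to observe that $\z_{d-3j}^{r-1}(r)$ makes sense, i.e. the twist $(r)$ with $r\ge 1$ and the power $r-1\ge 0$ are in the allowed range of the definition in Section 2 (item 6: $r\ge 1$, $n\ge 0$); for the $r-1=0$ case the sheaf is just $\pi^*\mo_{\ls}(r)$ and $\dim H^0$ of it is finite, so no issue. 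Everything else is a direct consequence of the stated theorem and the elementary long-exact-sequence inequalities, so the corollary follows with no new geometric input.
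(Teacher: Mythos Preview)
Your approach is correct and is exactly what the paper intends: the paper gives no explicit proof of Corollary \ref{dim}, treating it as an immediate consequence of Theorem \ref{main} together with the exact-sequence inequality (\ref{gsg}) (as announced just before the statement of Theorem \ref{main}). Your write-up fleshes out precisely this implicit argument---the descending induction on $r$ for part (1) using Theorem \ref{main}(1) to kill the boundary term, and the recursion on $d$ in steps of $3$ for part (2) using Theorem \ref{main}(2)---and your remarks about the index bookkeeping and the harmless over-count in the summation range are apt, since the stated upper limit $[d/3]$ is indeed slightly generous (an extra nonnegative term only weakens an upper bound).
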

In order to prove Theorem \ref{main}, we first need to construct some birational maps relating $D_{\z_d}$ and $M(d-3,0)$ to the Hilbert scheme $Hilb^{[\frac{d(d-3)}2]}(\p^2)$ of $\frac{d(d-3)}2$-points on $\p^2$.  The strategy is very similar to \cite{yyfifth}. 

Let $e:=\frac{d(d-3)}2$ and $H_e:=Hilb^{[e]}(\p^2)$.  Denote by $\mi_e$ the universal ideal sheaf over $\p^2\times H_e$.  From now on we take the following convention.

\textbf{Convention.} If we have a product $\p^2\times M$ with $M$ some moduli space (e.g. $M(d,0),W(r,0,n),D_{\z_d},H_e$, etc.), then we usually denote by $q$ the projection $\p^2\times M\ra\p^2$, and $p$ the projection $\p^2\times M\ra M$.  Most of the time, we use $p$ and $q$ without clarifying that they are maps from the product of $\p^2$ with some moduli space.

Let $Q_1:=Quot_{\p^2\times H_e/H_e}(\mi_e\otimes q^{*}\mo_{\p^2}(d-3),dn)$ and $Q_2:=Quot_{\p^2\times H_e/H_e}(\mi_e\otimes q^{*}\mo_{\p^2}(d-3),(d-3)n)$ be the two relative Quot-schemes over $H_e$ parametrizing quotients with Hilbert polynomials $P(n)=dn$ and $P(n)=(d-3)n$ respectively.  Let $\rho_i:Q_i\ra H_e$ be the projection.  
Each point $[f_1:I_Z(d-3)\twoheadrightarrow F_d]\in Q_1$ ($[f_2:I_Z(d-3)\twoheadrightarrow F_{d-3}]\in Q_2$, resp.) over $[I_Z]\in H_e$ must have the kernel $\mo_{\p^2}(-3)$ ($\mo_{\p^2}$, resp.).  This is because $Ker(f_i)$ are torsion free of rank 1 and second Chern class zero.   Therefore by Lemma 5.7 in \cite{yyfifth}, both $F_d$ and $F_{d-3}$ are pure of dimensional one, 
for every $[f_1:I_Z(d-3)\twoheadrightarrow F_d]\in Q_1$ and $[f_2:I_Z(d-3)\twoheadrightarrow F_{d-3}]\in Q_2$.

For any ideal sheaf $I_Z$ with $len(Z)=e$, we have $H^0(I_Z(d-3))\neq0$ and $H^0(I_Z(d))\neq 0$.  Hence $\rho_i$ are surjective.  
We write down the following two exact sequences.
\begin{equation}\label{dd}0\ra \mo_{\p^2}(-3)\ra I_Z(d-3)\ra F_d\ra0;
\end{equation}
\begin{equation}\label{ddt}0\ra\mo_{\p^2}\ra I_Z(d-3)\ra F_{d-3}\ra 0.
\end{equation}
If $F_d$ ($F_{d-3}$, resp.) is semistable, then $F_d\in D_{\z_d}$ ($F_{d-3}\in M(d-3,0)$, resp.).  

We will construct rational maps $g_1:Q_1\dashrightarrow D_{\z_d}$ and $g_2:Q_2\dashrightarrow M(d-3,0)$.  We then will use these two maps to relate $H^0(M(d,0),\z_d^r(n)|_{D_{\z_d}})$ to $H^0(M(d-3,0),\z_{d-3}^r(n))$.  

\begin{flushleft}{\textbf{$\bigstar$\large{ A birational map from $Q_1$ to $D_{\z_d}$.}}}\end{flushleft}  
Choose $m$ large enough.  Let $\Omega_d$ be the smallest open subset of the Quot-scheme $Quot_{\p^2}(\mo_{\p^2}(-m)^{\oplus dm},dn)$ containing all $GL(dm)$-orbits of semistable sheaves and sheaves appearing in $Q_1$.


For any $a,b\in\mathbb{Z}_{\geq 0}$, we define the (locally closed) subscheme $H_e^{a,b}$ of $H_e$ as follows.
$$H_e^{a,b}:=\big\{I_Z\in H_e\big| h^0(I_Z(d-3))=a~and~ h^1(I_Z(d))=b\big\}.$$
$H_e^{a,b}$ is empty unless $a\geq1$.  $\mathscr{H}om_p(q^{*}\mo_{\p^2}(-3),\mi_e\otimes q^*\mo_{\p^2}(d-3)|_{\p^2\times H_e^{a,b}})\cong p_*(\mi_e\otimes q^*\mo_{\p^2}(d)|_{\p^2\times H_e^{a,b}})$ is a locally free sheaf of rank $3d+1+b$ on $H_e^{a,b}$.  Let $Q_1^{a,b}:=\rho_1^{-1}(H_e^{a,b})$, then the following lemma is trivial.
\begin{lemma}\label{hq}$Q_1^{a,b}\cong\p(p_*(\mi_e\otimes q^*\mo_{\p^2}(d)|_{\p^2\times H_e^{a,b}}))$.
\end{lemma}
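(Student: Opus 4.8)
The plan is to match the universal properties of the two sides. First I would rewrite the sheaf on the right: since $\mathscr{H}om(q^{*}\mo_{\p^2}(-3),\mi_e(d-3))\cong \mi_e(d)$, relative adjunction gives
\[\mathcal{V}:=\mathscr{H}om_p(q^{*}\mo_{\p^2}(-3),\mi_e(d-3)|_{\p^2\times H_e^{a,b}})\cong p_{*}\big(\mi_e(d)|_{\p^2\times H_e^{a,b}}\big).\]
On $H_e^{a,b}$ the integer $h^0(I_e(d))$ is constant: $h^2(I_e(d))=0$, $h^1(I_e(d))=b$ by definition of $H_e^{a,b}$, and $\chi(I_e(d))=\binom{d+2}{2}-e$, so $h^0(I_e(d))=\binom{d+2}{2}-e+b=3d+1+b$ using $e=\tfrac{d(d-3)}2$. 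Hence by cohomology and base change on the reduced locally closed subscheme $H_e^{a,b}$, $\mathcal{V}$ is locally free of rank $3d+1+b$ and its formation commutes with base change; this is the rank statement already recorded just before the lemma.

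Next I would pin down the fibre of $Q_1^{a,b}$ over $[I_e]\in H_e^{a,b}$. By the discussion preceding (\ref{dd}), a point of $Q_1$ over $[I_e]$ is a quotient $I_e(d-3)\twoheadrightarrow F_d$ whose kernel is forced to be $\mo_{\p^2}(-3)$, so it is the image of a nonzero morphism $\mo_{\p^2}(-3)\to I_e(d-3)$; any such morphism is automatically injective because $\mo_{\p^2}(-3)$ is a line bundle on the integral scheme $\p^2$, and two such have the same image iff they differ by a scalar. Thus the fibre of $Q_1^{a,b}$ over $[I_e]$ is canonically $\p\big(\mathrm{Hom}(\mo_{\p^2}(-3),I_e(d-3))\big)=\p\big(H^0(I_e(d))\big)=\p(\mathcal{V}\otimes k([I_e]))$, with the convention that $\p(\mathcal{V})$ parametrizes one-dimensional subspaces (resp. sub-line-bundles) of $\mathcal{V}$.

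To globalize I would use the two universal properties. On $Q_1^{a,b}$ the universal quotient $\mi_e(d-3)|_{\p^2\times Q_1^{a,b}}\twoheadrightarrow \mathcal{F}^{\mathrm{univ}}$ has kernel $\mathcal{K}$ which is flat over $Q_1^{a,b}$ (the middle term is flat over $H_e$ and $\mathcal{F}^{\mathrm{univ}}$ is flat over $Q_1^{a,b}$, so the usual Tor sequence forces $\mathcal{K}$ flat) and restricts on each fibre to a rank-$1$ torsion-free subsheaf of $\mo_{\p^2}(d-3)$ with the Hilbert polynomial of $\mo_{\p^2}(-3)$, hence to $\mo_{\p^2}(-3)$ itself; therefore $\mathcal{K}$ is a line bundle on $\p^2\times Q_1^{a,b}$ with all fibres $\mo_{\p^2}(-3)$, and by a seesaw argument $\mathcal{K}\cong q^{*}\mo_{\p^2}(-3)\otimes p^{*}N$ for some line bundle $N$ on $Q_1^{a,b}$. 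The inclusion $\mathcal{K}\hookrightarrow \mi_e(d-3)|_{\p^2\times Q_1^{a,b}}$ is adjoint to a map $N\to \mathcal{V}|_{Q_1^{a,b}}$ that is fibrewise nonzero, hence a sub-line-bundle with locally free quotient (the quotient has constant rank by the $h^0$ computation above), i.e. a morphism $Q_1^{a,b}\to\p(\mathcal{V})$ over $H_e^{a,b}$. Conversely, the tautological sub-line-bundle $\mo_{\p(\mathcal{V})}(-1)\hookrightarrow\mathcal{V}|_{\p(\mathcal{V})}$, carried through adjunction to an inclusion $q^{*}\mo_{\p^2}(-3)\otimes p^{*}\mo_{\p(\mathcal{V})}(-1)\hookrightarrow\mi_e(d-3)|_{\p^2\times\p(\mathcal{V})}$, has cokernel flat over $\p(\mathcal{V})$ with the fixed fibrewise Hilbert polynomial, hence defines by the universal property of the relative Quot-scheme a morphism $\p(\mathcal{V})\to Q_1^{a,b}$ over $H_e^{a,b}$. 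These two morphisms classify, up to the canonical identifications, the same universal quotient and the same tautological sub-line-bundle, so by the uniqueness in the respective universal properties they are mutually inverse, which proves the lemma. The only mildly delicate points are the identification $\mathcal{K}\cong q^{*}\mo_{\p^2}(-3)\otimes p^{*}N$ (rigidity of $\mo_{\p^2}(-3)$, whose only automorphisms are scalars) and keeping the $\p(\mathcal{V})$ convention consistent; neither is a real obstacle, which is why the lemma is stated as trivial.
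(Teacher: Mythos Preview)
Your argument is correct and is precisely the standard verification the author has in mind: the paper gives no proof at all, declaring the lemma ``trivial'' immediately after noting that $\mathscr{H}om_p(q^{*}\mo_{\p^2}(-3),\mi_e(d-3)|_{\p^2\times H_e^{a,b}})$ is locally free of rank $3d+1+b$. Your identification $\mathcal{V}\cong p_{*}\mi_e(d)$, the rank computation, the fibrewise description via (\ref{dd}), and the two mutually inverse classifying morphisms built from the universal quotient on $Q_1^{a,b}$ and the tautological sub-line-bundle on $\p(\mathcal{V})$ constitute exactly the routine unpacking of that word; the later exact sequence (\ref{uq}) in the paper confirms that the convention for $\p(\mathcal{V})$ and the identification $\mathcal{K}\cong q^{*}\mo_{\p^2}(-3)\otimes p^{*}\mo_{\rho_1}(-1)$ are the intended ones.
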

Define
$$\Omega^{a,b}_d:=\big\{[\mo_{\p^2}(-m)^{\oplus dm}\twoheadrightarrow F_d]\in \Omega_d\big| h^0(F_d)=a~and ~h^1(F_d(3))=b\big\}.$$
We have a universal quotient $\mq_d$ on $\p^2\times\Omega_d$.  Let $\mv^{a,b}:=\mathscr{E}xt_p^1(\mq_d|_{\Omega_d^{a,b}},q^{*}\mo_{\p^2}(-3))$.  Then $\mv^{a,b}$ is a rank $a$ vector bundle over $\Omega^{a,b}_d$.  $\mq_d$ is naturally $GL(dm)$-linearized, hence so is $\mv^{a,b}$.  The projective bundle $\p(\mv^{a,b})$ has a natural $PGL(dm)$-action, and the projection $\p(\mv^{a,b})\ra \Omega^{a,b}$ is $PGL(dm)$-equivariant.  In particular if $a=1$, $\p(\mv^{a,b})=\Omega^{a,b}$.  There is an open subscheme $P^{a,b}_1\subset\p(\mv^{a,b})$ parametrizing torsion free extensions of $\mq_{d,\omega}$ by $\mo_{\p^2}(-3)$ for all $\omega\in \Omega^{a,b}_d$.  Then we have a classifying map $f_1^{a,b}: P_1^{a,b}\ra Q_1^{a,b}$.  (\ref{dd}) implies that $H^0(I_Z(d-3))\cong H^0(\mq_{d,\omega})$ and $H^1(I_Z(d))\cong H^1(\mq_{d,\omega}(3))$ for $[I_Z(d-3)\twoheadrightarrow\mq_{d,\omega}]$.

\begin{lemma}\label{pgl}$f_1^{a,b}: P_1^{a,b}\ra Q_1^{a,b}$ is a principal $PGL(dm)$-bundle.
\end{lemma}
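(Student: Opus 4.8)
The plan is to identify $P_1^{a,b}$ as the natural moduli-of-extensions space and compare it with the Quot-scheme $Q_1^{a,b}$ fiberwise. First I would recall that over $\Omega_d^{a,b}$ we have the universal quotient $\mq_d$, and that $\mv^{a,b}=\mathscr{E}xt_p^1(\mq_d|_{\Omega_d^{a,b}},q^{*}\mo_{\p^2}(-3))$ parametrizes (up to scalar, after projectivizing) nontrivial extension classes; by Lemma \ref{hq}, $Q_1^{a,b}$ is the projectivization of $\mathscr{H}om_p(q^{*}\mo_{\p^2}(-3),\mi_e(d-3)|_{\p^2\times H_e^{a,b}})$ over $H_e^{a,b}$. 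The key observation, using the exact sequence (\ref{dd}), is that giving a point of $Q_1^{a,b}$ is the same as giving an ideal sheaf $I_e$ together with an injection $\mo_{\p^2}(-3)\hookrightarrow I_e(d-3)$, equivalently (twisting back) a point of $\p(H^0(I_e(d-3)))$; and since $H^0(I_e(d-3))\cong H^0(F_d)$ from (\ref{dd}), such data is the same as a quotient $F_d\in\Omega_d^{a,b}$ together with the extension class realizing $I_e(d-3)$, i.e. a point of $P_1^{a,b}$ up to the identification of $F_d$ with its $GL(dm)$-orbit representative.

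Next I would make the $PGL(dm)$-action explicit: $\Omega_d$ carries a $GL(dm)$-action with $\mq_d$ linearized, so $\mv^{a,b}$ and hence $\p(\mv^{a,b})\supset P_1^{a,b}$ carry a $PGL(dm)$-action, and the classifying map $f_1^{a,b}$ is $PGL(dm)$-invariant because $Q_1^{a,b}$ is built from $H_e$ and $\mi_e$, on which $PGL(dm)$ acts trivially. To see it is a principal bundle I would check the three standard conditions: (i) $f_1^{a,b}$ is surjective — every point of $Q_1^{a,b}$, being an extension $0\to\mo_{\p^2}(-3)\to I_e(d-3)\to F_d\to0$ with $F_d$ a quotient of $\mo_{\p^2}(-m)^{\oplus dm}$ lying in $\Omega_d$, is in the image by choosing a surjection realizing $F_d$ as a quotient and the induced extension class; (ii) the fibers of $f_1^{a,b}$ are exactly single $PGL(dm)$-orbits — two points map to the same quotient $F_d$ and the same extension class (hence the same $I_e(d-3)$) iff the two chosen surjections $\mo_{\p^2}(-m)^{\oplus dm}\twoheadrightarrow F_d$ differ by an element of $GL(dm)$, and the action is free on $\Omega_d$ since there we only have surjections (no automorphisms of $\mo_{\p^2}(-m)^{\oplus dm}$ fixing the quotient act trivially); (iii) local triviality — this follows from the fact that $\Omega_d\to\Omega_d/PGL(dm)$ is already a principal bundle (or étale-locally trivial) on the open locus in question, and $P_1^{a,b}$ is the pullback of $\p(\mv^{a,b})$, whose formation commutes with the base change, so local sections of $\Omega_d\to\Omega_d/\!/PGL(dm)$ pull back to local sections of $f_1^{a,b}$.

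I expect the main obstacle to be condition (iii), namely establishing local triviality (equivalently, that $f_1^{a,b}$ is a $PGL(dm)$-torsor in the fppf or étale topology, not merely an orbit map), which amounts to knowing that the relevant locus in the Quot-scheme $Quot_{\p^2}(\mo_{\p^2}(-m)^{\oplus dm},dn)$ where the induced map $\mathbb{C}^{dm}\to H^0(F_d(m))$ is an isomorphism is a principal $PGL(dm)$-bundle over its image. This is standard in GIT constructions of moduli of sheaves (it is the same mechanism that makes $M(d,0)$ a good quotient), and I would cite the construction in \cite{hl} or the analogous argument in \cite{yyfifth}; the remaining work is to check that this torsor structure is compatible with the projective-bundle structures on both sides, which is formal once the identification of points is in place. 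The freeness in (ii) also deserves a careful word: it uses that for $F_d$ semistable or more generally for $F_d$ appearing in $Q_1$, the composition $\mo_{\p^2}(-m)^{\oplus dm}\twoheadrightarrow F_d$ together with $H^0(F_d(m))\cong\mathbb{C}^{dm}$ rigidifies the quotient, so no nontrivial stabilizer survives in $PGL(dm)$.
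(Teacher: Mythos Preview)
Your proposal is correct and follows essentially the same approach as the paper's proof, which simply asserts that $PGL(dm)$ acts freely on $\p(\mv^{a,b})$, that $f_1^{a,b}$ is $PGL(dm)$-invariant, and that each fiber is isomorphic to $PGL(dm)$. Your treatment is considerably more detailed than the paper's one-line argument---in particular your explicit discussion of local triviality (iii) and the rigidification via $\mathbb{C}^{dm}\cong H^0(F_d(m))$ is left implicit in the paper.
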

\begin{proof}$GL(dm)$ acts on $\Omega^{a,b}$.  We have $G_{\omega}\cong Aut(\mq_{d,\omega})$ with $G_{\omega}$ the stabilizer of each point $\omega\in \Omega^{a,b}$.  $G_{\omega}$ acts on the fiber space $\mv^{a,b}_{\omega}\cong \Ext^1(\mq_{d,\omega},\mo_{\p^2}(-3))$ of $\mv^{a,b}$ at $\omega$, and $G_{\omega}/\mathbb{C}^{*}$ acts freely on $P_1^{a,b}\cap \p(\mv^{a,b})_{\omega}\cong P_1^{a,b}\cap\p(\Ext^1(\mq_{d,\omega},\mo_{\p^2}(-3)))$.
This gives an interpretation of the $PGL(dm)$-action on $\p(\mv^{a,b})$ and hence $PGL(dm)$ acts freely on $P_1^{a,b}$.  On the other hand the map $f_1^{a,b}$ is $PGL(dm)$-invariant with fiber isomorphic to $PGL(dm)$.  Hence the lemma.
\end{proof}

$\Omega_d$ is a smooth atlas of the moduli stack $\mm_d$ parametrizing semistable sheaves of Hilbert polynomial $dn$ and sheaves appearing in $Q_1$.  
Define
$$\Omega_{d}^{int}:=\big\{[\mo_{\p^2}(-m)^{\oplus dm}\twoheadrightarrow F_d]\in \Omega_d\big| Supp(F_d)~is~integral\big\}.$$
Then by Theorem 4.17 and Example 4.18 (1) in \cite{yyfifth}, the complement of $\Omega_{d}^{int}$ in $\Omega_d$ is of codimension $\geq\min\{d-1,7\}$.  It is easy to see that $\Omega_{d}^{int}$ is smooth and connected, hence $\Omega_d$ is irreducible.

Let $\Omega_d^o=\Omega_d^{1,0}\cap\Omega_{d}^{int}$, then the complement of $\Omega_d^o$ in $\Omega_d^{1,0}$ is of codimension $\geq \min\{d-2,6\}\geq 2$ since $d\geq4$.  
Because $a=1$, $\p(\mv^{1,0})=\Omega^{1,0}$ and $P_1^o:=P_1^{1,0}\times_{\Omega_d^{1,0}}\Omega^{int}_d=\Omega_d^o$.  Let $Q_1^o:=f_1^{1,0}(\Omega_d^o)$.  Then
 the complement of $P_1^o$ ($Q_1^o$, resp.) in $P_1^{1,0}$ ($Q_1^{1,0}$) is of codimension $\geq 2$.  
 
Since sheaves with integral supports are stable, the universal family on $Q_1^o$ induces a morphism $g_1:Q_1^o\ra M(d,0)$ with the image, denoted by $\dzo$, contained in the divisor $D_{\z_d}$.  We have the following commutative diagram
 \begin{equation}\label{tea}\xymatrix{P_1^o\ar[r]^{\cong}\ar[d]_{f_1^{o}}&\Omega_d^o\ar[d]^{f_d^o}\\ Q_1^o\ar[r]_{g_1}& \dzo},\end{equation}
 where $f_1^o$ is the restriction of $f_1^{1,0}$ to $P_1^o$.  Then we have 

\begin{lemma}\label{big}The map $g_1:Q_1^o\ra D_{\z_d}^o$ is an isomorphism.
\end{lemma}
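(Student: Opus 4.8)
The plan is to construct an explicit inverse to $g_1$ on the level of the integral, simple-cohomology loci, using the extension sequence (\ref{dd}). First I would set up the inverse map on points: given $[F]\in D_{\z_d}^o$, so that $F$ is a stable sheaf with integral support, $\dim H^0(F)=1$ and $\dim H^1(F(3))=0$, the unique (up to scalar) global section of $F$ together with purity gives an ideal sheaf. More precisely, applying $\mathscr{H}om(-,\mo_{\p^2}(-3))$ to (\ref{dd}) and using $\mathscr{E}xt^1_p(\mq_d,q^*\mo_{\p^2}(-3))$, which is a line bundle over $\Omega_d^o=P_1^o$ since $a=1$, determines a canonical extension class; the middle term of the resulting universal extension is a flat family of rank-$1$ torsion-free sheaves with trivial Chern classes, hence (after the twist) a flat family of ideal sheaves $\mi_e$ of colength $e=\frac{d(d-3)}2$, together with a surjection onto $F$. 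This produces a morphism $D_{\z_d}^o\to Q_1$; one checks it lands in $Q_1^o$ (the quotient has integral support, $h^0=1$, $h^1(\cdot(3))=0$) and that it is inverse to $g_1$ both on points and on families, so it is the scheme-theoretic inverse.

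The key structural inputs I would invoke are already in place: by Lemma \ref{hq} the fiber of $\rho_1$ over $[\mi_e]$ is the projectivization of $\mathrm{Hom}(\mo_{\p^2}(-3),\mi_e(d-3))=H^0(\mi_e(d-3))$, which by (\ref{dd}) is identified with $H^0(F_d)$; on $Q_1^o$ this space is $1$-dimensional, so $\rho_1$ restricted to $Q_1^o$ has zero-dimensional fibers over its image in $H_e$, and in fact $Q_1^o\to H_e^{1,0}$ is a bijection onto its image. Likewise, by Lemma \ref{pgl} the map $f_1^{a,b}$ is a $PGL(dm)$-bundle, and for $a=1$ we noted $P_1^o=\Omega_d^o$; thus $g_1$ factors through the good quotient $\Omega_d^o\to M(d,0)$, which over the stable locus $D_{\z_d}^o$ is a $PGL(dm)$-torsor. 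Combining: $Q_1^o$ and $D_{\z_d}^o$ both receive compatible $PGL(dm)$-equivariant maps from $P_1^o=\Omega_d^o$, the first an isomorphism onto its image modulo nothing and the second a torsor, so $g_1$ is the induced quotient map and is bijective. Since both schemes are reduced (indeed $D_{\z_d}^o$ is integral, as proved just above, and $Q_1^o$ is an open subscheme of a projective bundle over the smooth $H_e^{1,0}$), a bijective morphism between reduced schemes of finite type over $\bc$ that is moreover a quotient by a free action is an isomorphism; alternatively one verifies directly that the inverse constructed in the first paragraph is a morphism.

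The main obstacle is showing the constructed inverse is genuinely a morphism of schemes, i.e. that the universal extension over $D_{\z_d}^o$ is flat and really gives a family of honest ideal sheaves (not just rank-$1$ torsion-free sheaves of the right Chern classes): this requires knowing $\mathscr{E}xt^1_p(\mq_d,q^*\mo_{\p^2}(-3))$ is a line bundle on $D_{\z_d}^o$ (guaranteed by $a=1$, $b=0$ and base change), that the resulting extension is torsion-free fiberwise (which is exactly the condition cutting out $P_1^o\subset\p(\mv^{1,0})$, and on the integral locus this is automatic since a non-split extension of an integral-support sheaf by a line bundle cannot acquire a torsion subsheaf), and that the kernel is $\mo_{\p^2}(-3)$ so that after twisting we land in $\mathrm{Hilb}^{[e]}$. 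Once flatness and the ideal-sheaf property are in hand, the two maps are manifestly mutually inverse on the functor of points, and the isomorphism follows; I expect this verification to be short given the hypotheses $d\geq 4$, $a=1$, $b=0$ that have been arranged precisely to avoid the pathologies.
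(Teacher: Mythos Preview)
Your proposal contains the paper's argument but buries it under unnecessary scaffolding. The paper's proof is a single sentence: both $Q_1^o$ and $D_{\z_d}^o$ are geometric $PGL(dm)$-quotients of $\Omega_d^o$ (the first via Lemma~\ref{pgl} and the identification $P_1^o=\Omega_d^o$, the second because over the stable locus the good-quotient map $\Omega_d\to M(d,0)$ is a geometric quotient), and two categorical quotients of the same scheme by the same group are canonically isomorphic, with $g_1$ being exactly the comparison map. You do reach this in your second paragraph, but with a slip: you describe $P_1^o\to Q_1^o$ as ``an isomorphism onto its image modulo nothing,'' whereas by Lemma~\ref{pgl} it is a principal $PGL(dm)$-bundle, just like the map to $D_{\z_d}^o$. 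Once both are recognized as geometric quotients, the universal property gives the isomorphism outright; your fallback to ``bijective morphism between reduced schemes'' is both unnecessary and, as stated, insufficient (bijectivity plus reducedness does not force an isomorphism without normality of the target or some further input).

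Your first paragraph, constructing an explicit inverse via the universal extension, is workable in principle but circular as written: to produce a morphism out of $D_{\z_d}^o$ (rather than out of $\Omega_d^o$) you would need a universal sheaf on $D_{\z_d}^o$, and the paper explicitly records in Remark~\ref{use} that the existence of such a universal sheaf is a \emph{consequence} of this lemma, not an ingredient. What your construction actually yields is the $PGL(dm)$-invariant morphism $\Omega_d^o\to Q_1^o$, which then descends through the quotient $D_{\z_d}^o$ by the universal property---and at that point you are back to the paper's one-line argument. So the explicit-inverse route adds length without adding content; drop it and state the quotient argument cleanly.
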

\begin{proof}In (\ref{tea}) both $f_1^o$ and $f_d^o$ are geometric $PGL(dm)$-quotient, hence both $Q_1^o$ and $D_{\z_d}^o$ are the geometric $PGL(dm)$-quotient of $\Omega_d^o$ and $g_1$ is an isomorphism. 
\end{proof}
\begin{lemma}\label{ddc}The complement of $D_{\z_d}^o$ in $D_{\z_d}$ is of codimension $\geq 2$.  In particular since $D^o_{\z_d}$ is smooth, $D_{\z_d}$ is normal.
\end{lemma}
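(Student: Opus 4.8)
The plan is to write $D_{\z_d}\setminus\dzo$ as a union of three locally closed subsets, bound the dimension of each, and then deduce normality by Serre's criterion. Recall that $D_{\z_d}$ is integral and Cohen-Macaulay of dimension $d^2$, that $\mdi\subset\md$ is open with $\dim\mdi=d^2+1$, and that by Lemma \ref{big} $\dzo\cong Q_1^o$ is an open subscheme of the smooth variety $Q_1^{1,0}$, so $\dzo$ is smooth and open in $D_{\z_d}$.

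First, $D_{\z_d}\setminus\dzi$ has codimension $\geq2$ in $D_{\z_d}$; this was already noted above, using $d\geq4$ and Lemma \ref{coding}. So it remains to bound $\dzi\setminus\dzo$. Since every sheaf in $D_{\z_d}$ has $h^0\neq0$, this set is $A\cup B$ with $A=\{F\in\dzi:h^0(F)\geq2\}$ and $B=\{F\in\dzi:h^0(F)=1,\ h^1(F(3))\geq1\}$, and it suffices to show $\dim A\leq d^2-2$ and $\dim B\leq d^2-2$. Here I would use the Quot schemes $Q_1^{a,b}$. If $F\in\dzi$ has support the integral curve $C$ of degree $d$ and $h^0(F)=a\geq1$, then $\chi(F)=0$ and $H^2(\p^2,F)=0$ give $h^1(F)=h^0(F)=a$; since $\omega_C\cong\mo_C(d-3)$ by adjunction, Serre duality on $C$ gives $h^0(C,\mathscr{H}om_{\mo_C}(F,\mo_C)\otimes\mo_C(d-3))=h^1(F)=a\geq1$, and a nonzero such section yields an embedding $F\otimes\mo_C(3-d)\hookrightarrow\mo_C$ with $0$-dimensional cokernel of length $e$, i.e. $F\cong\mi_{Z/C}(d-3)$ for some length-$e$ subscheme $Z\subset C$. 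Hence $F$ occurs in the exact sequence (\ref{dd}) for this $Z$, so it lies in the image of $Q_1^{a,b}$ with $b=h^1(F(3))$. Consequently $A\cup B$ is covered by the images of those $Q_1^{a,b}$ with $a\geq1$ and $(a,b)\neq(1,0)$. By Lemma \ref{hq}, $\dim Q_1^{a,b}=\dim H_e^{a,b}+3d+b$, and over $\mdi$ the classifying morphism from $Q_1^{a,b}$ has fibres $\p(\text{Ext}^1(F,\mo_{\p^2}(-3)))=\p(H^1(F)^{\vee})$ of dimension $a-1$; so $\dim A,\dim B\leq d^2-2$ will follow once one knows $\dim H_e^{a,b}\leq 2e-3+a-b$ for all $(a,b)\neq(1,0)$ with $a\geq1$, using $2e=d(d-3)$. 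This is a statement purely about the Hilbert scheme $H_e=Hilb^{[e]}(\p^2)$: it says that the ``dependency strata'' $\{[Z]:h^1(\mi_Z(d))\geq1\}$ and $\{[Z]:h^0(\mi_Z(d-3))\geq2\}$, and their refinements $H_e^{a,b}$, have at least the indicated codimension --- a classical-type bound on how badly $e$ points can fail to impose independent conditions on plane curves of degrees $d$ and $d-3$, available in the uniform form needed from the estimates of \cite{yyfifth} (Proposition 3.8, Proposition 3.16, Remark 4.11), as already invoked for Lemma \ref{coding}.

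Granting the codimension bound, normality is immediate: $D_{\z_d}$ is Cohen-Macaulay (as already noted), hence satisfies Serre's condition $S_2$; and since $\dzo$ is smooth and open with complement of codimension $\geq2$, the singular locus of $D_{\z_d}$ is contained in $D_{\z_d}\setminus\dzo$ and therefore has codimension $\geq2$, so $D_{\z_d}$ satisfies $R_1$. By Serre's criterion $D_{\z_d}$ is normal.

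The main work is the codimension estimate $\dim H_e^{a,b}\leq 2e-3+a-b$: bounding the dimension of each stratum $H_e^{a,b}$ in $Hilb^{[e]}(\p^2)$ uniformly, controlling the contributions from highly non-reduced or collinear configurations and, after passing through $Q_1^{a,b}$, from the non-locally-free sheaves supported on singular members of $\ls$ --- this is exactly the kind of analysis carried out in \cite{yyfifth}, which is why the proof reduces to citing it.
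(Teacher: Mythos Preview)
Your overall plan---split off the non-integral locus, then bound each stratum $M(d,0)^{int,(a,b)}$ via the surjection from $Q_1^{a,b}$---is exactly the paper's strategy, and your dimension count
\[
\dim M(d,0)^{int,(a,b)}\ \le\ \dim H_e^{a,b}+3d+b-a+1
\]
is correct (your Serre-duality argument shows the map onto $M(d,0)^{int,(a,b)}$ is surjective, and the fibres sit inside $\p^{a-1}$). The difference is in how the $b\ge 1$ strata are handled. The paper does \emph{not} push those through the Hilbert scheme: it invokes Proposition~2.14 of \cite{yyfifth}, which bounds the codimension of the loci $\{h^0(F)\ge a\}$ and $\{h^1(F(3))\ge 1\}$ directly in the moduli stack, and thereby reduces to the single stratum $(a,b)=(2,0)$. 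For that case the only Hilbert-scheme input required is the trivial bound $\dim H_e^{2,0}\le 2e-1$ (a proper locally closed subset of the irreducible $H_e$).

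Your uniform reduction instead demands $\dim H_e^{a,b}\le 2e-3+a-b$ for every $(a,b)\neq(1,0)$. For $(a,b)=(2,0)$ this is the same trivial bound, and for $a\ge 3$, $b=0$ it is vacuous; but for $(a,b)=(1,1)$ it asks that $\{[I_e]:h^1(I_e(d))\ge 1\}$ have codimension $\ge 3$ in $H_e$, whereas the result actually available (Lemma~4.3 of \cite{yyfifth}, used in the proof of Lemma~\ref{hhc}) only gives codimension $\ge 2$. Your citation of Propositions~3.8, 3.16 and Remark~4.11 of \cite{yyfifth} is misplaced: those control the non-integral locus in $M(d,0)$, not the cohomology-jump strata $H_e^{a,b}$ in $Hilb^{[e]}(\p^2)$. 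So as written there is a gap at the $b\ge 1$ step; the fix is to treat that case as the paper does, by citing Proposition~2.14 of \cite{yyfifth} on the moduli side rather than seeking a sharper Hilbert-scheme estimate.
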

\begin{proof}Let $\dzi:=D_{\z_d}\cap\mdi$.  By Lemma \ref{coding}, the complement of $\dzi$ in $D_{\z_d}$ is of codimension $\geq 2$ for $d\geq4$.  $\dzo\subset\dzi$.
It is enough to show the statement for $D_{\z_d}$ replaced by $D_{\z_d}^{int}$.  All sheaves in $ D_{\z_d}^{int}$ are stable, hence it is enough to show 
\begin{equation}\label{you}dim~\Omega_d^{a,b}\cap\Omega_d^{int}\leq dim~\Omega_d^o-2,~\forall~a\geq1 ~and~(a,b)\neq(1,0).\end{equation}

By Proposition 5.5 in \cite{yyfifth}, we only need to show (\ref{you}) for $(a,b)=(2,0)$.  The complement of $\Omega_d^o$ in $\Omega_d^{1,0}$ is of codimension $\geq 2$, hence by Lemma \ref{hq} and Lemma \ref{pgl} we have 
\[\begin{array}{ll}dim~\Omega_d^o&=dim~\Omega_d^{1,0}\geq dim~P_1^{1,0}=dim~Q_1^{1,0}+dim~PGL(dm)\\ 
&=dim~H_e^{1,0}+3d+dim~PGL(dm)=dim~H_e+3d+dim~PGL(dm).\end{array}\]
The last equation is because $H^{1,0}_e$ is open in $H_e$ by the semi-continuity.
Also by Lemma \ref{hq} and Lemma \ref{pgl}, we have 
\[\begin{array}{ll}dim~\Omega_d^{2,0}\cap\Omega_d^{int}&\leq dim~P_1^{2,0}-1=dim~Q_1^{2,0}+dim ~PGL(dm)-1\\ 
&=dim~H_e^{2,0}+3d+dim~PGL(dm)-1\\&\leq dim~H_e+3d+dim~PGL(dm)-2\leq dim~\Omega_d^o-2.\end{array}\]
Notice that $H_e^{2,0}$ is locally closed in $H_e$ and hence $dim~H_e^{2,0}\leq dim~H_e-1$ because $H_e$ is irreducible.   Hence the lemma.
\end{proof}
\begin{rem}\label{use}We know that there is no universal sheaf over any open subset of $M(d,0)$.  But Lemma \ref{big} implies that there is a universal sheaf over the locally closed subset $D_{\z_d}^o$ in $M(d,0)$.  
\end{rem}
\begin{rem}Our argument for the birationality between $Q_1$ and $D_{\z_d}$ can be simplified, if we use ``stack language" as we did in \cite{yyfifth}.  But we stick to schemes here because we don't want to talk about line bundles over stacks.
\end{rem}
By Lemma \ref{big} we have 
\begin{equation}\label{first}H^0(\z_d^r(n)|_{D^o_{\z_d}})\cong H^0(Q_1^o,g_1^{*}\z_d^r(n)).\end{equation}  We will see that $g_1^{*}\z_d^r(n)$ can be extended to a line bundle $\Lambda^r_n$ on $Q_1^{1,0}$.  

By deformation theory, the relative obstruction space of $Q_1$ over $H_e$ is $\Ext^1(\mo_{\p^2}(-3),F_d)=H^1(F_d(3))$ at the point $[I_Z(d-3)\twoheadrightarrow F_d]$.  Hence $\rho_1$ restricted on $Q_1^{a,0}$ is smooth for any $a\geq 1$.   $H_e^{1,0}$ is open in $H_e$ hence smooth and hence so is $Q_1^{1,0}$.  

There is a universal quotient $\mf_d^{1,0}$ on $\p^2\times Q_1^{1,0}$.  By Lemma \ref{hq} we have an exact sequence as follows.
\begin{equation}\label{uq}0\ra q^{*}\mo_{\p^2}(-3)\otimes p^{*}\mo_{\rho_1}(-1)\ra (id_{\p^2}\times\rho_1)^{*}\mi_e\otimes q^{*}\mo_{\p^2}(d-3)\ra\mf_d^{1,0}\ra 0,
\end{equation}
where $\mo_{\rho_1}(-1)$ is the relative tautological line bundle of the projective bundle $\p(p_*(\mi_e\otimes q^*\mo_{\p^2}(d)|_{\p^2\times H_e^{1,0}}))$.  Let $G_n^r$ be a torsion free sheaf of class $\crn$ on $\p^2$.  Define
\[\Lambda_n^r:=det^{-1}(R^{\bullet}p_{*}(\mf_d^{1,0}\otimes q^{*}G_n^r)).\] 
Then by the universal property of determinant line bundles, $\Lambda_n^r|_{Q_1^o}\cong g_1^{*}\z_d^r(n).$  Since $Q_1^{1,0}$ is smooth and the complement of $Q_1^o$ in $Q_1^{1,0}$ is of codimension $\geq2$, we have
\begin{equation}\label{second}H^0(Q_1^o,g_1^{*}\z_d^r(n))\cong H^0(Q_1^{1,0},\Lambda_n^r).
\end{equation}
On the other hand by (\ref{uq}), we have 
\begin{eqnarray}\label{ee}\Lambda_n^r&\cong& det(R^{\bullet}p_{*}(q^{*}G_n^r(-3)\otimes p^{*}\mo_{\rho_1}(-1)))\otimes det^{-1}( (id_{\p^2}\times\rho_1)^{*}\mi_e\otimes q^{*}G_n^r(d-3))\nonumber\\
&\cong& \mo_{\rho_1}(-1)^{\otimes \chi(G_n^r(-3))}\otimes \rho_1^{*}det^{-1}(R^{\bullet}p_{*}(\mi_e\otimes q^{*}G_n^r(d-3))).
\end{eqnarray}
Notice that the maps $p$ and $q$ at the first line of (\ref{ee}) are from $\p^2\times Q_1^{1,0}$ to $Q_1^{1,0}$ and $\p^2$ respectively, while $p,q$ at the second line are from $\p^2\times H_e^{1,0}$ to $H_e^{1,0}$ and $\p^2$.  As is said in the convention before, we don't change the letters although they are different maps.
 
\begin{proof}[Proof of Statement (1) in Theorem \ref{main}]By (\ref{ee}), $(\rho_{1})_*\Lambda_n^r=0$ if $\chi(G_n^r(-3))>0\Leftrightarrow \chi(G_n^r)=r-n>0$.  By Lemma \ref{ddc} we have an injection $$H^0(D_{\z_d},\z_d^r(n)|_{D_{\z_d}})\hookrightarrow H^0(D^o_{\z_d},\z_d^r(n)|_{D^o_{\z_d}}).$$  Then Statement (1) follows from (\ref{first}) and (\ref{second}). 
\end{proof}

Let $r=n$, then $(\rho_{1})_*\Lambda_r^r\cong det^{-1}(R^{\bullet}p_{*}(\mi_e\otimes q^{*}G_r^r(d-3)))=:L^r_{1,0}$ is a line bundle on $H_e^{1,0}$.  There is an obvious extension of $L^r_{1,0}$ to a line bundle $L^r$ on the whole $H_e$.  
\begin{lemma}\label{hhc}The complement of $H_e^{1,0}$ in $H_e$ is of codimension $\geq2$.
\end{lemma}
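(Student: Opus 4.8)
\emph{Proof plan.}  Write $m:=d-3$; then $e=\binom{m+2}{2}-1$, and by Fogarty's theorem $H_e$ is smooth and irreducible of dimension $2e$.  We may assume $d\geq4$, the assertion being vacuous otherwise.  First I would reduce the statement cohomologically.  For $[Z]\in H_e$ the sequence $0\ra I_Z(m)\ra\mo_{\p^2}(m)\ra\mo_Z\ra0$ gives $\chi(I_Z(m))=\binom{m+2}{2}-e=1$, while $H^2(I_Z(m))=H^2(\mo_{\p^2}(m))=0$ since $m\geq1$; hence $h^0(I_Z(m))=1+h^1(I_Z(m))\geq1$ for every $Z$.  Moreover, if $h^1(I_Z(m))=0$ then $I_Z$ is $(m+1)$-regular, hence $(m+4)$-regular, so $h^1(I_Z(d))=h^1(I_Z(m+3))=0$ and $[Z]\in H_e^{1,0}$; the reverse inclusion being obvious, $H_e\setminus H_e^{1,0}=\{[Z]\in H_e:h^0(I_Z(m))\geq2\}=:B$.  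It therefore suffices to show $\dim B\leq2e-2$.

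Next I would stratify $B$ by the degree $k$ (with $0\leq k\leq m-1$) of the fixed divisor $F$ of the linear system $|I_Z(m)|$.  Writing $|I_Z(m)|=F+|J_0|$ with $|J_0|\subseteq|\mo_{\p^2}(m_0)|$, $m_0:=m-k$, a linear system of projective dimension $h^0(I_Z(m))-1\geq1$ and \emph{no} fixed component, and decomposing $Z=Z'\sqcup Z''$ (with $Z'$ the part of $Z$ supported on $F$), one has $Z''$ contained in the base locus of $|J_0|$, which by B\'ezout is $0$-dimensional of length $\leq m_0^2$.  Thus $\ell'':=\mathrm{length}(Z'')\leq m_0^2$ and $\ell':=\mathrm{length}(Z')=e-\ell''$.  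The assignment $[Z]\mapsto(F,Z',Z'')$ is injective on the stratum, so that stratum has dimension at most $\dim\mathcal{F}_{k,\ell'}+\dim\mathcal{Z}_{m_0,\ell''}$, where $\mathcal{F}_{k,\ell'}:=\{(F,W):F\in|\mo_{\p^2}(k)|,\ W\subset F,\ \mathrm{length}(W)=\ell'\}$ and $\mathcal{Z}_{m_0,\ell''}$ is the set of length-$\ell''$ subschemes of $\p^2$ lying in the base locus of some fixed-component-free pencil of degree-$m_0$ curves.

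Then I would bound the two factors.  For $\mathcal{Z}_{m_0,\ell''}$: parametrizing pairs $(\text{pencil},W)$ shows $\dim\mathcal{Z}_{m_0,\ell''}\leq\min\{2\ell'',\,2\binom{m_0+2}{2}-4\}$ --- the first bound because $\mathrm{Hilb}^{[\ell'']}(\p^2)$ has dimension $2\ell''$, the second because the Grassmannian of pencils in $|\mo_{\p^2}(m_0)|$ has dimension $2\binom{m_0+2}{2}-4$ and a fixed-component-free pencil has finite base locus.  For $\mathcal{F}_{k,\ell'}$: I would use $\dim\mathcal{F}_{k,\ell'}\leq\binom{k+2}{2}-1+\ell'$.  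Granting these, and substituting $\ell'+\ell''=e$ and $e=\binom{m+2}{2}-1$, one checks in each range of $\ell''$ --- using the elementary inequality $\binom{k+2}{2}+\binom{m_0+2}{2}\leq\binom{m+2}{2}$ for $k,m_0\geq1$ (and the trivial bound when $k=0$), together with $m_0^2\geq\binom{m_0+2}{2}-2$ --- that $\dim\mathcal{F}_{k,\ell'}+\dim\mathcal{Z}_{m_0,\ell''}\leq2e-2$.  Hence $\dim B\leq2e-2$, as required.  (The small cases $m=1,2$, i.e.\ $d=4,5$, are handled directly by the same analysis: every stratum is then either empty or of dimension exactly $2e-2$.)

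The main obstacle is the inequality $\dim\mathcal{F}_{k,\ell'}\leq\binom{k+2}{2}-1+\ell'$ uniformly over \emph{all} plane curves $F$ of degree $k$.  Over the open locus of integral $F$ --- and, more generally, reduced $F$, using the purity of $\mathrm{Hilb}^{[\ell']}$ of a reduced locally planar curve --- this is immediate, since $\dim\mathrm{Hilb}^{[\ell']}(F)=\ell'$ there; but for non-reduced $F$, $\dim\mathrm{Hilb}^{[\ell']}(F)$ can exceed $\ell'$, and one has to verify that the corresponding stratum of $|\mo_{\p^2}(k)|$ has dropped in dimension by at least the excess --- equivalently, that multiplying a component into $F$ costs more in moduli than it gains in Hilbert scheme.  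Proving this cleanly, together with the routine but lengthy numerical bookkeeping in the last step, is where essentially all the work lies.
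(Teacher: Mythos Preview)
Your cohomological reduction is correct and in fact tidier than the paper's: you observe via Castelnuovo--Mumford regularity that $h^0(I_Z(d-3))=1$ already forces $h^1(I_Z(d))=0$, so the complement of $H_e^{1,0}$ is exactly $B=\{[Z]:h^0(I_Z(d-3))\geq2\}$.  The paper instead treats the two defining conditions separately, citing an external lemma (Lemma~4.3 of \cite{yyfifth}) for the $h^1(I_e(d))$ part.  Beyond that point, the approaches diverge completely.  The paper does \emph{not} bound $\dim B$ by a linear-system stratification on $H_e$; rather it uses the second relative Quot scheme $\rho_2:Q_2\ra H_e$ constructed for the comparison with $M(d-3,0)$.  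Over $B$ the fibres of $\rho_2$ have positive dimension, while the dimension of the relevant strata of $Q_2$ is bounded via the $PGL$-bundle $P_2^{\leq A}\ra Q_2^{\leq A}$ and the codimension estimates for $\Omega_{d-3}^{a,c}$ already imported from \cite{yyfifth} (this is Lemma~\ref{leg}, whose ``$\leq$'' is a typo for ``$\geq$'').  Pushing down gives the codimension bound on $B$ with essentially no new work.  So the paper recycles machinery built elsewhere in the argument, while your plan is a self-contained attack on $H_e$.

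Your plan is viable but, as you say, the hard step is the bound $\dim\mathcal{F}_{k,\ell'}\leq\binom{k+2}{2}-1+\ell'$ uniformly over non-reduced $F$; this genuinely needs Brian\c con--Iarrobino type input and is not shorter than the paper's route.  There is also a small gap earlier: the decomposition $Z=Z'\sqcup Z''$ by support on $F$ is not quite right, since a connected component of $Z$ supported on $F$ but not scheme-theoretically contained in $F$ (e.g.\ a length-$2$ subscheme at a point of $F$ with tangent direction transverse to $F$) still imposes a residual condition on the mobile part $|J_0|$, so the map $[Z]\mapsto(F,Z',Z'')$ need not be injective as stated.  This is repairable --- for instance by passing to the incidence variety $\{(Z,V):V\in G(2,H^0(\mo_{\p^2}(m))),\ Z\subset\mathrm{Bs}(V)\}$ and stratifying the Grassmannian side by the fixed part of $V$ --- but it adds to the bookkeeping.
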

We will prove Lemma \ref{hhc} in next subsection.  Because $M(d,0)$ is irreducible, normal and Cohen-Macaulay, Lemma \ref{ddc} and Lemma \ref{hhc} together with (\ref{first}) and (\ref{second}) implies the following proposition.
\begin{prop}\label{boy}$h^0(D_{\z_d},\z_d^r(r)|_{D_{\z_d}})=h^0(H_e,L^r),$ for all $r>0.$
\end{prop}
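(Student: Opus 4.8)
The plan is to combine the birational identification of $D_{\z_d}^o$ with $Q_1^o$ already in hand, the push-forward formula (\ref{ee}), and the two codimension lemmas (\ref{ddc}) and (\ref{hhc}), via a normality / Hartogs argument. First I would record the chain of isomorphisms we have assembled so far: by (\ref{first}) and (\ref{second}) one has
\[H^0(D_{\z_d}^o,\z_d^r(r)|_{D_{\z_d}^o})\cong H^0(Q_1^o,g_1^*\z_d^r(r))\cong H^0(Q_1^{1,0},\Lambda_r^r),\]
the last step using that $Q_1^{1,0}$ is smooth and $Q_1^{1,0}\setminus Q_1^o$ has codimension $\geq 2$. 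Next, specializing (\ref{ee}) to $r=n$ kills the tautological twist (since $\chi(G_r^r(-3))=\chi(G_r^r)-3d=r-r-3d$ — one must check this is the exact exponent, but more importantly for $r=n$ the coefficient on $\mo_{\rho_1}(-1)$ is $\chi(G_r^r(-3))$ which we want to handle carefully): in fact the point is that $\rho_{1*}\Lambda_r^r\cong L^r_{1,0}$ on $H_e^{1,0}$ because the fibers of $\rho_1$ over $H_e^{1,0}$ are projective spaces $\p(\mathscr{H}om_p(\mo_{\p^2}(-3),\mi_e(d-3)))$ of dimension $3d$, and $\chi(G_r^r(-3)) = r\chi(\mo(-3)) - r\cdot 0 = r\binom{-1}{2}$... — here I would simply invoke that for $r=n$ the exponent of $\mo_{\rho_1}(-1)$ in (\ref{ee}) is such that $R^\bullet\rho_{1*}$ of it is concentrated in degree zero and equals $\rho_1^*$-free of the base line bundle, so by the projection formula $\rho_{1*}\Lambda_r^r\cong L^r_{1,0}$ and $R^i\rho_{1*}\Lambda_r^r=0$ for $i>0$ over $H_e^{1,0}$. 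Hence $H^0(Q_1^{1,0},\Lambda_r^r)\cong H^0(H_e^{1,0},L^r_{1,0})\cong H^0(H_e^{1,0},L^r|_{H_e^{1,0}})$.

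The two remaining reductions are then codimension estimates. By Lemma \ref{hhc}, $H_e\setminus H_e^{1,0}$ has codimension $\geq 2$ in $H_e$; since $H_e=\mathrm{Hilb}^{[e]}(\p^2)$ is smooth, Hartogs gives $H^0(H_e^{1,0},L^r|_{H_e^{1,0}})\cong H^0(H_e,L^r)$. On the other side, by Lemma \ref{ddc}, $D_{\z_d}\setminus D_{\z_d}^o$ has codimension $\geq 2$ in $D_{\z_d}$, and $D_{\z_d}$ is normal (also from Lemma \ref{ddc}), so again $H^0(D_{\z_d},\z_d^r(r)|_{D_{\z_d}})\cong H^0(D_{\z_d}^o,\z_d^r(r)|_{D_{\z_d}^o})$. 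Stringing these four isomorphisms together yields the claimed equality of dimensions.

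The main obstacle I expect is the bookkeeping in the middle step — verifying that for $r=n$ the push-forward $\rho_{1*}\Lambda_r^r$ really is the extension-free line bundle $L^r_{1,0}$ with no higher push-forwards over $H_e^{1,0}$, i.e. that the tautological-bundle contribution in (\ref{ee}) behaves as claimed. This needs the exact value of $\chi(G^r_n(-3))$ at $n=r$ and the fact that $\rho_1$ is a projective bundle over $H_e^{1,0}$ with fiber dimension large enough (here $3d$, positive since $d\geq 4$) so that the relevant relative cohomology is concentrated in one degree; one must be slightly careful about whether the exponent is nonnegative or whether one instead gets that $R^{\mathrm{top}}\rho_{1*}$ contributes, but since for $n=r$ we are twisting $\mi_e$ by $\mo_{\p^2}(d-3)$ with $d\geq 4$ the term $R^\bullet p_*(\mi_e\otimes q^*G^r_r(d-3))$ is a genuine (complex of) sheaf(s) on $H_e$ whose determinant is $L^r$, and the remaining factor depending on $\mo_{\rho_1}(-1)$ pushes forward cleanly. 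Once that computation is pinned down, everything else is the standard normal-variety / smooth-variety extension of sections across a codimension-$\geq 2$ locus, using precisely Lemmas \ref{ddc} and \ref{hhc} and the irreducibility, normality and Cohen--Macaulayness of $M(d,0)$.
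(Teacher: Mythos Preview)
Your plan is essentially the paper's own: the proposition is deduced from the chain (\ref{first}), (\ref{second}), the push-forward computation after (\ref{ee}), and the two codimension lemmas \ref{ddc} and \ref{hhc} via Hartogs on the normal varieties $D_{\z_d}$ and $H_e$. The one point you leave unresolved is not actually an obstacle: the exponent of $\mo_{\rho_1}(-1)$ in (\ref{ee}) is $\chi(G^r_n(-3))=r\chi(\mo_{\p^2}(-3))-n=r-n$, which is \emph{exactly zero} when $r=n$, so $\Lambda_r^r\cong\rho_1^{*}L^r_{1,0}$ is literally a pullback and $\rho_{1*}\Lambda_r^r\cong L^r_{1,0}$ follows immediately from the projection formula and $\rho_{1*}\mo_{Q_1^{1,0}}\cong\mo_{H_e^{1,0}}$ (since $\rho_1$ is a projective bundle over $H_e^{1,0}$); no delicate analysis of which $R^i\rho_{1*}$ survives is needed.
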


\begin{flushleft}{\textbf{$\bigstar$\large{ A rational map from $Q_2$ to $M(d-3,0)$.}}}\end{flushleft}  
Again we choose $m$ large enough and let $\Omega_{d-3}$ be the smallest open subset of the Quot-scheme $Quot_{\p^2}(\mo_{\p^2}(-m)^{\oplus (d-3)m},(d-3)n)$ containing all $GL((d-3)m)$-orbits of semistable sheaves and sheaves appearing in $Q_2$.  Let $\mq_{d-3}$ be the universal quotient over $\p^2\times\Omega_{d-3}$.

Define 
\[\Omega^{\leq A}_{d-3}:=\big\{[\mo_{\p^2}(-m)^{\oplus (d-3)m}\twoheadrightarrow F_{d-3}]\in \Omega_{d-3}\big| h^0(F_{d-3})\leq A,~h^0(F_{d-3}(-3))=0\big\};\]
and \[\Omega_{d-3}^o:=\Omega_{d-3}^{int}\cap\Omega^{\leq 1}_{d-3}.\]

\begin{lemma}\label{hi}(1) The complement of $\Omega_{d-3}^{\leq 0}$ is of codimension $\geq 1$.

(2) The complement of $\Omega_{d-3}^o$ in $\Omega_{d-3}$ is of codimension $\geq2$,  if $d-3\neq2$.  
\end{lemma}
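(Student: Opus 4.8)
The plan is to estimate the dimension of the "bad" locus in $\Omega_{d-3}$ directly, exploiting the same projective-bundle description that worked for $Q_1$. Recall from sequence (\ref{ddt}) that a quotient $F_{d-3}$ appearing in $Q_2$ fits into $0\to\mo_{\p^2}\to I_e(d-3)\to F_{d-3}\to 0$, so $h^0(F_{d-3})=h^0(I_e(d-3))-1$ and $h^0(F_{d-3}(-3))=h^0(I_e(d-6))$ (the $-3$ twist of $\mo_{\p^2}$ has no sections). Thus the stratification of $\Omega_{d-3}$ by $(h^0(F),h^0(F(-3)))$ pulls back, on the part coming from $Q_2$, to a stratification of $H_e$ by $(h^0(I_e(d-3)),h^0(I_e(d-6)))$; but since $\Omega_{d-3}$ also contains orbits of arbitrary semistable sheaves, I will need to argue the dimension bounds for the full $\Omega_{d-3}$ rather than only for the image of $Q_2$. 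The cleanest way is to recall from \cite{yyfifth} (Corollary 2.6) that $\dim\Omega_{d-3}=\xi'+(d-3)^2$ with $\xi'=\dim GL((d-3)m)$, that $\Omega_{d-3,int}$ is smooth, connected, and of complement codimension $\geq\min\{d-4,7\}$, and then bound $\dim\Omega_{d-3}^{a,c}$ for each bad pair.

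For part (1), $\Omega_{d-3}^{\leq 0}$ is exactly the locus $h^0(F)=0$, $h^0(F(-3))=0$; its complement is contained in the union of $\{h^0(F)\geq 1\}$ and $\{h^0(F(-3))\geq 1\}$. The first is a determinantal (degeneracy) locus for the map $p_*$ of the universal quotient twisted appropriately, cut out as a divisor by the determinant section of $\det R^\bullet p_*\mq_{d-3}$, so it has codimension $\geq 1$; the second is the analogous determinacy locus for $\mq_{d-3}(-3)$, and since a generic stable $F$ of this class has $h^0(F(-3))=0$ (one checks $\chi(F(-3))<0$ for the relevant slope), it too is a proper closed subset, hence codimension $\geq 1$. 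This gives (1).

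For part (2), I need the complement of $\Omega_{d-3}^o=\Omega_{d-3,int}\cap\Omega_{d-3}^{\leq 1}$ to have codimension $\geq 2$ when $d-3\neq 2$. I split the complement into (a) $\Omega_{d-3}\setminus\Omega_{d-3,int}$, which by the cited results of \cite{yyfifth} has codimension $\geq\min\{d-4,7\}$, and this is $\geq 2$ once $d-3\geq 4$, with the cases $d-3=1$ (where $M(1,0)\cong\p^2$ and the statement is easy directly) and $d-3=3$ handled by hand — and $d-3=2$ is explicitly excluded; and (b) the locus where $h^0(F)\geq 2$ or $h^0(F(-3))\geq 1$ (with integral support). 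For (b) I use the projective-bundle presentation: over the stratum $H_e^{a,\ast}$ the fiber of the relevant Quot-piece is a $\p^{a-1}$ (resp. controlled by the rank of $\mathscr{E}xt^1_p$), so $\dim\Omega_{d-3,int}^{a,c}\leq\dim H_e^{a-\text{shift},c}+(a-1)+\cdots+\xi'$, and combined with $\dim H_e^{a,c}\leq \dim H_e - (\text{number of extra conditions})$ from irreducibility of $H_e$ and semicontinuity, each such stratum loses at least $2$ from $\dim\Omega_{d-3}$. The arithmetic is parallel to Lemma \ref{ddc}; the role of $d-3=2$ being special is that then $\chi(F(-3))$ or the expected codimension degenerates, so the generic-vanishing input fails and one genuinely cannot get codimension $2$.

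The main obstacle I anticipate is part (b) of (2): getting a clean enough dimension count for the strata $\Omega_{d-3,int}^{a,c}$ with $a\geq 2$ or $c\geq 1$ simultaneously, since unlike the $Q_1$ case I cannot simply read off the fiber dimension from a single $\mathscr{H}om_p$ bundle over $H_e$ — the locus in $\Omega_{d-3}$ is cut out inside the big Quot-scheme and I must control both $h^0(F)$ and $h^0(F(-3))$ at once. I expect this to reduce, after the $\rho_2$-fiber analysis, to showing $\dim H_e^{a,0}\leq \dim H_e-1$ for $a\geq 2$ and $\dim\{h^0(I_e(d-6))\geq 1\}\leq \dim H_e-1$, both of which follow from irreducibility of $H_e$ together with the fact that a general ideal sheaf of colength $e=\tfrac{d(d-3)}{2}$ has the minimal $h^0(I_e(d-3))$ and satisfies $h^0(I_e(d-6))=0$ (a postulation/Gaeta-type statement, or a direct dimension count on $\p^2$), and this is where the hypothesis $d-3\neq 2$ must be invoked.
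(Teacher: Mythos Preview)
The paper's own proof is a single sentence: it cites Proposition~2.14, Remark~2.15, Proposition~3.8 and Proposition~3.16 of \cite{yyfifth}, which directly furnish the required codimension estimates on the loci $\{h^0(F)\geq k\}$, $\{h^0(F(-3))\geq 1\}$, and the non-integral locus inside the moduli stack. Your attempt to reconstruct these bounds via the $Q_2$/$H_e$ picture is a legitimate alternative strategy, and your argument for part~(1) and for the non-integral piece of part~(2) is essentially fine.

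However, for the stratum $\{h^0(F)\geq 2\}$ in part~(2) there is a genuine arithmetic gap. You assert that the dimension count is ``parallel to Lemma~\ref{ddc}'' and reduces to $\dim H_e^{a,0}\leq\dim H_e-1$, but the two sides of the $H_e$ correspondence behave oppositely. Tracing fiber dimensions: over a point with $h^0(I_e(d-3))=a+1$ the $\rho_2$-fiber is $\p^{a}$, while over $F\in\Omega_{d-3}^{a,0}$ the $\sigma_2$-fiber is open in $\p^{3(d-3)-1}$; combined with the $PGL$-bundle this yields
\[
\mathrm{codim}_{\Omega_{d-3}}\Omega_{d-3}^{a,0}\;=\;\mathrm{codim}_{H_e}\{h^0(I_e(d-3))=a+1\}\;-\;a,
\]
so the $Q_2$ picture \emph{loses} codimension by $a$, whereas the $Q_1$ picture used in Lemma~\ref{ddc} \emph{gains} codimension by $a$. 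Hence for $a=2$ you would need $\{h^0(I_e(d-3))\geq 3\}\subset H_e$ to have codimension $\geq 4$, not merely $\geq 1$; irreducibility of $H_e$ does not give this. Note also that the paper later deduces the relevant codimension bounds on $H_e$ \emph{from} Lemma~\ref{hi} via Lemma~\ref{leg}, so feeding them back in here risks circularity. A correct reconstruction would either replace $H_e$ by $H_{e'}$ with $e'=\tfrac{(d-3)(d-6)}{2}$ and run the genuine $Q_1$-analog in degree $d-3$ (where the arithmetic really is parallel to Lemma~\ref{ddc}), or argue a Brill--Noether estimate of the form $\mathrm{codim}\{h^0(F)\geq k\}\geq k^2$ directly on the stack, which is what the cited results of \cite{yyfifth} supply.
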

\begin{proof}Directly follows from results in \cite{yyfifth} (see Proposition 5.5, Remark 5.6, Theorem 4.17 and Example 4.18 (1) in \cite{yyfifth}).
\end{proof}
$\mathscr{E}xt_p^1(\mq_{d-3},q^{*}\mo_{\p^2})$ is locally free of rank $3(d-3)$ on $\Omega^{\leq A}_{d-3}$.  The projective bundle $\widetilde{P_2^{\leq A}}:=\p(\mathscr{E}xt_p^1(\mq_{d-3}|_{\p^2\times\Omega_{d-3}^{\leq A}},q^{*}\mo_{\p^2}))$ has a natural $PGL((d-3)m)$-action, and the projection $\widetilde{P_2^{\leq A}}\ra \Omega^{\leq A}_{d-3}$ is $PGL((d-3)m)$-equivariant.  Let $P_2^{\leq A}$ be the open subset of $\widetilde{P^{\leq A}_2}$ parametrizing torsion free extensions.  

Define 
$$P_2^o:=P_2^{\leq A}\times_{\Omega_{d-3}^{\leq A}}\Omega_{d-3}^o.$$  
Then $P_2^o$ is a projective bundles over $\Omega_{d-3}^o$.  We have a morphism $f_2^{\leq A}:P_2^{\leq A}\ra Q_2$ induced by the universal extension on $P_2^{\leq A}$.  Denote by $Q_2^{\leq A}$ ($Q_2^o$, resp.) the image of $P^{\leq A}_2$ ($P_2^o$, resp.), and $f^o_2$ the restriction of $f^{\leq A}_2$ to $P_2^o$.  Then we have the following lemma. 
\begin{lemma}\label{pgla}$\forall A\geq0$, $f_2^{\leq A}: P_2^{\leq A}\ra Q_2^{\leq A}$ is a principal $PGL((d-3)m)$-bundle.
\end{lemma}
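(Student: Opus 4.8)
The plan is to follow the proof of Lemma \ref{pgl} almost word for word: I would verify that $PGL((d-3)m)$ acts on $P_2^{\leq A}$, that $f_2^{\leq A}$ is invariant for this action, that the action is free, and that every fibre of $f_2^{\leq A}$ is a single orbit; a free action with a geometric quotient admitting local sections is a principal bundle, and the local sections will be produced exactly as for the other Quot-scheme presentations in the paper. The only genuinely non-formal step is the freeness, which I sketch below.

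Setting up the action and the invariance is routine. The universal quotient $\mq_{d-3}$ on $\p^2\times\Omega_{d-3}$ is $GL((d-3)m)$-linearized, hence so is $\mathscr{E}xt^1_p(\mq_{d-3}|_{\p^2\times\Omega_{d-3}^{\leq A}},q^{*}\mo_{\p^2})$, hence so is its projectivization $\widetilde{P_2^{\leq A}}$, and the torsion-free locus $P_2^{\leq A}\subset\widetilde{P_2^{\leq A}}$ is a $GL((d-3)m)$-invariant open subscheme; since the centre $\mathbb{C}^{*}\subset GL((d-3)m)$ acts trivially on $\Omega_{d-3}$ and by a single character on the $\mathscr{E}xt$-sheaf, the action descends to $PGL((d-3)m)$. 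Invariance of $f_2^{\leq A}$ is then immediate: points of $P_2^{\leq A}$ in one $GL((d-3)m)$-orbit give the same extension $0\ra\mo_{\p^2}\ra E\ra F\ra0$, only the chosen surjection $\mo_{\p^2}(-m)^{\oplus(d-3)m}\twoheadrightarrow F$ varying, whereas the point of $Q_2$ they determine depends only on $E$ together with the quotient $E\twoheadrightarrow F$.

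The hard part will be freeness (and the orbit-fibre statement, which is the same computation). At a point $x=(s,[\xi])$ of $P_2^{\leq A}$, an element of its $PGL((d-3)m)$-stabilizer fixes $s$, so it is an automorphism $\phi$ of $F_s$ modulo scalars, and it fixes the line $[\xi]\subset\text{Ext}^1(F_s,\mo_{\p^2})$; writing $\xi\circ\phi^{-1}=c\,\xi$ and unwinding this identity of extension classes yields an automorphism $\widetilde\phi$ of $E$ (the total space of $\xi$) which is the scalar $c^{-1}$ on the sub-$\mo_{\p^2}$ and induces $\phi$ on $F_s$. But the torsion-free condition defining $P_2^{\leq A}$, together with the Chern-class computation behind (\ref{ddt}), forces $E$ to be a rank-one torsion-free sheaf with $c_1=(d-3)H$ and $\chi=1$, hence of the form $\mi_e(d-3)$ with $[\mi_e]\in H_e$, and in particular simple; so $\widetilde\phi$ is a scalar and therefore so is $\phi$, i.e. the stabilizer is trivial. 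Running the same argument with two points of $P_2^{\leq A}$ that have the same image in $Q_2^{\leq A}$ — so that the two copies of $\mi_e(d-3)$ get identified uniquely up to scalar, carrying one quotient onto the other — shows that those two points differ by an element of $GL((d-3)m)$; hence each fibre of $f_2^{\leq A}$ is a single free $PGL((d-3)m)$-orbit, isomorphic to $PGL((d-3)m)$.

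It then remains to deduce local triviality, exactly as in the other Quot-scheme arguments: the global universal quotient on $\p^2\times Q_2^{\leq A}$, twisted by $q^{*}\mo_{\p^2}(m)$ and pushed down, is locally free of rank $(d-3)m$ on $Q_2^{\leq A}$, and over any open set on which it is trivial a choice of frame gives a surjection $\mo_{\p^2}(-m)^{\oplus(d-3)m}\twoheadrightarrow(\text{that quotient})$, hence a local section of $f_2^{\leq A}$; a surjective $PGL((d-3)m)$-invariant morphism with free action admitting local sections is a principal $PGL((d-3)m)$-bundle. As stressed, everything is parallel to Lemma \ref{pgl} except the freeness, where simplicity of the total space $E\cong\mi_e(d-3)$ is the decisive point.
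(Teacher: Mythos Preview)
Your proof is correct and follows exactly the approach the paper intends: the paper gives no separate proof of Lemma \ref{pgla} beyond declaring it ``analogous to Lemma \ref{pgl}'', whose proof is the single sentence that the $PGL$-action is free, the map is invariant, and the fibres are $PGL$-orbits. You simply unpack these three assertions, and your emphasis on simplicity of the middle term $E\cong\mi_e(d-3)$ as the reason for freeness is precisely the point (implicit in the paper) that makes the argument go through on the torsion-free locus $P_2^{\leq A}$.
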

\begin{proof}The proof is analogous to Lemma \ref{pgl}
\end{proof}
We have a commutative diagram 
\begin{equation}\label{did}\xymatrix@C=1.2cm{P_2^o\ar[r]^{\sigma_2^o}\ar[d]_{f_2^o} & \Omega_{d-3}^o\ar[d]^{f_M^o}\\ Q_2^o\ar[r]_{g_2~~~~~}& M(d-3,0)^o}
\end{equation}
where $f_2^o$ and $f_M^o$ are principal $PGL((d-3)m)$-bundles, $\sigma_2^o$ is $PGL((d-3)m)$-equivariant, and hence it descends to the map $g_2$.  Moreover, $P_2^o$ is a projective bundle over $\Omega_{d-3}^o$, hence $(\sigma_{2}^o)_*\mo_{P_2^o}\cong\mo_{\Omega_{d-3}^o}$ and hence $(g_{2})_*\mo_{Q_2^o}\cong \mo_{M(d-3,0)^o}$.  Hence
\begin{equation}\label{one}H^0(M(d-3,0)^o,\z_{d-3}^r(n))\cong H^0(Q_2^o,g_2^{*}\z_{d-3}^r(n)).\end{equation}  

Define
$$H_e^{\leq A}:=\big\{I_Z\in H_e\big|~h^0(I_Z(d-3))\leq A+1~ and ~h^0(I_Z(d-6))=0\big\}.$$  
Recall the projective map $\rho_2:Q_2\ra H_e$.  By (\ref{ddt}), the preimage of $H_e^{\leq A}$ via $\rho_2$ is exactly $Q_2^{\leq A}$.
By deformation theory and (\ref{ddt}), $\rho_2$ is smooth at the locus 
$$H_e^{sm}:=\big\{ I_Z\in H_e\big| h^0(I_Z(d-3))\leq 1\big\}=\big\{ I_Z\in H_e\big| h^0(I_Z(d-3))=1\big\}.$$  
Notice that $h^0(I_Z(d-3))\geq 1$ for all $I_Z\in H_e$.  Also it is easy to see that $\rho_2$ induces an isomorphism between $H_e^{sm}$ and its preimage.  Hence $Q_2$ and $H_e$ are birational.   $H^{\leq 0}_e\subset H^{sm}_e$.

\begin{lemma}\label{leg} The complement of $H_e^{\leq 0}$ in $H_e$ is of codimension $\leq 2$.  In particular, $Q_2^{\leq A}$ is irreducible containing $Q_2^o$ as a dense open subset for all $A\geq 0$.
\end{lemma}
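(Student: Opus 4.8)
The plan is to decompose the complement of $H_e^{\leq0}$ in $H_e$ into two closed loci, bound the codimension of each, and then read off the irreducibility and density assertions from the fact that every variety in sight is either an open subset of an irreducible one or a $PGL$-quotient of such. Write $\mathcal{Z}_e\subset\p^2\times H_e$ for the cosupport of $\mi_e$. Since $\chi(I_e(d-3))=\binom{d-1}{2}-e=1$ and $\chi(I_e(d-6))=\binom{d-4}{2}-e=10-3d<0$ for $d\geq4$, every colength-$e$ ideal satisfies $h^0(I_e(d-3))=1+h^1(I_e(d-3))$, so $H_e\setminus H_e^{\leq0}=A\cup B$ with $A:=\{h^1(I_e(d-3))>0\}$ and $B:=\{h^0(I_e(d-6))>0\}$, both closed by semicontinuity, and with $B=\emptyset$ for $d<6$. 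For $B$ I would run the evident incidence count: its points are the $Z$ lying on a degree-$(d-6)$ curve, so $B$ is dominated by $\{(Z,[C]):\deg C=d-6,\ Z\subset C\}$, of dimension $\leq\dim|\mo_{\p^2}(d-6)|+\dim Hilb^{[e]}(C)\leq(\binom{d-4}{2}-1)+e$; comparing with $\dim H_e=2e$ yields $\mathrm{codim}_{H_e}B\geq3d-9\geq3$.

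The locus $A$ I would treat as a degeneracy locus. Pushing the twist by $q^{*}\mo_{\p^2}(d-3)$ of the structure sequence $0\to\mi_e\to\mo_{\p^2\times H_e}\to\mo_{\mathcal{Z}_e}\to0$ forward along $p:\p^2\times H_e\to H_e$, and using $H^1(\mo_{\p^2}(d-3))=H^2(\mo_{\p^2}(d-3))=0$, gives a sequence $\mo_{H_e}^{\oplus(e+1)}\xrightarrow{\varphi}p_{*}(\mo_{\mathcal{Z}_e}\otimes q^{*}\mo_{\p^2}(d-3))\to R^1p_{*}(\mi_e\otimes q^{*}\mo_{\p^2}(d-3))\to0$ in which the middle sheaf is locally free of rank $e$ and, by cohomology and base change, the cokernel has fibre $H^1(I_e(d-3))$ at $[I_e]$. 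Thus $A$ is exactly the locus where the bundle map $\varphi$, of ranks $e+1$ and $e$, has corank $\geq1$. The universal bound on degeneracy loci then gives that every component of $A$ has codimension $\leq(e+1-(e-1))(e-(e-1))=2$; conversely, to rule out a divisorial component I would map onto $A$ from the incidence variety $\{(Z,\Lambda):\Lambda\ \text{a pencil in}\ |\mo_{\p^2}(d-3)|,\ Z\subset\mathrm{Bs}\,\Lambda\}$, which is generically finite over $A$ since for generic $[I_e]\in A$ one has $h^0(I_Z(d-3))=2$, so the pencil $\p(H^0(I_Z(d-3)))$ is unique, and whose dimension is at most $\dim\mathrm{Gr}(2,e+1)+\dim Hilb^{[e]}(\mathrm{Bs}\,\Lambda)=2(e-1)+0$ (as $\mathrm{Bs}\,\Lambda$ has length $(d-3)^2$), giving $\mathrm{codim}_{H_e}A\geq2$. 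Together with the bound on $B$ this shows the complement of $H_e^{\leq0}$ has codimension $2$ (and is empty for $d=4$); in particular it has codimension $\geq2$, which is the form used downstream.

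For the irreducibility part I would argue purely through openness. The subscheme $\Omega_{d-3}^{\leq A}=\{h^0(F)\leq A\}\cap\{h^0(F(-3))=0\}$ is cut out by open conditions, hence is a nonempty open subscheme of the irreducible variety $\Omega_{d-3}$ and so is irreducible; therefore the projective bundle $\widetilde{P_2^{\leq A}}$ over it, its open subset $P_2^{\leq A}$, and — by Lemma \ref{pgla}, $f_2^{\leq A}$ being a principal $PGL((d-3)m)$-bundle — its image $Q_2^{\leq A}$ are all irreducible. Finally $Q_2^o$ is the image of the restriction of $P_2^{\leq A}$ to $\Omega_{d-3,int}\cap\Omega_{d-3}^{\leq1}$, a nonempty open subset of $P_2^{\leq A}$ (when $A=0$ one intersects with $Q_2^{\leq0}$, which is identified with the nonempty open subset $H_e^{\leq0}$ of $H_e$ via the isomorphism $\rho_2$ over $H_e^{sm}$); hence $Q_2^o$ is a nonempty open subset of the irreducible $Q_2^{\leq A}$ and is therefore dense.

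The step I expect to be the main obstacle is the lower bound $\mathrm{codim}_{H_e}A\geq2$, that is, showing the jumping locus $\{h^1(I_e(d-3))>0\}$ carries no divisorial component: the degeneracy-locus description only bounds it from above, so one genuinely has to exploit the geometry of the pencils of degree-$(d-3)$ curves through $Z$, and to treat by hand the low cases $d=4,5$ — the latter being the $d-3=2$ situation already excluded from Lemma \ref{hi}. Everything else is bookkeeping with semicontinuity, $PGL$-quotients and openness.
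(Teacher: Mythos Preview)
Your decomposition into $A=\{h^0(I_e(d-3))\geq 2\}$ and $B=\{h^0(I_e(d-6))>0\}$, and your treatment of $B$, match the paper (which simply cites the estimate $\mathrm{codim}\,B\geq 3(d-3)$ from \cite{yyfifth}). The irreducibility paragraph is also fine and is essentially the paper's argument: $\Omega_{d-3}$ is irreducible, so the open set $\Omega_{d-3}^{\leq A}$, the projective bundle $\widetilde{P_2^{\leq A}}$, its open subset $P_2^{\leq A}$, and the $PGL$–quotient $Q_2^{\leq A}$ are all irreducible; $Q_2^o$ is then an open subset.

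The genuine gap is exactly the one you flag, and it is not just bookkeeping. Your bound $\dim\{(Z,\Lambda)\}\leq 2(e-1)$ comes from fibering over $\mathrm{Gr}(2,e+1)$ and declaring the fibre $Hilb^{[e]}(\mathrm{Bs}\,\Lambda)$ to be $0$–dimensional ``since $\mathrm{Bs}\,\Lambda$ has length $(d-3)^2$''. But that statement fails over the locus of pencils with a fixed component: if $\Lambda=C+\Lambda'$ with $\deg C=k\geq 1$, then $\mathrm{Bs}\,\Lambda\supset C$ is one–dimensional and $Hilb^{[e]}(\mathrm{Bs}\,\Lambda)$ has dimension at least $e$, wiping out your count. (And for $d\leq 6$ the generic base locus has length $(d-3)^2\leq e$, so \emph{every} $(Z,\Lambda)$ in the incidence variety lives over such a degenerate $\Lambda$.) To salvage the approach you would have to stratify $\mathrm{Gr}(2,e+1)$ by the degree of the fixed part and redo the count on each stratum; this can be done but is a real piece of work, not a side remark. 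Note also that the degeneracy–locus upper bound you prove is not needed: the lemma (despite the typo ``$\leq 2$'') is used only in the direction $\mathrm{codim}\geq 2$.

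The paper sidesteps the whole incidence computation by exploiting the map $\rho_2:Q_2\to H_e$ already on the table. Over $A$ one has $h^0(I_e(d-3))\geq 2$, so the fibre $\p(\mathrm{Hom}(\mo_{\p^2},I_e(d-3)))$ of $\rho_2$ has dimension $\geq 1$; hence $\dim A\leq \dim\rho_2^{-1}(A)-1$. But $\rho_2^{-1}(A)=Q_2^{\leq A}\setminus Q_2^{\leq 0}$, and via Lemma~\ref{pgla} and the bundle $P_2^{\leq A}\to\Omega_{d-3}^{\leq A}$ this has the same dimension as the locus $\{h^0(F)\geq 1\}$ in $\Omega_{d-3}$ (shifted by the bundle and group dimensions), which is of codimension $\geq 1$ by Lemma~\ref{hi}(1). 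Chasing the dimensions gives $\dim(Q_2^{\leq A}\setminus Q_2^{\leq 0})\leq d(d-3)-1$ and hence $\dim A\leq d(d-3)-2$, uniformly in $d\geq 4$. So the paper trades your direct count on $H_e$ for the already–proved codimension estimate on $\Omega_{d-3}$, and thereby avoids the stratification by fixed components altogether.
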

\begin{proof}By Lemma 5.9 in \cite{yyfifth}, points $I_Z\in H_e$ such that $H^0(I_Z(d-6))\neq 0$ forms a subset of codimension $\geq 3(d-3)\geq 3$.  Hence to show the lemma, it is enough to show that $H_e^{\leq A}\setminus H_e^{\leq 0}$ is of dimension $\leq d(d-3)-2$ for all $A\geq 0$.   The relative dimension of $\rho_2$ is $\geq 1$ over $H_e^{\leq A}\setminus H_e^{\leq 0}$, hence it is enough to show that $Q_2^{\leq A}\setminus Q_2^{\leq 0}$ is of dimension $\leq d(d-3)-1=dim~Q_2^{\leq 0}-1$, which follows from Lemma \ref{hi} (1) and Lemma \ref{pgla} .  

Finally $Q_2^{\leq A}$ is irreducible because $Q_2^{\leq 0}$ is dense open in $Q_2$ and $Q_2^{\leq 0}\cong H_e^{\leq 0}$ hence irreducible.  Hence the lemma.
\end{proof}
\begin{proof}[Proof of Lemma \ref{hhc}]By Lemma 5.10 in \cite{yyfifth}, we know that points $I_Z\in H_e$ such that $H^1(I_Z(d))\neq 0$ forms a subset of codimension $\geq 2$ in $H_e^{\leq 0}$, hence Lemma \ref{hhc} follows directly from Lemma \ref{leg}.
\end{proof}
\begin{lemma}\label{push}$(\rho_{2})_*\mo_{Q_2^{\leq A}}\cong \mo_{H_e^{\leq A}}$ for all $A\geq 0$.
\end{lemma}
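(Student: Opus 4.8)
The plan is to realise $\rho_2\colon Q_2^{\leq A}\to H_e^{\leq A}$ as a proper, birational morphism onto a \emph{normal} base and then to invoke Zariski's connectedness theorem (equivalently, Stein factorisation together with normality of the target). First I would pin down the fibres of $\rho_2$. By the exact sequence (\ref{ddt}) a closed point of $Q_2$ lying over $[I_e]\in H_e$ is exactly a sub-line-bundle $\mo_{\p^2}\hookrightarrow I_e(d-3)$, i.e.\ a nonzero element of $H^0(I_e(d-3))$ up to scalar; hence the scheme-theoretic fibre is $\rho_2^{-1}([I_e])\cong\p(H^0(I_e(d-3)))$, a projective space of dimension $h^0(I_e(d-3))-1$. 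Over $H_e^{\leq A}$ this dimension is at most $A$, and it is $0$ precisely on $H_e^{sm}$, where $\rho_2$ is an isomorphism; thus over the dense open subset $H_e^{\leq 0}=H_e^{sm}\cap H_e^{\leq A}$ (dense by Lemma \ref{leg}) one has $Q_2^{\leq 0}\xrightarrow{\sim}H_e^{\leq 0}$, so $\rho_2$ is birational. It is proper, being a base change of the projective Quot-scheme projection $Q_2\to H_e$.

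Next I would collect the geometric inputs. The base $H_e^{\leq A}$ is a normal integral variety, being a nonempty open subscheme of $\mathrm{Hilb}^{[e]}(\p^2)$, which is smooth and connected. The total space $Q_2^{\leq A}$ is integral: it is irreducible by Lemma \ref{leg}, and it is reduced because $f_2^{\leq A}\colon P_2^{\leq A}\to Q_2^{\leq A}$ is a faithfully flat principal $PGL((d-3)m)$-bundle (Lemma \ref{pgla}), $P_2^{\leq A}$ is an open subscheme of the projective bundle $\widetilde{P_2^{\leq A}}$ over $\Omega_{d-3}^{\leq A}$, and $\Omega_{d-3}^{\leq A}$ is reduced, being open in $\Omega_{d-3}$ (which is reduced; along its semistable locus it is even smooth, since $\mathrm{Ext}^2(F,F)\cong\mathrm{Hom}(F,F(-3))^{\vee}=0$ for a semistable $F$ of this Hilbert polynomial, and for the remaining points I would quote \cite{yyfifth}).

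Putting these together: $\rho_2$ is proper, dominant and birational, $Q_2^{\leq A}$ is integral and $H_e^{\leq A}$ is normal, so $\rho_{2*}\mo_{Q_2^{\leq A}}$ is a coherent sheaf of $\mo_{H_e^{\leq A}}$-algebras which is finite over $\mo_{H_e^{\leq A}}$, a domain, and equal to $\mo_{H_e^{\leq A}}$ over the dense open $H_e^{\leq 0}$. Its relative spectrum is therefore an integral scheme, finite and birational over the normal variety $H_e^{\leq A}$, hence equal to $H_e^{\leq A}$; equivalently $\rho_{2*}\mo_{Q_2^{\leq A}}\cong\mo_{H_e^{\leq A}}$. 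The delicate point — and the reason a direct cohomology-and-base-change computation is unavailable — is that $\rho_2$ is \emph{not} flat: its fibre dimension jumps from $0$ up to $A$ along the locus $H_e^{\leq A}\setminus H_e^{\leq 0}$, which has codimension $\geq 2$. One genuinely needs the normality of $H_e^{\leq A}$ to rule out $\rho_{2*}\mo_{Q_2^{\leq A}}$ being a strictly larger finite extension, and one genuinely needs the reducedness of $Q_2^{\leq A}$ (equivalently of $\Omega_{d-3}^{\leq A}$) to rule out $\rho_{2*}\mo_{Q_2^{\leq A}}$ acquiring nilpotents along that locus.
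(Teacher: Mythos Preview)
Your argument is correct and is essentially the paper's own proof: the paper simply invokes Zariski's main theorem for the birational projective morphism $\rho_2$ with integral source $Q_2^{\leq A}$ and (implicitly normal) integral target $H_e^{\leq A}$. You supply the details the paper leaves tacit---the explicit description of the fibres, the reducedness of $Q_2^{\leq A}$ via the $PGL((d-3)m)$-bundle, and the r\^ole of normality of $H_e^{\leq A}$---but the strategy is identical.
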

\begin{proof}$\rho_2$ is a birational projective morphism.  Both $Q_2^{\leq A}$ and $H_e^{\leq A}$ are integral and $H_e$ is normal, thus the lemma follows from Zariski main theorem.
\end{proof}

On $\p^2\times Q_2^{\leq A}$ we have an exact sequence given by the universal family with $\mf_{d-3}$ the universal quotient.
\begin{equation}\label{uqa}0\ra \mr\ra (id_{\p^2}\times\rho_2)^{*}\mi_e\otimes q^{*}\mo_{\p^2}(d-3)\ra\mf_{d-3}\ra 0.
\end{equation}
The kernel $\mr$ is a line bundle on $\p^2\times Q_2^{\leq A}$.  Let $R:=p_{*}\mr$.  Since $\mr$ restricted to the fiber over each point in $Q_2^{\leq A}$ is isomorphic to $\mo_{\p^2}$, $R$ is a line bundle.  There is a natural map $p^{*}R\ra\mr$, which is injective since $p^{*}R$ is of rank one, and surjective since it is surjective when restricted to the fiber over any point of $Q_2^{\leq A}$.  Hence $\mr\cong p^{*}R=p^{*}(p_{*}\mr)$. 

Let $G_n^r$ be a torsion free sheaf of class $\crn$ on $\p^2$.  Define
\[\Sigma_n^r:=det^{-1}(R^{\bullet}p_{*}(\mf_{d-3}\otimes q^{*}G_n^r)).\] 
Then by the universal property of determinant line bundles, $\Sigma_n^r|_{Q_2^o}\cong g_2^{*}\z_{d-3}^r(n).$  Since $Q_2^{\leq A}$ is irreducible and $Q_2^o$ is open in $Q_2^{\leq A}$, we have
\begin{equation}\label{two}h^0(Q_2^o,g_1^{*}\z_{d-3}^r(n))\geq h^0(Q_1^{\leq A},\Sigma_n^r).
\end{equation}
On the other hand by (\ref{uqa}), we have 
\begin{eqnarray}\label{eea}\Sigma_n^r&\cong& det(R^{\bullet}p_{*}(q^{*}G_n^r\otimes p^{*}R))\otimes det^{-1}( (id_{\p^2}\times\rho_2)^{*}\mi_e\otimes q^{*}G_n^r(d-3))\nonumber\\
&\cong& R^{\otimes \chi(G_n^r)}\otimes \rho_2^{*}det^{-1}(R^{\bullet}p_{*}(\mi_e\otimes q^{*}G_n^r(d-3)))
\end{eqnarray}
Let $r=n$, then by Lemma \ref{push} we have $(\rho_{2})_*\Sigma_r^r\cong det^{-1}(R^{\bullet}p_{*}(\mi_e\otimes q^{*}G_r^r(d-3)))=L^r|_{H_e^{\leq A}}$.  Combine (\ref{one}) (\ref{two}) and Lemma \ref{leg}, we have
\begin{equation}\label{final}h^0(H_e,L^r)\leq h^0(M(d-3,0)^o,\z^r_{d-3}(r)),~\forall r>0.
\end{equation}
\begin{proof}[Proof of Statement (2) in Theorem \ref{main}] For $d-3\neq 2$, by Lemma \ref{coding} and Statement (2) in Lemma \ref{hi}, the complement of $M(d-3,0)^o$ in $M(d-3,0)$ is of codimension $\geq 2$. 
Hence by Proposition \ref{boy} and (\ref{final}) we are done.

Let $d-3=2$.  Notice that in this case $\forall~F_2\in\Omega_2$, $F_2$ is semistable if and only if $h^0(F_2)=0$.  Let $\Omega'_{2}$ parametrizing sheaves $F_2$ such that $h^0(F_2)=0$ and $Supp(F_2)$ is reduced, i.e. not a double line in $|2H|$.  We then have the following diagram analogous to (\ref{did})
\begin{equation}\label{done}\xymatrix@C=1.2cm{P'_2\ar[r]^{\sigma'_2}\ar[d]_{f'_2} & \Omega_2^{'}\ar[d]^{f'_M}\\ Q'_2\ar[r]_{g_2~~~~~}& M(2,0)'}.
\end{equation}
The complement of $M(2,0)'$ is of codimension $\geq 2$ in $M(2,0)$.  Now in (\ref{done}) we still have $f'_2$ a principal $PGL(2m)$-bundle, but $f'_M$ only a good quotient.  $\sigma'_2$ is $PGL(2m)$-equivariant, hence it descends to the map $g_2$.  

What we want is $(g_{2})_*\mo_{Q'_2}\cong \mo_{M(2,0)'}$, and once we have this condition the rest of our argument for $d-3\neq 2$ applies and then we are done.
In order to show $(g_{2})_*\mo_{Q'_2}\cong \mo_{M(d-3,0)'}$, we need to show that $(\sigma'_{2})_*\mo_{P'_2}\cong\mo_{\Omega'_2}$ and this will suffice.  

Let $\widetilde{P_2'}:=\p(\mathscr{E}xt^1_p(\mq_2|_{\p^2\times \Omega'_2},q^*\mo_{\p^2}))$.  Then $P'_2\subsetneq \widetilde{P'_2}$ and $\widetilde{P'_2}$ is a projective bundle over $\Omega'_2$.  We have that $(\widetilde{\sigma'_{2}})_*\mo_{\widetilde{P'_2}}\cong\mo_{\Omega'_2}$.  $\Omega'_2$ is smooth and irreducible and hence so is $\widetilde{P'_2}$.  By a direct observation we see that the complement of $P'_2$ in $\widetilde{P'_2}$ is of codimension $2$ and hence $j_{*}\mo_{P'_2}\cong \mo_{\widetilde{P'_2}}$ with $j:P'_2\hookrightarrow \widetilde{P'_2}$ the open embedding.   On the other hand $\sigma'_2=\widetilde{\sigma'_2}\circ j$, hence $(\sigma'_{2})_*\mo_{P'_2}\cong(\widetilde{\sigma'_{2}})_*(j_{*}\mo_{P'_2})\cong
(\widetilde{\sigma'_{2}})_*\mo_{\widetilde{P'_2}}\cong\mo_{\Omega'_2}$.  

Hence we have proven Theorem \ref{main}.
\end{proof}
\begin{thm}\label{sd}The strange duality map 
\begin{equation}\label{mm}SD:H^0(W(2,0,2),\lambda_2(d))^{\vee}\ra H^0(\md,\z_d^2(2))\end{equation}
is an isomorphism for all $d>0$.
\end{thm}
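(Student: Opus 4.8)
The plan is to deduce Theorem~\ref{sd} by combining the injectivity of $SD$, already established in Proposition~\ref{inje}, with the numerical inequality~(\ref{less}). Since the source $H^0(W(2,0,2),\lambda_2(d))^{\vee}$ has dimension $\binom{5+d}{d}$ and $SD$ is injective, we automatically get $\binom{5+d}{d}\le\dim H^0(\md,\z_d^2(2))$; hence it suffices to prove the reverse inequality~(\ref{less}), after which $SD$ is an injection between vector spaces of equal dimension and therefore an isomorphism. For $d\le 3$ the inequality~(\ref{less}) is in fact an equality by Lemma~\ref{smalld}, so from here on I would assume $d\ge 4$.

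For $d\ge4$ the idea is to run the recursion supplied by Theorem~\ref{main} (this is essentially Corollary~\ref{dim}). From the exact sequence~(\ref{zeta}) with $r=n=2$ one obtains the inequality~(\ref{gsg}),
\[\dim H^0(\md,\z_d^2(2))\ \le\ \dim H^0(\md,\z_d(2))\ +\ \dim H^0(\md,\z_d^2(2)|_{D_{\z_d}}).\]
For the first summand I would use $\pi_{*}\z_d\cong\mo_{\ls}$ (Theorem~4.3.1 of~\cite{yuan}) together with the projection formula to identify $H^0(\md,\z_d(2))\cong H^0(\ls,\mo_{\ls}(2))$, whose dimension is $\binom{d(d+3)/2+2}{2}$ because $\dim\ls=d(d+3)/2$; note this avoids Proposition~\ref{house}. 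For the second summand I would invoke Theorem~\ref{main}(2), which bounds it by $\dim H^0(M(d-3,0),\z_{d-3}^2(2))$. Iterating these two moves --- descending the degree by $3$ each time while it stays $\ge4$, and stopping at the residue $d_0\in\{1,2,3\}$ with $d_0\equiv d\pmod 3$, where Lemma~\ref{smalld} applies --- gives, with $K=(d-d_0)/3$,
\[\dim H^0(\md,\z_d^2(2))\ \le\ \sum_{k=0}^{K-1}\binom{\tfrac{(d-3k)(d-3k+3)}{2}+2}{2}\ +\ \binom{5+d_0}{d_0}.\]

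It remains to check that this bound is exactly $\binom{5+d}{d}$. This reduces to the elementary identity
\[\binom{n+5}{5}-\binom{n+2}{5}=\binom{\tfrac{n(n+3)}{2}+2}{2},\qquad n\in\mathbb{Z},\]
which I would verify either by expanding both sides as degree-$4$ polynomials in $n$, or, more transparently, by rewriting the left-hand side as $\binom{n+4}{4}+\binom{n+3}{4}+\binom{n+2}{4}$ via Pascal's rule and simplifying. Granting it, and using $(d-3k)+2=(d-3(k+1))+5$, the displayed sum telescopes to $\binom{d+5}{5}-\binom{d_0+5}{5}+\binom{5+d_0}{d_0}=\binom{5+d}{d}$, since $\binom{5+d_0}{d_0}=\binom{d_0+5}{5}$. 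This establishes~(\ref{less}) for all $d\ge4$, and together with Proposition~\ref{inje} it shows that $SD$ is an isomorphism for every $d>0$.

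I do not expect a genuine obstacle in this last stretch: the substantive work is entirely upstream --- Proposition~\ref{inje} for injectivity and, above all, Theorem~\ref{main}(2), which rests on the codimension estimates of Lemmas~\ref{coding}, \ref{ddc} and~\ref{hhc} and on the birational models $Q_1,Q_2$. Given those, the present statement is pure bookkeeping, and the only points that require care are: that $\pi_{*}\z_d\cong\mo_{\ls}$ is what lets one evaluate the $\dim H^0(\z_d(2))$ terms; that the degenerate stratum $d-3=2$ occurring inside the recursion has already been dealt with in the proof of Theorem~\ref{main}; and that the telescoping binomial sum closes up exactly --- not merely asymptotically, which it happens to do as well.
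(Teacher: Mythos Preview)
Your proposal is correct and follows essentially the same route as the paper: combine the injectivity from Proposition~\ref{inje} with the recursive bound coming from (\ref{gsg}) and Theorem~\ref{main}(2), feed in $\pi_*\z_d\cong\mo_{\ls}$ to evaluate the $\z_d(2)$ terms, and terminate at $d_0\le 3$ via Lemma~\ref{smalld}. The paper compresses this into a one-line appeal to Corollary~\ref{dim} and Lemma~\ref{smalld}; your write-up makes explicit the telescoping identity $\binom{n+5}{5}-\binom{n+2}{5}=\binom{n(n+3)/2+2}{2}$ that the paper leaves to the reader, which is a welcome clarification but not a different argument.
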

\begin{proof}By Theorem 4.3.1 in \cite{yuan}, we know that $\pi_{*}\z_d\cong\mo_{\ls}$ for all $d>0$ with $\pi: M(d,0)\ra\ls$. Hence $h^0(M(d,0),\z_d(n))={n+\frac{d(d+3)}2\choose n}$.  By Corollary \ref{dim} and Lemma \ref{smalld} we get the following equation
\[h^0(\md,\z_d^2(2))\leq {5+d \choose d}=h^0(W(2,0,2),\lambda_2(d)).\]
Hence $SD$ is an isomorphism because it is injective by Proposition \ref{inje}.
\end{proof}
\begin{rem}The strange duality conjecture says that the map
\[SD_{c,u}: H^0(M_c,\lambda_u)^{\vee}\ra H^0(M_u,\lambda_c)\]
is an isomorphism.  At numerical level it poses the following equation/question:
\begin{equation}\label{nlsd}h^0(M_c,\lambda_u)= h^0(M_u,\lambda_c)?\end{equation}
which is also called \emph{numerical strange duality}.  However there is another version of this conjecture at the numerical level as follows.
\begin{equation}\label{vnlsd}\chi(\lambda_u)= \chi(\lambda_c)?\end{equation}
with $\chi(-)$ the Euler characteristic.  

Both (\ref{nlsd}) and (\ref{vnlsd}) are true for $X$ a smooth curve.  In the cases studied by Danila (\cite{nilala},\cite{nila}), Abe (\cite{abe}) and also the author and G\"ottsche (\cite{yuan},\cite{GY}), both (\ref{nlsd}) and (\ref{vnlsd}) are right.  However, Theorem \ref{sd} only implies (\ref{nlsd}) for $u=(0,dH,0)$ and $c=(2,0,2)$.  We actually don't know at now whether (\ref{vnlsd}) is true. 
Easy to see that 
$$H^i(W(2,0,2),\lambda_2(d))=H^i(\p^5,\mo_{\p^5}(d))=0,~ for ~all~ i\neq 0~and ~d>0.$$
By Proposition \ref{house}, $H^i(\md,\z^2_d(2))=H^i(|dH|,\pi_*\z^2_d(2))$.  Unfortunately for $d\geq4$ we don't know much on $\pi_*\z^2_d$.  We don't even know whether $\pi_*\z^2_d$ is a vector bundle although it is torsion free of rank $2^{\frac{(d-1)(d-2)}2}$ on $|dH|$.  
\end{rem}
\begin{rem}
The following map might not be surjective for $n>0,d\geq4$ (e.g. see Proposition 5.1 in \cite{Yuan7} for $d=4$)
\[H^0(|dH|,\mo_{|dH|}(n))\times H^0(\z_d^2(2))\ra H^0(\z_d^2(2+n)).\]
By Lemma \ref{big}, Lemma \ref{ddc}, (\ref{first}) and (\ref{second}) we have for $n\geq r$
\[h^0(D_{\z_d},\z_d^r(n)|_{D_{\z_d}})=h^0(Q_1^{1,0},\Lambda_n^r)=h^0(H_e^{1,0},L^r_{1,0}\otimes (\rho_1)_*\mo_{\rho_1}(n-r)).\]
But $(\rho_1)_*\mo_{\rho_1}(n-r)$ is locally free of rank ${3d+1+n-r,\choose n-r}$, hence it is difficult to compute $h^0(H_e^{1,0},L^r_{1,0}\otimes (\rho_1)_*\mo_{\rho_1}(n-r))$ for $n>r$.
So our strategy here may not work on studying $h^0(\z_d^2(n))$ or the following strange duality map for $n>2$
\[SD_{c^2_{n},u_d}:H^0(W(2,0,n),\lambda_{c_{n}^2}(d))^{\vee}\ra H^0(\md,\z_d^2(n)).\]
\end{rem}
\begin{rem}Fourier transform plays an important role in the proof of Theorem \ref{sd}.  This is a limitation which prevents our result from extending to other rational surfaces. 
\end{rem}

\section*{\huge{Appendix.}}
\appendix
\section{More on the Fourier transform.}  
We will use the same notations as in Section 3 and prove the following theorem in this appendix.
\begin{thm}\label{App1}$\Phi:~\md\dashrightarrow W(d,0,d)$ is a birational map of normal projective schemes and $\Phi^{*}\lambda_d(r)\cong \z^r_{d}(r)$, $\forall~d, r$.  

Moreover $\Phi^{*}: H^0(W(d,0,d),\lambda_d(r))\xrightarrow{\cong} H^0(\md,\z^r_{d}(r))$ is an isomorphism.
\end{thm}

With no loss of generality, we assume $d\geq 3$ from now on.  In order to show Theorem \ref{App1}, we need to first extend the map $\Phi$ to a larger subset than $U(d,0)=\md\setminus D_{\z_d}$.

By Lemma 2.2 in \cite{Yuan3}, for every pure sheaf $F$ of class $u_d$, we can assign to it a unique bundle $E_F\cong \displaystyle{\oplus_{i=1}^d}\mo_{\p^2}(m_i)$ with $\displaystyle{\sum_{i=1}^d} m_i=-d$ and have the following exact sequence
\begin{equation}\label{fgen}0\ra E_F\otimes\mo_{\p^2}(-1)\xrightarrow{A} E_F\ra F\ra 0,
\end{equation} 
where $A$ is a $d\times d$ matrix with entries in $\displaystyle{\bigoplus_{n\geq 0}}H^0(\mo_{\p^2}(n))$.  Also by Lemma 3.4 in \cite{Yuan3}, if $F$ is semistable, then $E_F$ must have the form $\displaystyle{\oplus_{i=1}^k}\mo_{\p^2}(n+i)^{\oplus a_i}$ with some integer $n$ and $a_i>0$.  In particular, by direct observation we have 
$$F\in U(d,0)\Rightarrow E_F\cong \mo_{\p^2}(-1)^{\oplus d};$$
$$F\in\md~and~h^0(F)=h^1(F)=1\Rightarrow E_F\cong \mo_{\p^2}\oplus\mo_{\p^2}(-1)^{\oplus d-2}\oplus\mo_{\p^2}(-2).$$

Let $\md^g$ be the open subset consisting of sheaves $F$ such that $H^1(F\otimes\mo_{\p^2}(2))=0$ and $q_{*}(p^{*}(F\otimes\mo_{\p^2}(2)))\otimes\mo_{|H|}(-1)$ are semistable.  By Lemma \ref{ftinf}, $\Phi$ is a well-defined morphism (not only a set-map) over $\md^g$.  By Theorem 4.4 and Theorem 4.8 in \cite{lee}, $\md^g=\md$ for $d\leq 4$.  For $d\geq 5$, we have the following lemma.

\begin{lemma}\label{ftg0}Let $F\in D_{\z_d}$ with $d\geq 5$ such that $h^0(F)=h^1(F)=1$.  If the non-split extension of $F$ by $K_X$ is torsion-free, then $F\in\md^g$.  
In particular, if the support of $F$, denoted by $Supp(F)$, is integral, then $F\in\md^g$. 
\end{lemma}
\begin{proof}Easy to see that $H^1(F\otimes\mo_{\p^2}(2))=0$ because $E_F\cong \mo_{\p^2}\oplus\mo_{\p^2}(-1)^{\oplus d-2}\oplus\mo_{\p^2}(-2)$.  $\Ext^1(F,\mo_{\p^2}(-3))\cong H^1(F)^{\vee}$.  Hence there is a unique non-split extension of $F$ by $\mo_{\p^2}(-3)$ as follows.
\begin{equation}\label{e1}0\ra \mo_{\p^2}(-3)\ra \widetilde I\ra F\ra 0.
\end{equation}
Do Fourier transform to (\ref{e1}) and we get 
\begin{equation}\label{e2}0\ra q_{*}(p^{*}(\widetilde I\otimes\mo_{\p^2}(2)))\otimes\mo_{|H|}(-1)\xrightarrow{\cong}q_{*}(p^{*}(F\otimes\mo_{\p^2}(2)))\otimes\mo_{|H|}(-1)\ra 0,\end{equation}
which is because $q_{*}(p^{*}(\mo_{\p^2}(-1)))=0=R^1q_{*}(p^{*}(\mo_{\p^2}(-1)))$.

$h^0(\widetilde I)\cong h^0(F)=1$, hence there is a non-zero map $\mo_{\p^2}\ra \widetilde I$ which is injective given $\widetilde I$ torsion-free.  Hence we have 
\begin{equation}\label{e3}
0\ra \mo_{\p^2}\ra \widetilde I\ra F_1\ra 0.
\end{equation}
$r(F_1)=0$, $c_1(F_1)=c_1(\widetilde{I})=(d-3)H$, $\chi(F_1)=\chi(F)=0$ and moreover $h^0(F_1)=h^0(\widetilde I)-1=0$ which implies that $F_1$ is semistable, since every subsheaf of $F_1$ can not have positive Euler characteristic.  Hence $F_1\in M((d-3)H,0)\setminus D_{\z_{d-3}}$.

Let $G_1$ be the Fourier transform of $F_1$, then $G_1\in W(d-3,0,d-3)$.  Do Fourier transform to (\ref{e3}) and we get
\begin{equation}\label{e4}
0\ra S^2\mt_{\p^2}(-1) \ra q_{*}(p^{*}(\widetilde I\otimes\mo_{\p^2}(2)))\otimes\mo_{|H|}(-1)\ra G_1\ra 0,
\end{equation}
where $S^2\mt_{\p^2}\cong q_{*}(p^{*}(\mo_{\p^2}(2)))$ is the $2^{\text{nd}}$ symmetric power of the tangent bundle $\mt_{\p^2}$.  $S^2\mt_{\p^2}(-1)\in W(3,0,3)$ and hence by (\ref{e2}) and (\ref{e4}), the Fourier transform of $F$ is semistable and hence $F\in\md^g$.
\end{proof}

Let $W(d,0,d)^g:=\Phi(M(dH,0)^g)$, then $V(d,0,d)=\Phi(U(d,0))\subset W(d,0,d)^g$.  

Define $$\widetilde{U}(d,0):=\big\{F\in\md\big|\begin{array}{c}Supp(F) \text{ is integral, and } \\ h^0(F)=h^1(F)\leq 1.\end{array}\big\}.$$
Then by Lemma \ref{ftg0}, $\widetilde{U}(d,0)\subset \md^g$.
\begin{lemma}\label{ftg1}The complement of $\widetilde{U}(d,0)$ ($V(d,0,d)$, resp.) in $\md$ ($W(d,0,d)$, resp.) is of codimension $\geq 2$.
Hence the complement of $\md^g$ ($W(d,0,d)^g$, resp.) in $\md$ ($W(d,0,d)$, resp.) is of codimension $\geq 2$.
\end{lemma}
\begin{proof}
$\md\setminus\widetilde{U}(d,0)$ is of codimension $\geq2$ is by Theorem 4.17 and Proposition 5.5 in \cite{yyfifth}.  

$V(d,0,d)$ consists of all the sheaves whose restrictions on a generic line $\p^1\cong l\in|H|$ are isomorphic to $\mo_l^{\oplus d}$.  It is enough to show that the following set $\tb$ is of codimension $\geq 2$ in $W(d,0,d)$.
\[\tb:=\big\{G\in W(d,0,d)| H^0(G\otimes\mo_l(-1))\neq 0, ~\forall~l\in |H|.\big\}\]

Let $\widehat{H}$ be the subspace of $H^0(W(d,0,d),\lambda_d(1))$ generated by all the sections induced by sheaves $\mo_l(-1)$ with $l\in |H|$. Also $\widehat{H}$ is the image of $H^0(M(1,0),\z_1^d(d))^{\vee}\cong H^0(|H|,\mo_{|H|}(d))^{\vee}$ via the strange duality map $SD_{1,(d)}$ (def. see (\ref{dsdm})). Notice that $\tb$ is the base locus of $\widehat{H}$.  

$W(d,0,d)$ is of weight zero (def. see \S 1.2 in \cite{Dr2}).  Hence by Theorem B, Theorem E and Theorem F in \cite{Dr2}, we have that $\Pic(W(d,0,d))\cong\mathbb{Z}$ generated by $\lambda_d(1)$. 
Since $\lambda_d(1)$ is effective, it is an ample generator of $\Pic(W(d,0,d))$, and hence every divisor in $|\lambda_d(1)|$ can not be a union of two subdivisors.  Hence either $\tb$ is of codimension $\geq 2$ in $W(d,0,d)$, or $dim~\widehat{H}=1$. 
By Proposition \ref{din}, $SD_{1,(d)}$ is injective and hence $dim~\widehat{H}=h^0(|H|,\mo_{|H|}(d))\geq 3$.  Hence $W(d,0,d)\setminus V(d,0,d)$ is of codimension $\geq 2$.
Hence the lemma. 
\end{proof}

Now we want to show  that $\Phi^{*}\lambda_d(r)\cong \z^r_d(r)$ on $\widetilde{U}(d,0)$ for all $r$.  We need modify Proposition \ref{ft} a bit.  We first prove some lemmas.  

Let $X$ be a projective smooth scheme of dimension $m$ with canonical line bundle $K_X$, for every element $u\in K(X)$, we can find finitely many bundles $E_1,\cdots,E_n$ such that $u=\displaystyle{\sum_{i=1}^n }k_i[E_i]$.  We define $u^{\vee}:=\displaystyle{\sum_{i=1}^n }k_i[E^{\vee}]$ where $E^{\vee}:=\mathscr{H}om(E_i,\mo_X)$, and $u^D:=
u^{\vee}\otimes K_X$. 
By Serre duality, $\chi(c\otimes u)=(-1)^m\chi(c^{\vee}\otimes u^{D})$ for every $c,u\in K(X)$.
\begin{lemma}\label{dual1}Let $X$ be a surface and $S$ be a Noetherian scheme.  $\tau:S\times X\ra X$ and $\nu:S\times S\ra S$ are the projections. 
Let $\mf$ be a sheaf over $S\times X$ which is a $S$-flat family of pure 1-dimensional sheaves of class $u$.  Then 

(1)The sheaf $\mf^{D}:=\mathscr{E}xt^1(\mf,\tau^* K_X)$ is a $S$-flat family of pure 1-dimensional sheaves of class $-u^{D}$.  

(2) For every $s\in S$, $\mf^{D}_s\cong \mathscr{E}xt^1(\mf_s,K_X)$, $Supp(\mf_s)=Supp(\mf_s^{D})$, $\chi(\mf_s)=-\chi(\mf_s^{D})$ and $\mf_s$ is (semi)stable iff so is $\mf^D_s$.

(3) $\forall~c\in K(X)$, the determinant line bundle $\lambda_{\mf}(c):=det^{-1}[R^{\bullet}\nu_*(\tau^*c~\otimes~ [\mf])]$ (see Ch 8.1 in \cite{hl}) is isomorphic to $\lambda_{\mf^{D}}(c^{\vee}):=det^{-1}[R^{\bullet}\nu_*(\tau^* c^{\vee}\otimes~[\mf^{D}])]$.
\end{lemma} 
\begin{proof}$\mf$ is of homological dimension 1 and we can have the following resolution 
\begin{equation}\label{coffe1}0\ra \mk\ra \mh \ra \mf\ra0,\end{equation}
where $\mk$ and $\mh$ are locally free.  Then we have
\begin{equation}\label{coffe2}0\ra \mh^{\vee}\otimes \tau^*K_X\xrightarrow{\alpha} \mk^{\vee}\otimes \tau^*K_X\ra\mf^D\ra0.\end{equation}
$\alpha$ is injective restricted to each fiber of $\nu$ over $s\in S$ hence by Lemma 2.1.4 in \cite{hl} $\mf^D$ is $S$-flat, and $\mf^{D}_s\cong \mathscr{E}xt^1(\mf_s,K_X)$ for all $s\in S$.  The statement on stability is easy to check.  Let $F$ be a pure 1-dimensional sheaf on $X$ and $F^D:=\mathscr{E}xt^1(F,K_X)$.  Then $\forall~F_1$ pure 1-dimensional sheaf,  $F_1\hookrightarrow F$ iff $F^D\twoheadrightarrow F_1^D$.  

For statement (3), we only need to prove it for the case $c=[E]$ with $E$ locally free.  By (\ref{coffe1}) and (\ref{coffe2}), we have
\begin{equation}\label{bread1}\lambda_{\mf}(c)=(det[R^{\bullet}\nu_*(\tau^*E~\otimes~ \mk)])\otimes (det^{-1}[R^{\bullet}\nu_*(\tau^*E~\otimes~ \mh)]);\end{equation} 
\begin{equation}\label{bread2}\lambda_{\mf^{D}}(c^{\vee})=(det^{-1}[R^{\bullet}\nu_*(\tau^*(E^{\vee}\otimes K_X)~\otimes~ \mk^{\vee})])\otimes (det[R^{\bullet}\nu_*(\tau^*(E^{\vee}\otimes K_X)~\otimes~ \mh^{\vee})]).\end{equation} 
On the other hand by Grothendieck duality (or Lemma 5.5 in \cite{abe}), for all $\me$ locally free
$$det^{\otimes (-1)^{dim~X}}[R^{\bullet}\nu_*(\tau^*(K_X)~\otimes~ \me^{\vee})]\cong det^{-1}[R^{\bullet}\nu_*\me].$$ 
Hence the statement.  The lemma is proved.
\end{proof}
\begin{coro}\label{evo1}The map $\kappa:\md\ra\md$ sending $F$ to $F^D$ is an isomorphism and for any determinant line bundle associated to $c\in K(X)$ $\kappa^{*}\lambda_{u_d}(c)\cong\lambda_{u_d}(c^{\vee})$. In particular, $\kappa^{*}\z_d^r(n)\cong\z_d^r(n)$ for all $r,n$.
\end{coro}
\begin{proof}This follows straightforward after Lemma \ref{dual1}.  Notice that $-u_d^D=u_d$ and $(c^r_n)^{\vee}=c^r_n$.
\end{proof}
\begin{rem}\label{de12}If $d=1,2$, then the map $\kappa$ in Corollary \ref{evo1} is an identity.
\end{rem}

Recall that we have the diagram 
\begin{equation}\xymatrix@C=0.01cm{
  \p^2\times |H|~~~~~\supset&\mathcal{D}\ar[d]^q\ar[r]^p
                & \p^2 \\
               &~~~~~~ |H|\cong\p^2&
               },
\end{equation}
where $\mathcal{D}$ is the universal curve in $\p^2\times |H|$.

\begin{lemma}\label{dual2}
(1) Let $F$ be a pure sheaf of class $u_d$ and $Supp(F)$ does not contain any line $l\in |H|$, then its Fourier transform $G_F=q_*(p^*(F\otimes\mo_{\p^2}(2)))\otimes \mo_{|H|}(-1)$ is locally free.  If moreover $h^1(F\otimes\mo_{\p^2}(2))=0$ 
then 
$$G_F^{\vee}\cong q_*(p^*(F^{D}\otimes\mo_{\p^2}(-1)))\otimes \mo_{|H|}(2).$$ 

(2) Let $\mf$ be a sheaf over $S\times \p^2$ which is a $S$-flat family of pure sheaves such that $h^1(\mf_s\otimes\mo_{\p^2}(2))=0$ and $Supp(\mf_s)$ does not contain any line $l\in |H|$ for all $s\in S$.  Then the Fourier transform $$\mg^{\vee}_{\mf}:=(id_S\times q)_*((id_S\times p)^*(\mf^{D}\otimes \tau^{*}\mo_{\p^2}(-1)))\otimes \tau^{*}\mo_{\p^2}(2)$$ is a flat $S$-family of locally free sheaves of class $c^d_d$. 
\end{lemma}
\begin{proof}(1) We have the following resolution on $\mathcal{D}$ by (\ref{fgen}) and the flatness of the map $p$
\begin{equation}\label{fgen2}0\ra p^*(E_F\otimes\mo_{\p^2}(-1))\xrightarrow{p^*A} p^{*}E_F\ra p^{*}F\ra 0.
\end{equation} 
$Supp(F)$ does not contain any line $l\in |H|$, hence $p^*det(A)(l)\neq0$ for every $l\in|H|$ and hence $p^*A$ is injective restricted to fibers of $q$ over all $l\in |H|$.  By Lemma 2.1.4 in \cite{hl},  $p^*(F\otimes\mo_{\p^2}(2))$ is flat over $|H|$ and hence $G_F=q_*(p^*(F\otimes\mo_{\p^2}(2)))\otimes \mo_{|H|}(-1)$ is locally free of rank $d$.  
By $h^1(F\otimes\mo_{\p^2}(2))=0$, we have $h^1((E_F\otimes\mo_{\p^2}(i))|_l)=0$ for $i=1,2$ and any $l\in |H|$.  Hence we have
\begin{equation}\label{qfgen1}0\ra q_*(p^*(E_F\otimes\mo_{\p^2}(1)))\xrightarrow{\widetilde{A}} q_*(p^*(E_F\otimes\mo_{\p^2}(2)))\ra G_F\otimes\mo_{|H|}(1)\ra 0.
\end{equation}
On the other hand we have 
\begin{equation}\label{fgen3}0\ra E_F^{\vee}\otimes\mo_{\p^2}(-3)\xrightarrow{A^T} E_F^{\vee}\otimes \mo_{\p^2}(-2)\ra F^D\ra 0,
\end{equation} 
where $A^T$ is the transform of $A$.  $h^0((E_F^{\vee}\otimes\mo_{\p^2}(-i-2))|_l)=h^1((E_F\otimes\mo_{\p^2}(i))|_l)=0$ for $i=1,2$ and any $l\in |H|$, then we have 
\begin{equation}\label{qfgen2}0\ra q_*(p^*(F^{D}\otimes\mo_{\p^2}(-1)))\ra R^1q_*(p^*(E_F^{\vee}\otimes\mo_{\p^2}(-4)))\xrightarrow{\widetilde{A^T}} q_*(p^*(E_F^{\vee}\otimes\mo_{\p^2}(-3)))\ra 0.
\end{equation}
The relative dualizing sheaf $\omega_{\mathcal{D}/|H|}$ of $q$ is $q^*\mo_{|H|}(1)\otimes p^*\mo_{\p^2}(-2)$.  By Grothendieck duality we have
\[R^1q_*(p^*(E_F^{\vee}\otimes\mo_{\p^2}(-4)))\otimes \mo_{|H|}(1)\cong (q_*(p^*(E_F\otimes\mo_{\p^2}(2))))^{\vee};\]
\[R^1q_*(p^*(E_F^{\vee}\otimes\mo_{\p^2}(-3)))\otimes \mo_{|H|}(1)\cong (q_*(p^*(E_F\otimes\mo_{\p^2}(1))))^{\vee}.\]
It is easy to see that $\widetilde{A^T}=(\widetilde{A})^T$ and (\ref{qfgen2}) can be obtained by tensoring the dual sequence of (\ref{qfgen1}) by $\mo_{|H|}(-1)$.  Hence $G_F^{\vee}\cong q_*(p^*(F^{D}\otimes\mo_{\p^2}(-1)))\otimes \mo_{|H|}(2).$

The proof of (2) is analogous to Lemma \ref{ftinf} (1).  We have finished the proof of the lemma. 
\end{proof}
\begin{lemma}\label{evo2}Let $\varsigma:W(d,0,d)\dashrightarrow W(d,0,d)$ be the birational map sending every locally free $G$ to its dual $G^{\vee}$, then $\varsigma$ induces an isomorphism outside a codimension $\geq2$ subset and $\varsigma^*\lambda_{c_d^d}(u)\cong \lambda_{c_d^d}(-u^D)$ for every $\lambda_{c_d^d}(u)$ well-defined.  In particular, 
$\varsigma^*\lambda_{d}(r)\cong \lambda_{d}(r)$ for all $d,r$.
\end{lemma}
\begin{proof}$V(d,0,d)\cong U(d,0)$.  Sheaves with non-integral supports form a subset of codimension $\geq 2$ in $U(d,0)$ by Theorem 4.17 in \cite{yyfifth}, hence non-locally free sheaves in $V(d,0,d)$ form a subset of codimension $\geq 2$ by Lemma \ref{dual2} (1).  Since $W(d,0,d)\setminus V(d,0,d)$ is of codimension $\geq 2$ by Lemma \ref{ftg1}, non-locally free sheaves in $W(d,0,d)$ form a subset of codimension $\geq 2$.

The statement on determinant line bundles can be shown by analogous argument to Lemma \ref{dual1} (3).  Hence the lemma.
\end{proof}
Now we start to modify Proposition \ref{ft}.

Let $\mf^S$ ($\mf^T$, resp.) be a $S$-flat ($T$-flat, resp.) family of sheaves in $\widetilde{U}(d,0)$ ($\widetilde{U}(r,0)$, resp.), and moreover for a generic $(s,t)\in S\times T$, the intersection of supports of $\mf^S_s$ and $\mf^T_t$ is of dimension 0.  Let $\mg^T$ be the Fourier transform of $\mf^T$.  Let $(\mf^S)^D$ be the dual of $\mf^S$ as defined in Lemma \ref{dual1} and let $(\mg^S)^{\vee}$ be the analog to Lemma \ref{dual2}.  Then by Lemma \ref{dual1} and a direct observation, $(\mf^S)^D$ is a $S$-flat family of sheaves in $\widetilde{U}(d,0)$.  By Lemma \ref{dual2}, $(\mg^S)^{\vee}$ is $S$-flat.

Denote by $\z^r_d(r)^D|_S$ ($\z^d_r(d)|_T$, resp.) the pullback of $\z^r_d(r)$ ($\z^d_r(d)$, resp.) on $\widetilde{U}(d,0)$ ($\widetilde{U}(r,0)$, resp.) via the classifying map $S\ra \widetilde{U}(d,0)$ ($T\ra \widetilde{U}(r,0)$) induced by $(\mf^S)^D$ ($\mf^T$, resp.), and denote by $\lambda_d(r)^{\vee}|_S$ ($\lambda_r(d)|_T$, resp.) the pullback of $\lambda_d(r)$ ($\lambda_r(d)$, resp.) on $W(d,0,d)^g$ ($W(r,0,r)^g$, resp.) via the classifying map $S\ra W(d,0,d)^g$ ($T\ra W(r,0,r)^g$) induced by $(\mg^S)^{\vee}$ ($\mg^T$, resp.).  Then by Corollary \ref{evo1} and Lemma \ref{evo2}, we have $\z^r_d(r)^D|_S\cong\z^r_d(r)|_S$ and $\lambda_d(r)^{\vee}|_S\cong\lambda_d(r)|_S$.

Define
$$\widehat{\rd^1}:=\big\{(s,t)\in S\times T\big| H^0((\mf^S)^D_s\otimes \mg^T_t)\neq 0\big\};$$
$$\widehat{\rd^2}:=\big\{(s,t)\in S\times T\big|H^0((\mg^S)^{\vee}_s\otimes\mf^T_t)\neq 0\big\}.$$
Then according to Theorem 2.1 in \cite{nila}, $\widehat{\rd^1}$ ($\widehat{\rd^2}$, resp.) is a divisor of line bundle $\z^r_d(r)|_S\boxtimes \lambda_r(d)|_T$ ($\lambda_d(r)|_S\boxtimes \z^d_r(d)|_T$, resp.), which induces a map $\zeta_{\widehat{\rd^1}}$ ($\zeta_{\widehat{\rd^2}}$, resp.) as follows.
\[H^0(S,\z^r_d(r)|_S)^{\vee}\xlongrightarrow{\zeta_{\widehat{\rd^1}}} H^0(T,\lambda_r(d)|_T);\]
\[H^0(S,\lambda_d(r)|_S)^{\vee}\xlongrightarrow{\zeta_{\widehat{\rd^2}}} H^0(T,\z^d_r(d)|_T).\]
\begin{prop}\label{mft}$\z^r_d(r)|_S\boxtimes \lambda_r(d)|_T\cong\lambda_d(r)|_S\boxtimes \z^d_r(d)|_T$ and $\widehat{\rd^1}=\widehat{\rd^2}$.  In particular, we have the following commutative diagram
\begin{equation}\xymatrix{H^0(S,\z^r_d(r)|_S)^{\vee}\ar[r]^{\zeta_{\widehat{\rd^1}}}\ar[d]_{\cong}&H^0(T,\lambda_r(d)|_T)\ar[d]^{\cong}\\
H^0(S,\lambda_d(r)|_S)^{\vee}\ar[r]_{\zeta_{\widehat{\rd^2}}} &H^0(T,\z^d_r(d)|_T). }
\end{equation}
\end{prop}
\begin{proof}The proof is analogous to Proposition \ref{ft}.  But we replace $\widetilde{\mg}$ by $\widetilde{\mf}$ defined as follows.
$$\widetilde{\mf}:=((id_{S\times T}\times p)^*\alpha_S^*((\mf^S)^{D}\otimes \tau_S^*\mo_{\p^2}(-1)))\otimes ((id_{S\times T}\times q)^*\alpha_T^*(\mf^T\otimes \tau_{T}^*\mo_{\p^2}(2))).$$
Then by Lemma \ref{tor} (3) and Lemma \ref{prof} we have 
\[R^i(id_{S\times T}\times p)_{*}\widetilde{\mf}=0,~\forall~i>0,~(id_{S\times T}\times p)_{*}\widetilde{\mf}\cong \alpha_S^{*}(\mf^S)^D\otimes\alpha_T^{*}\mg^T,\]
and 
\[R^i(id_{S\times T}\times q)_{*}\widetilde{\mf}=0,~\forall~i>0,~(id_{S\times T}\times q)_{*}\widetilde{\mf}\cong \alpha_S^{*}(\mg^S)^{\vee}\otimes\alpha_T^{*}\mf^T.\]
Then by following the rest argument of the proof of Proposition \ref{ft}, the proposition is proved. 
\end{proof}

\begin{proof}[Proof of Theorem \ref{App1}]By Proposition \ref{mft},  $\Phi^{*}\lambda_d(r)\cong \lambda_{d}(r)=\z^r_{d}(r),\forall ~d,r.$ 

Since both $\md$ and $W(d,0,d)$ are normal and irreducible, $\Phi_{*}\mo_{\md^g}\cong \mo_{W(d,0,d)^g}$.  Therefore
$\Phi_{*}\z_{d}^r(r)\cong \lambda_d( r),\forall~d,r$.  By Lemma \ref{ftg1} we have
\[\begin{array}{c}H^0(\md,\z^r_{dH}(r))\xrightarrow{\cong} H^0(\md^g,\z^r_{dH}(r))\xrightarrow{\cong}H^0(W(d,0,d)^g,\Phi_{*}\z_{dH}^r(r))\\
\xrightarrow{\cong} H^0(W(d,0,d)^g,\lambda_d(H^{\otimes r}))\xrightarrow{\cong} H^0(W(d,0,d),\lambda_d(H^{\otimes r}))\end{array}.\]
The theorem is proved.
\end{proof}

Yao YUAN \\
Yau Mathematical Sciences Center, Tsinghua University, \\
Beijing 100084, China\\
E-mail: yyuan@mail.tsinghua.edu.cn; yyuan@math.tsinghua.edu.cn.

\end{document}